\newcommand{\comment}[1]{}
\newtheorem{lem}{Lemma}
\newtheorem{propn}{Proposition}
\newtheorem{cor}{Corollary}
\newtheorem{thm}{Theorem}
\newtheorem*{thm0}{Theorem 0}
\theoremstyle{remark}
\theoremstyle{definition}
\newcommand{\R}{\mathbb R}
\newcommand{\Z}{\mathbb Z}
\newcommand{\N}{\mathbb N}
\newcommand{\F}{\mathcal F}
\newcommand{\FF}{\mathcal{F}}
\newcommand{\Tr}{\Delta}
\newcommand{\de}{\delta}
\newcommand{\si}{\sigma}
\newcommand{\om}{\omega}
\newcommand{\la}{\lambda}
\newcommand{\lm}{\lambda}
\newcommand{\subs}{\subseteq}
\newcommand{\uy}{\underline{y}}
\newcommand{\uz}{\underline{z}}
\newcommand{\be}{\begin{equation}}
\newcommand{\ee}{\end{equation}}
\newcommand{\eq}{\begin{equation}}
\newcommand{\bee}{\begin{equation*}}
\newcommand{\eee}{\end{equation*}}
\newcommand{\ls}{\lesssim}
\begin{document}
\title{Multilinear maximal operators associated to simplices}
\author{Brian Cook\quad\quad Neil Lyall \quad\quad \'{A}kos Magyar}
\thanks{The second and third authors were partially supported by grants NSF-DMS 1702411 and NSF-DMS 1600840, respectively.}

\address{Department of Mathematics,Virginia Tech, Blacksburg, VA 24061, USA}
\email{briancookmath@gmail.com}
\address{Department of Mathematics, The University of Georgia, Athens, GA 30602, USA}
\email{lyall@math.uga.edu}
\address{Department of Mathematics, The University of Georgia, Athens, GA 30602, USA}
\email{magyar@math.uga.edu}


\begin{abstract} 
We establish $L^{p_1}\times\cdots\times L^{p_k}\to L^r$ and $\ell^{p_1}\times\cdots\times \ell^{p_k}\to \ell^r$ type bounds for multilinear maximal operators associated to averages over isometric copies of a given non-degenerate $k$-simplex in both the continuous and discrete settings. These provide natural extensions of $L^p\to L^p$ and $\ell^p\to \ell^p$ bounds for Stein's spherical maximal operator and the discrete spherical maximal operator, with each of these results serving as a key ingredient of the respective proofs.
 \end{abstract}
\maketitle

\setlength{\parskip}{3.5pt}

\section{Introduction} 

\subsection{The spherical maximal operator}
 Let $d\geq 3$ and $\la>0$. For $f:\R^d\to\R$ define the averages
\bee
\mathcal{A}_\la f(x)= \int_{S^{d-1}} f(x+\la y)\,d\si(y)
\eee
and the maximal operator
\bee
\mathcal{A}_\ast f(x)= \sup_{\la>0} |A_\la f(x)|\eee
where $\si$ denotes the normalized surface area measure on the unit sphere $S^{d-1}=\{x\in\R^d:\ |x|=1\}$.

Stein's spherical maximal function theorem \cite{St}, states that for $p>d/(d-1)$ one has the estimate
\be\label{2.3'}
\|\mathcal{A}_\ast f\|_p \leq C_{p,d}\,\|f\|_p
\ee
where by $\|f\|_p$ denotes the $L^p(\R^d)$ norm of the function $f$. Note that Bourgain \cite{B} extended the above result for $d=2$, and that the condition  $p>d/(d-1)$ is sharp.

\subsection{The discrete spherical maximal operator}

The study of discrete analogues of central constructs of Euclidean harmonic analysis, initiated by Bourgain \cite{B1,B2,B3}, has grown into a vast, active area of research. An important result in this development is the $\ell^p$-boundedness of the so-called discrete spherical maximal operator \cite{MSW},
we now recall this operator and the main result of \cite{MSW}. 

Let $d\geq 5,\,\la^2\in\N$, and $N_\la :=|\{y\in\Z^d:\ |y|=\la\}|.$ It is well-known, see for example \cite{V}, that \[c_d\la^{d-2}\leq N_\la \leq C_d\la^{d-2}\] for some constants $0<c_d<C_d$. 
For $f:\Z^d\to\R$ define the averages
\bee
A_\la f(x)= N_\la^{-1} \sum_{|y|=\la} f(x+y)
\eee
and the maximal operator
\bee
A_\ast f(x)= \sup_{\la} |A_\la f(x)|.\eee

The variables $x,y$ in the two equations above, and throughout this short note whenever we are considering discrete operators, are always assumed to in $\Z^d$, unless explicitly specified otherwise. Furthermore, in the discrete setting the parameter $\la$ will always be assumed be in $\sqrt{\N}$, that is satisfy $\la^2\in\N$. 

In \cite{MSW} it was shown that for $p>d/(d-2)$ one has the estimate
\eq\label{2.3}
\|A_\ast f\|_p \leq C_{p,d}\,\|f\|_p
\ee
where $\|f\|_p$ 
denotes the $\ell^p(\Z^d)$ norm of the function $f$.
It was further noted in \cite{MSW} that the condition that $d\geq 5$ and $p>d/(d-2)$ are both sharp.

\section{Multilinear maximal operators associated to simplices}

The aim of this short note is to show that estimates (\ref{2.3'}) and (\ref{2.3})  imply $L^{p_1}\times\cdots\times L^{p_k}\to L^r$ and $\ell^{p_1}\times\cdots\times \ell^{p_k}\to \ell^r$ type bounds for certain, seemingly more singular, multilinear maximal operators associated to averages over similar copies of a given non-degenerate simplex in the continuous and discrete settings, respectively.

\subsection{Multilinear maximal operators associated to simplices in $\mathbb{R}^d$}\

Let $k\in\N$ and let $\Tr=\{v_0=0,v_1,\ldots,v_k\}\subs\R^d$ be a non-degenerate $k$-simplex, that is assume that the vectors $v_1,\ldots,v_k$ are linearly independent. Given $\la>0$ we say that a simplex $\Tr'=\{y_0=0,y_1,\ldots,y_k\}\subs \R^d$ is \emph{isometric} to $\Tr$ if $|y_i-y_j|=\la |v_i-v_j|$ for all $0\leq i,j\leq k$. 
We will write $\Tr'\simeq \la\Tr$ in this case. 

For a family of functions $f_1,\dots,f_k:\R^d\to\R$ with $d\geq k+1$ and $\la>0$ we define the multilinear averages
\be
\mathcal{A}_{\lm}(f_1,\dots,f_{k})(x)= \int_{SO(d)} f_1(x+\lm \cdot U(v_1))\cdots f_k(x+\lm\cdot U(v_k))\,d\mu(U)
\ee
where $\mu$ denotes the Haar measure on $SO(d)$ and the associated maximal operator
\eq
\mathcal{A}_\ast(f_1,\ldots,f_k)(x) = \sup_{\la>0} |\mathcal{A}_\la(f_1,\ldots,f_k)(x)|.
\ee

We quickly record the following trivial observation, which essentially appears in both \cite{HLM} and \cite{PS}.
\begin{thm0}
Let $k\in\N$ and  $\Tr=\{v_0=0,v_1,\ldots,v_k\}\subs\R^d$ be a non-degenerate $k$-simplex.

If $d\geq k+1$ and $1/r=1/p_1+\cdots+1/p_k<(d-1)/d$, then
\bee
\|\mathcal{A}_*(f_1,\dots,f_k)\|_r\leq C_{d,k,\Delta}\,\|f_1\|_{p_1}\cdots\|f_k\|_{p_k.}
\eee
\end{thm0}
\begin{proof}
For each $\lm>0$ we have 
\[
|\mathcal{A}_{\lm}(f_1,\dots,f_{k})(x)|\leq \|f_2\|_\infty\cdots\|f_k\|_\infty\,\int |f_{1}(x+\lm y_1)|\,d\sigma(y_{1})
\]
where $\sigma$ denotes the normalized measure on the sphere $S^{d-1}(0,|v_1|)=\{y\in\R^d\,:\, |y|=|v_1|\}$. It therefore follows from (\ref{2.3'}) that $\mathcal{A}_*$ is bounded on $L^p\times L^\infty\times\cdots\times L^\infty\to L^p$, whenever $p>d/(d-1)$ and $d\geq k+1$.

Theorem 0 now follows by symmetry and interpolation.
\end{proof}

It is straightforward to verify, following the ideas in Section 6 of \cite{PS}, that a necessary condition for $\mathcal{A}_*$ to be bounded on $L^{p_1}\times \cdots\times L^{p_k}\to L^r$ is that $1/r=1/p_1+\cdots+1/p_k$ with  $p_1,\dots,p_k>d/(d-1)$. It is therefore of interest to obtain estimates outside of the ``trivial region" given by Theorem 0, namely for $1/p_1+\cdots+1/p_k\geq (d-1)/d$. Theorems \ref{Thm3} and \ref{Thm3'}, in Section \ref{cont}, establishes precisely this in sufficiently high dimensions. We establish a convex region of points $(1/p_1,\ldots,1/p_k)$ for which $\mathcal{A}_*$ is bounded that in particular contains the cube $q^{-(k-1)}\cdot (d-1)/d\cdot[0,1)^k$ with $q=m/m-1$, in dimensions $d>2km+2$. In particular, when $k=2$, we establish that $\mathcal{A}_*$ is bounded on $L^{p_1}\times L^{p_2}\to L^r$, whenever $1/r=1/p_1+1/p_2$ with $p_1,p_2>m/(m-1)\cdot d/(d-1)$ and $d\geq 2m$, see Figure \ref{fig1}.

\subsection{Multilinear maximal operators associated to simplices in $\mathbb{Z}^d$}\

Let $k\in\N$ and let $\Tr=\{v_0=0,v_1,\ldots,v_k\}\subs\Z^d$ be a non-degenerate $k$-simplex. Given $\la\in\sqrt{\N}$ we say that a simplex $\Tr'=\{y_0=0,y_1,\ldots,y_k\}\subs \Z^d$ is \emph{isometric} to $\Tr$ if $|y_i-y_j|=\la |v_i-v_j|$ for all $0\leq i,j\leq k$. 
We will again write $\Tr'\simeq \la\Tr$ in this case and now denote by $N_{\la\Tr}$ the number of isometric copies of $\la\Tr$, namely 
\[N_{\la\Tr} := |\{(y_1,\ldots,y_k)\in \Z^{dk}:\ \Tr'=\{0,y_1,\ldots,y_k\}\simeq \la\Tr\}|.\]
Note that for $k=1$ and $v_1=(1,0,\ldots,0)$ we have that $N_{\la\Tr}=N_\la$.

Given any simplex $\Tr=\{v_0=0,v_1,\ldots,v_k\}\subseteq\R^d$, we introduce the associated \emph{inner product matrix} $T=T_\Tr= (t_{ij})_{1\leq i,j\leq k}$ with entries $t_{ij}:= v_i\cdot v_j$, where ``$\cdot$"
stands for the dot product in $\R^d$. Note that $T$ is a positive semi-definite matrix with integer entries and $T$ is positive definite if and only if $\Tr$ is non-degenerate. 
It is easy to see that $\Tr'\simeq \la\Tr$, with $\Tr'=\{y_0=0,y_1,\ldots,y_k\}$, if and only if 
\eq\label{2.4}
y_i\cdot y_j=\la^2 t_{ij}\quad \text{for all}\quad 1\leq i,j\leq k.
\ee

Extending the work of Siegel \cite{S} and Raghavan \cite{R}, Kitaoke \cite{K} has proved that if $\Tr$ is non-degenerate, then one has the estimate
\eq\label{2.5}
c_{d,k}\det\,(\la^2 T)^{(d-k-1)/2} \leq N_{\la\Tr} \leq C_{d,k}\det\,(\la^2 T)^{(d-k-1)/2}
\ee
in dimensions $d\geq2k+3$ for $\la\geq \la_{d,k,\Tr}$. It is important to note that the constants $0<c_{d,k}<C_{d,k}$ depending only on the parameters $d$ and $k$ and are independent of the matrix $T$ and hence the simplex $\Tr$. For a self contained treatment of the upper bound in \eqref{2.5}, see Lemma 2.2 in \cite{M}. In particular for sufficiently large $\la$ one has that $N_{\la\Tr}>0$, in fact $N_{\la\Tr}\asymp \la^{kd-k(k+1)}$ with implicit constants may depending on $\Tr$. 

\smallskip

For a family of functions $f_1,\dots,f_k:\Z^d\to\R$ and $\la\in\sqrt{\N}$ such that $N_{\la\Tr}>0$ we define the multilinear averages
\eq\label{2.6}
A_\la(f_1,\ldots,f_k)(x) = N_{\la\Tr}^{-1} \sum_{y_1,\ldots,y_k} f_1(x+y_1)\cdots f_k(x+y_k)\,S_{\la^2 T} (y_1,\ldots,y_k)
\ee
where $S_{\la^2 T}(y_1,\ldots,y_k)=1$ if $y_1,\ldots,y_k\in\Z^d$ satisfies \eqref{2.4} and is equal to 0 otherwise, i.e. the indicator function of the relation $\Tr'\simeq \la\Tr$,
and the associated maximal operator
\be\label{2.7}
A_\ast(f_1,\ldots,f_k)(x) = \sup_{\la} |A_\la(f_1,\ldots,f_k)(x)|
\ee
where the supremum is restricted to those $\la\in\sqrt{\N}$ for which $N_{\la\Tr}>0$.

We remark that there is no direct analogue of Theorem 0 in the discrete setting. This difficulty arises from the fact that for fixed $y_1$, we do not necesssailly have control over the count
\[\sum_{y_2,\ldots,y_{k}} S_{\la^2 T} (y_1,\ldots,y_k).
\]
Our results do however  rely on leveraging the fact that the above sum is well behaved on average.

\section{Main results for our Discrete Multilinear Maximal Operators}

In the discrete setting  we choose to present our results in an increasing order of generality, first presenting the following special case of our most general result in the special case of discrete bilinear maximal operators associated to triangles.

\begin{thm}\label{Thm0} Let $\Tr=\{v_0=0,v_1,v_2\}\subs\Z^d$ be a non-degenerate triangle. 

\begin{itemize}
\item[(i)] If $d\geq9$, $r>2d/(d-2)$, and $1\leq p_1,p_2\leq\infty$ with $1/r=1/p_1+1/p_2$, then one has the estimate
\be\label{2.8}
\|A_\ast(f_1,f_2)\|_r \leq C_{d,\Tr}\, \|f_1\|_{p_1}\|f_2\|_{p_2}.
\ee
\smallskip
\item[(ii)]
If $d\geq11$, then for any $r>d/(d-2)$ and $p_1,p_2>2d/(d-2)$ that satisfies $1/r=1/p_1+1/p_2$,
one has
\bee
\|A_\ast(f_1,f_2)\|_r \leq C_{d,\Tr}\, \|f_1\|_{p_1} \|f_2\|_{p_2}.
\eee
\end{itemize}
\end{thm}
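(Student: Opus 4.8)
The plan is to reduce the bilinear maximal estimate to the linear discrete spherical maximal theorem \eqref{2.3} by ``splitting'' the relation $S_{\la^2 T}(y_1,y_2)$ into a sphere constraint on $y_1$ and, conditionally on $y_1$, a constraint on $y_2$ that lies on a sphere (of radius depending on $\la$ and $y_1$) inside the hyperplane through the origin perpendicular to $y_1$. Concretely, writing $T=(t_{ij})$, the conditions are $|y_1|^2=\la^2 t_{11}$, $y_1\cdot y_2=\la^2 t_{12}$, and $|y_2|^2=\la^2 t_{22}$. Given $y_1$ on the first sphere, the last two conditions say that $y_2$ lies on an affine sphere of codimension $2$; the number of such $y_2$ is, by \eqref{2.5} applied on an appropriate sublattice (or by the main counting lemma of \cite{M}), comparable to $\la^{d-3}$ \emph{on average over $y_1$}, though not pointwise — this is exactly the subtlety flagged in the remark after \eqref{2.7}. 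So I would first establish the building-block estimate
\[
\Bigl\|\sup_{\la}\;N_{\la\Tr}^{-1}\sum_{y_1,y_2} f_1(x+y_1)f_2(x+y_2)\,S_{\la^2 T}(y_1,y_2)\Bigr\|_r \ls \|f_1\|_{p_1}\|f_2\|_{p_2}
\]
in the two stated ranges, by a Hölder/duality decomposition in the $y_2$ variable followed by \eqref{2.3} in the $y_1$ variable.

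For part (i) I would take $f_2\in\ell^\infty$ as the extreme case: bound $|f_2(x+y_2)|\le\|f_2\|_\infty$, and then control the remaining inner sum $N_{\la\Tr}^{-1}\sum_{y_2}S_{\la^2 T}(y_1,y_2)$. This inner sum is not bounded uniformly in $y_1$, but I would dominate the maximal operator by first summing in $y_2$ for each fixed $\la$ and $y_1$, getting a weight $w_\la(y_1):=N_{\la\Tr}^{-1}\sum_{y_2}S_{\la^2 T}(y_1,y_2)$ supported on the sphere $|y_1|^2=\la^2 t_{11}$, with $\sum_{y_1}w_\la(y_1)=1$. The point is that $w_\la(y_1)\ls \la^{-2}\cdot(\text{local density factor})$, and the bad set where the density is large is small in an $\ell^s$ sense for a suitable $s$; quantifying this — an $\ell^\infty\to$ (bound on $\|w_\la\|_{\ell^{s'}}$) statement uniform in $\la$ — is the technical heart. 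Given such a bound, Hölder in $y_1$ plus the discrete spherical maximal theorem \eqref{2.3} for the exponent $d/(d-2)$ (which forces $d\ge5$, and the loss from $s$ pushes this to $d\ge9$) yields boundedness $\ell^{p}\times\ell^\infty\to\ell^{r}$ for $r>2d/(d-2)$; then symmetry and multilinear interpolation between this endpoint and the trivial $\ell^1\times\ell^1\to\ell^1$-type bound give the full line $1/r=1/p_1+1/p_2$. For part (ii), instead of sending $f_2$ to $\ell^\infty$ I would interpolate the part-(i) endpoint against a second endpoint where the roles are more balanced: using Hölder with a finite exponent on $f_2$ (costing an $\ell^{q}$ norm of $w_\la$, hence a better power of $\la$ but requiring the inner count to be controlled in a stronger norm, which costs two more dimensions, $d\ge11$), one gets $\ell^{p_1}\times\ell^{p_2}\to\ell^r$ with $p_1,p_2>2d/(d-2)$ and $r>d/(d-2)$.

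The main obstacle, as the authors themselves signal, is precisely the lack of pointwise control on $\sum_{y_2}S_{\la^2 T}(y_1,y_2)$: one must show this conditional count is well-behaved \emph{on average}, uniformly in $\la$, and package that averaged control into an $\ell^{s}$ (or $\ell^q$) bound on the induced weight $w_\la$ that is then fed into \eqref{2.3}. I expect this to rest on the Kitaoka-type asymptotics \eqref{2.5} applied to the rank-one-smaller problem — counting $y_2$ with prescribed inner products against the fixed vector $y_1$ is itself a quadratic form / representation count on a lattice coset — together with a dyadic pigeonholing over the size of the local factors. Once the weight bound is in hand, the rest (Hölder, invoking \eqref{2.3}, symmetrization, and interpolation to fill out the segment $1/r=1/p_1+1/p_2$) is routine, and the dimensional thresholds $d\ge9$ and $d\ge11$ will emerge from adding the loss in the weight estimate to the $d\ge5$ needed for \eqref{2.3} (and the requirement $d\ge 2k+3=7$ for \eqref{2.5} itself).
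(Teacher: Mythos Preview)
Your overall architecture --- reduce the bilinear maximal operator to the linear spherical maximal theorem \eqref{2.3} via a weight on one of the variables, then finish by symmetry and interpolation --- is exactly the paper's framework. The gap is in the technical heart: how the weight is actually controlled.

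You propose to bound $\|w_\la\|_{\ell^{s'}}$ by analyzing the \emph{rank-one-smaller} conditional count $\sum_{y_2}S_{\la^2 T}(y_1,y_2)$ for fixed $y_1$ (Kitaoka on a lattice coset, dyadic pigeonholing on local factors). This is both vague and harder than what is needed; there is no uniform asymptotic for that conditional count, and sorting $y_1$'s by local densities is delicate. The paper sidesteps this entirely: it fixes $s'=2$ (Cauchy--Schwarz), so that the weight bound becomes
\[
\sum_{y_2}\Bigl(\sum_{y_1}S_{\la^2 T}(y_1,y_2)\Bigr)^2
=\sum_{y_1,y_1',y_2}S_{\la^2 T}(y_1,y_2)\,S_{\la^2 T}(y_1',y_2),
\]
which is the number of solutions to an \emph{enlarged} system in the three vectors $y_1,y_1',y_2$. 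After parametrizing the one free inner product $y_1\cdot y_1'$, this is estimated directly by the upper bound \eqref{3.2'} for $(2k-1)$-tuples; no understanding of the conditional count is required. The threshold $d\ge 9$ is precisely $d\ge 2(2k-1)+3$ with $k=2$, i.e.\ the dimension needed for \eqref{3.2'} on the three-vector system --- it does not arise from ``adding losses to $d\ge 5$'' as you suggest.

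For part (ii) the paper's mechanism is more than a rebalanced interpolation: one first applies Cauchy--Schwarz in $y_1$ (giving the pointwise bound $A_\la(f_1,f_2)^2\le A_*(f_1^2)\cdot B_\la(f_2,f_2)$), and then a \emph{second} Cauchy--Schwarz to $B_\la$ reduces to $\sum_{y_2,y_2'}W_{\la^2 T}(y_2,y_2')^2$, which is a four-vector count $(y_1,y_1',y_2,y_2')$; again \eqref{3.2'} applies, now needing $d\ge 2(2k)+3=11$. The upshot is a pointwise inequality $A_*(f_1,f_2)\le C\,A_*(f_1^2)^{1/2}A_*(f_2^2)^{1/2}$, from which the $p_1,p_2>2d/(d-2)$ range follows immediately via \eqref{2.3}. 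Your sketch for (ii) does not capture this second squaring step and would not by itself explain the jump from $d\ge 9$ to $d\ge 11$.
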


For a visualization of those $p_1$ and $p_2$ for which Theorem \ref{Thm0} gives boundedness for these discrete bilinear maximal operators, see Figure \ref{fig3} (with $m=2$).

Note that if we know that  $A_\ast$ is bounded on $\ell^{p_{1}}\times\ell^{p_{2}}\to\ell^{r}$, then we automatically get all bounds 
$\ell^{q_{1}}\times\ell^{q_{2}}\to\ell^{s}$ for all $q_1\leq p_{1}$, all $q_2\leq p_{2}$, and $s\geq r$ due to the nested properties of the discrete norms.

Furthermore, note that in Theorem \ref{Thm0} above, and in all subsequent theorem and propositions in this paper (except for Theorem \ref{Thm2}), part (ii) implies part (i) for the range of dimensions in which part (ii) holds.

\smallskip

We remark that it was independently and simultaneously established by Anderson, Kumchev and Palsson in \cite{AKP} that in dimensions $d\geq 9$, with $\Tr$ being a equilateral triangle, that estimate \eqref{2.8} holds in the larger range $r>\max\{32/(d+8),(d+4)/(d-2)\}$. 
Their result follows as a direct corollary of $\ell^p\times\ell^\infty\to\ell^p$ bounds obtained by employing very different methods than those contained in this short note. 

Our proof of (i) above also follows from  $\ell^p\times\ell^\infty\to\ell^p$ estimates. In  Section \ref{last} we discuss a generalization of our method that allows us to obtain better  bounds in larger dimensions. In particular, we  obtain $\ell^{p_{1}}\times\ell^{p_{2}}\to\ell^{r}$ bounds whenever $r>m/(m-1)\,\cdot\,d/(d-2)$ and $1\leq p_1,p_2\leq\infty$ with $1/r\leq1/p_1+1/p_2$, provided $d\geq2m+5$. This represents an improvement on the results in \cite{AKP} for $d\geq15$. See Theorem \ref{Thm2}, with $k=2$, and Figure \ref{fig3}.

We remark that our proof of (ii) above, which we emphasize
gives non-trivial estimates for a range of $p_1$ and $p_2$ for any given $r>d/(d-2)$, provided $d\geq11$, does not follow as a corollary of $\ell^p\times\ell^\infty\to\ell^p$ estimates.

\medskip

Before stating our next result, Theorem \ref{Thm1} below, which generalizes Theorem \ref{Thm0} to multilinear maximal operators associated to $k$-simplices, we define for each integer $k\geq 2$, a symmetric convex region $\mathcal{C}_k\subseteq[0,1]^k$. We define $\mathcal{C}_k$ to be all those points $(x_1,\dots,x_k)\in[0,1]^k$ with $x_1+\cdots+x_k<1$ that also have the property that for any $1\leq j\leq k-1$ one has $y_1+\cdots+y_j<1-2^{-j}$ for any choice  $\{y_1,\dots,y_j\}\subset\{x_1,\dots,x_k\}$. 

We note, in particular, that if $(x_1,\dots,x_k)\in \mathcal{C}_k$, then $0\leq x_1,\dots,x_k<1/2$, and that both the points $(1/k,\dots,1/k)$ and $(1/2,0,\dots,0)$, while not in $\mathcal{C}_k$, are contained in the boundary of $\mathcal{C}_k$. See Figure \ref{fig5} below.

\begin{figure}[h]
\tdplotsetmaincoords{80}{110}
\begin{tikzpicture}[tdplot_main_coords]
\draw[->]  (0,0,0) -- (7.5,0,0) node[anchor=north]{$x_1$};
\draw
[->] (0,0,0) -- (0,4.5,0) node[anchor=west]{$x_2$};
\draw[->] (0,0,0) -- (0,0,4.5) node[anchor=east]{$x_3$};

\draw[fill=gray!15, fill opacity=0.8, style = dotted] (3,0,0) -- (3,1,0)  -- (3,1,1) -- (3,0,1) -- cycle;
\draw[fill=gray!20, fill opacity=0.8, style = dotted]  (3,1,0) -- (3,1,1) -- (1.8,1.7,1) --  (1.8,1.7,0) -- cycle;
\draw[fill=gray!25, fill opacity=0.8, style = dotted] (1.8,1.7,1)  --  (1.8,1.7,0) -- (0,1.7,0) -- (0,1.7,1)  -- cycle;
\draw[fill=gray!10, fill opacity=0.8, style = dotted] (3,1,1)  -- (1.7,1,1.8) -- (1.7,0,1.8) -- (3,0,1) -- cycle;
\draw[fill=gray!15, fill opacity=0.8, style = dotted] (1.8,1.7,1) -- (3,1,1) -- (1.7,1,1.8)-- cycle;
\draw[fill=gray!15, fill opacity=0.8, style = dotted] (1.7,1,1.8) -- (1.8,1.7,1) -- (0,1.7,1) -- (0,1,1.8) -- cycle;

\draw[fill=gray!10, fill opacity=0.8, style = dotted] (0,0,1.8) -- (0,1,1.8) -- (1.7,1,1.8) --(1.7,0,1.8) -- cycle;

\node [draw, circle, minimum width=4pt, inner sep=0pt] at (6,0,0) {};
\node [draw, color=red,circle, minimum width=4pt, inner sep=0pt] at (3,0,0) {};

\node[above left =2pt of {(6,0,0)}, outer sep=2pt] {\small$(1,0,0)$};
\node [draw, color=red, circle, minimum width=4pt, inner sep=0pt] at (3,1,0) {};
\node [draw, color=red, color=red, circle, minimum width=4pt, inner sep=0pt] at (3,1,1) {};
\node[color=red, below left=0pt of {(3.1,1.1,1.1)}, outer sep=0pt] {\small$\vec{x}_1$};

\node [draw, color=red, circle, minimum width=4pt, inner sep=0pt] at (1.8,1.7,0) {};

\node [draw, color=red, circle, minimum width=4pt, inner sep=0pt] at (3,0,1) {};

\node [draw, color=red, circle, minimum width=4pt, inner sep=0pt] at (1.8,1.7,1) {};
\node[color=red, below right=0pt of {(1.8,1.7,1.1)}, outer sep=0pt] {\small$\vec{x}_2$};

\node [draw, color=red, circle, minimum width=4pt, inner sep=0pt] at (1.7,0,1.8) {};

\node [draw,  color=red, circle, minimum width=4pt, inner sep=0pt] at (0,0,1.8){};

\node [draw, circle, minimum width=4pt, inner sep=0pt] at (0,0,3.75) {};
\node[above right=2pt of {(0,0,3.75)}, outer sep=2pt] {\small$(0,0,1)$};

\node [draw, color=red, circle, minimum width=4pt, inner sep=0pt] at (1.7,1,1.8) {};
\node[color=red, above=0pt of {(1.7,1,1.8)}, outer sep=0pt] {\small$\vec{x}_3$};

\node [draw, color=red, circle, minimum width=4pt, inner sep=0pt] at (0,1.7,1) {};
\node [draw, color=red, circle, minimum width=4pt, inner sep=0pt] at (0,1,1.8) {};

\node [draw,  circle, minimum width=4pt, inner sep=0pt] at (0,3.7,0) {};

\node [draw, color=red, circle, minimum width=4pt, inner sep=0pt] at (0,1.7,0) {};
\node[above right =2pt of {(0,3.7,0)}, outer sep=4pt] {\small$(0,1,0)$};

\draw[color=black, style=dashed] (6,0,0) -- (0,0,3.75) -- (0,3.7,0) -- cycle;

\comment{
\draw[color=black, style=dashed] (6,4,0) -- (6,4,3.9);
\draw[color=black, style=dashed] (6,0,0) -- (6,4,0) -- (0,3.7,0);
\draw[color=black, style=dashed] (6,0,0) -- (6,0,3.8) -- (0,0,3.75);
\draw[color=black, style=dashed] (0,0,3.75) -- (0,3.7,3.9) -- (0,3.7,0);
\draw[color=black, style=dashed] (6,0,0) -- (6,4,0) -- (0,3.7,0);
\draw[color=black, style=dashed] (6,0,3.8) -- (6,4,3.9) -- (0,3.7,3.9);
}

\end{tikzpicture}
\caption{Illustration of $\mathcal{C}_3$ where $\vec{x}_1=(1/2, 1/4, 1/4)$, $\vec{x}_2=( 1/4, 1/2,1/4)$, and $\vec{x}_3=( 1/4, 1/4,1/2)$.
}\label{fig5}
\end{figure}
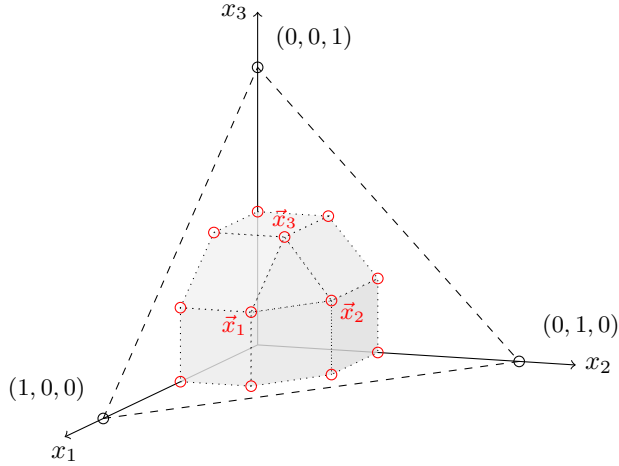

\smallskip

\begin{thm}\label{Thm1} Let $k\in\N$ and $\Tr=\{v_0=0,v_1,\ldots,v_k\}\subs\Z^d$ be a non-degenerate $k$-simplex. 


\begin{itemize}
\item[(i)] If $d\geq4k+1$, $r>2d/(d-2)$, and $1\leq p_1,\dots,p_k\leq\infty$ with $1/r=1/p_1+\cdots+1/p_k$, then
one has 
\bee
\|A_\ast(f_1,\ldots,f_k)\|_r \leq C_{d,k,\Tr}\, \|f_1\|_{p_1}\cdots \|f_k\|_{p_k}.
\eee
\smallskip
\item[(ii)]
If $d\geq4k+3$, then for any $r>d/(d-2)$ and $p_1,\dots,p_k>2d/(d-2)$ whose reciprocals \[(1/p_1,\dots,1/p_k)\in (d-2)/d\cdot\mathcal{C}_k\] and satisfy $1/r=1/p_1+\cdots+1/p_k$, 
one has the estimate
\bee
\|A_\ast(f_1,\ldots,f_k)\|_r \leq C_{d,k,\Tr}\, \|f_1\|_{p_1}\cdots \|f_k\|_{p_k}.
\eee
\end{itemize}
\end{thm}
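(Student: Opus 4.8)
The plan is to reduce everything to the linear discrete spherical maximal bound \eqref{2.3} via two devices: a Fourier-analytic factorization of the multilinear average, and an interpolation argument between the "trivial" endpoint bounds and a bootstrapped near-$L^2$ bound. First I would record the basic $\ell^\infty$ estimate: since the kernel $S_{\la^2 T}$ is nonnegative and, for fixed $(y_1,\dots,y_{j-1},y_{j+1},\dots,y_k)$, summing in the remaining variable $y_j$ is controlled (on average over $\la$, and crudely $N_{\la\Tr}^{-1}\sum_{y_j} S_{\la^2 T}\ls \la^{d-2}/N_{\la\Tr}\cdot(\text{count of the }(k-1)\text{-subsimplex})$), one gets by Kitaoka's asymptotics \eqref{2.5} that
\[
|A_\la(f_1,\dots,f_k)(x)|\ls \Bigl(\prod_{i\neq 1}\|f_i\|_\infty\Bigr)\, \widetilde A_\la f_1(x),
\]
where $\widetilde A_\la$ is (comparable to) the linear discrete spherical average of radius $\la|v_1|$. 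Taking the sup in $\la$ and invoking \eqref{2.3} gives $\ell^p\times\ell^\infty\times\cdots\times\ell^\infty\to\ell^p$ for $p>d/(d-2)$; symmetrizing and multilinearly interpolating these $k$ estimates yields part (i) in the stated dimension range $d\ge 4k+1$ once one checks that the interpolation of the sup-norm slots costs only the harmless $r>2d/(d-2)$ threshold (the factor $2$ coming from splitting a single $L^{p}$ into two $L^{2p}$'s, roughly). Actually the cleanest route to (i) is: prove $\ell^{p}\times\ell^\infty\times\cdots\to\ell^p$, then observe by Hölder that the multilinear operator is bounded $\ell^{r}\times\cdots\times\ell^r\to\ell^{r/k}$ follows by putting all functions in $\ell^{kr'}$-type spaces; the numerology $d\ge 4k+1$ should drop out of requiring all the linear exponents appearing to exceed $d/(d-2)$.

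For part (ii) the key new input is a genuine $\ell^2$-type estimate, and here the Fourier transform on $\Z^d$ (equivalently, on $\T^d$) is essential. Writing $\widehat{S_{\la^2 T}}$ as a multidimensional exponential sum in $k$ torus variables, one wants decay of the form $|\widehat{S_{\la^2 T}}(\xi_1,\dots,\xi_k) - (\text{main term})|\ls N_{\la\Tr}\cdot\la^{-\delta}$ after subtracting the expected continuous-plus-arithmetic main term; this is exactly the kind of circle-method estimate underlying \cite{MSW} and \cite{M}, valid once $d\ge 2k+3$ with room to spare, and it lets one compare $A_\la$ to a product of continuous spherical averages convolved with a slowly-varying arithmetic factor. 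Combined with the continuous bound (Theorem 0, or rather its $L^2\times L^\infty\times\cdots$ instance) and a standard $\ell^2\to\ell^2$ maximal bootstrap (square-function / $g$-function argument summing the dyadic pieces $\sup_{\la\sim 2^j}$), this produces, for each fixed slot, an estimate of the form $\ell^2\times\ell^\infty\times\cdots\to\ell^{2}$ with an $\eps$-loss, equivalently $\ell^{2+\eps}$ bounds. One then interpolates the full family: the $k$ permuted copies of "one slot at $2+\eps$, the rest at $\infty$" together with the $k$ permuted copies of "one slot at $d/(d-2)+\eps$, the rest at $\infty$" from part (i). The convex hull of the corresponding reciprocal vectors is precisely engineered to be $(d-2)/d\cdot\mathcal C_k$: the constraint "$y_1+\cdots+y_j<1-2^{-j}$ for every $j$-subset" is the statement that after merging any $j$ of the slots via Hölder one still lands strictly inside the $\ell^2$-range, with the $2^{-j}$ accounting for the repeated halving incurred each time two slots are combined. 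I would verify this hull computation carefully, as it is where the exact shape of $\mathcal C_k$ is forced.

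The main obstacle, as the authors themselves flag in the remark preceding Section 3, is the absence of pointwise control of $\sum_{y_2,\dots,y_k} S_{\la^2 T}(y_1,\dots,y_k)$ for fixed $y_1$ — so the naive "freeze all but one variable" reduction used for (i) is lossy and cannot by itself reach below $2d/(d-2)$ in an individual slot. Getting (ii) therefore genuinely requires the averaged/Fourier control: one must exploit that $\sum_{y_2,\dots,y_k} S_{\la^2 T}(y_1,\dots,y_k)$ behaves like the count for a $(k-1)$-simplex \emph{on average over admissible $y_1$ and over $\la$}, which is what the exponential-sum asymptotic encodes. A secondary technical point is uniformity of all constants in the simplex $\Tr$ (i.e. in the matrix $T$) — this is available in \eqref{2.5} and in the self-contained upper bound of \cite[Lemma 2.2]{M}, but one has to make sure the circle-method error terms used in the $\ell^2$ bootstrap are likewise $T$-independent, or at least that the final maximal estimate tolerates a $T$-dependent constant as stated ($C_{d,k,\Tr}$). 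Finally, the transition from the single-scale $\ell^2$-with-$\eps$-loss bound to the maximal function requires summing over dyadic scales $\la\sim 2^j$; since each dyadic block already loses $\la^{-\delta}$ in the error term, this sum converges, but the main-term pieces must be handled by a separate square-function estimate exactly as in \cite{MSW}, and I would cite that machinery rather than reprove it.
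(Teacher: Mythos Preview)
Your proposal has a genuine gap in part (i), and for part (ii) it heads in a direction entirely different from---and much heavier than---what the paper actually does.

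\textbf{The gap in (i).} You assert the pointwise bound
\[
|A_\la(f_1,\dots,f_k)(x)|\lesssim \Bigl(\prod_{i\neq 1}\|f_i\|_\infty\Bigr)\,\widetilde A_\la f_1(x),
\]
but this is precisely the estimate the paper warns \emph{fails} in the discrete setting: for fixed $y_1$ on the sphere $|y_1|^2=\la^2 t_{11}$, the partial count $\sum_{y_2,\dots,y_k} S_{\la^2 T}(y_1,\dots,y_k)$ is \emph{not} uniformly comparable to $N_{\la\Tr}/N_\la$. Kitaoka's asymptotics \eqref{2.5} control the total count, not these fibers. You acknowledge this obstacle later but still invoke the bound in your argument for (i), so the derivation of the $\ell^p\times\ell^\infty\times\cdots\to\ell^p$ endpoint with $p>d/(d-2)$ is unsupported. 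In the paper the endpoint obtained is only $p>2d/(d-2)$, and the factor $2$ is not an interpolation artifact: it arises because the pointwise inequality one can actually prove (Proposition~\ref{Propn1}(i)) is
\[
A_\ast(f_1,\dots,f_k)(x)\le C\,\|f_1\|_\infty\cdots\|f_{k-1}\|_\infty\,A_\ast(f_k^2)(x)^{1/2},
\]
with the square on $f_k$ forced by a Cauchy--Schwarz step. That Cauchy--Schwarz converts the uncontrolled fiber count into an $\ell^2$-sum of the form $\sum_{y_k,\uy_1,\uy_1'} S_{\la^2 T}(\uy_1,y_k)S_{\la^2 T}(\uy_1',y_k)$, which \emph{is} controllable by the upper bound \eqref{3.2'} from \cite{M}, and this is where the dimension condition $d\ge 4k+1$ enters (the doubled system has $2k-1$ vectors).

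\textbf{The approach in (ii).} The paper uses no Fourier analysis, no circle-method error terms, no comparison with continuous averages, and no square-function bootstrap. Instead it iterates the same Cauchy--Schwarz-plus-counting idea: Proposition~\ref{Propn1}(ii) gives
\[
A_\ast(f_1,\dots,f_k)(x)\le C\,A_\ast(f_1^2,\dots,f_{k-1}^2)(x)^{1/2}\,A_\ast(f_k^2)(x)^{1/2},
\]
proved by Cauchy--Schwarz in $\uy_1=(y_1,\dots,y_{k-1})$, then a second Cauchy--Schwarz in $(y_k,y_k')$, reducing to a count of solutions to a $2k$-vector quadratic system; estimate \eqref{3.2'} handles that count when $d\ge 4k+3$. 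Iterating this inequality $k-1$ times yields a product of linear spherical maximal functions applied to $|f_j|^{2^{k+1-j}}$ (with $|f_1|^{2^{k-1}}$, $|f_2|^{2^{k-1}}$), and \eqref{2.3} plus H\"older finishes. The region $\mathcal{C}_k$ and the constraints $y_1+\cdots+y_j<1-2^{-j}$ are exactly the numerology produced by these repeated squarings together with symmetry and interpolation---your intuition about ``repeated halving'' is right, but the mechanism is this elementary recursion, not an $\ell^2$ Fourier bootstrap.
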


For a visualization of those $p_1,\dots,p_k$ for which Theorem \ref{Thm1} gives boundedness for these discrete multilinear maximal operators, see Figure \ref{fig4} (with $m=2$).

Note, as above, that if we know that  $A_\ast$ is bounded on $\ell^{p_{1}}\times\cdots\times\ell^{p_{k}}\to\ell^{r}$, then it is automatically bounded on  
$\ell^{q_{1}}\times\cdots\times\ell^{q_{k}}\to\ell^{s}$ for all $q_1\leq p_{1},\dots,q_k\leq p_{k}$, and $s\geq r$. 

In Section \ref{last} we discuss a generalization of our method that allows us to obtain better $\ell^{p_{1}}\times\cdots\times\ell^{p_{k}}\to\ell^{r}$ bounds provided that $d$ is sufficiently large. In particular, we  obtain $\ell^{p_{1}}\times\cdots\times\ell^{p_{k}}\to\ell^{r}$ bounds whenever $r>m/(m-1)\,\cdot\,d/(d-2)$ and $1\leq p_1,\dots,p_k\leq\infty$ with $1/r\leq1/p_1+\cdots+1/p_k$, provided $d\geq2m(k-1)+5$, and more general estimates whenever $d\geq2mk+3$.
See Theorem \ref{Thm2} and Figure \ref{fig4}.

We conclude discrete matters in Section \ref{example} by demonstrating that $\ell^{p}\times\ell^\infty\times\cdots\times\ell^{\infty}\to\ell^{p}$ boundedness fails for every $p\leq d/(d-2)$ in dimensions $d\geq 2k+3$.


\section{Proof of Theorem \ref{Thm1}}

The crucial ingredient in our proof of Theorem \ref{Thm1} is pointwise estimates for  $A_\ast(f_1,\dots f_k)$ in terms of the spherical maximal operator applied to appropriate powers of the functions $f_j$, specifically

\begin{propn}\label{Propn1}
Let $k\in\N$ and $\Tr=\{v_0=0,v_1,\ldots,v_k\}\subs\Z^d$ be a non-degenerate $k$-simplex. 

\begin{itemize}
\item[(i)] If $d\geq4k+1$, then for any $f_1,\dots,f_k:\Z^d\to\R$, one has
\be\label{4.9-}
A_\ast(f_1,\ldots,f_k)(x)\leq C_{d,k,\Delta}\,\|f_1\|_\infty\cdots\|f_{k-1}\|_\infty \ A_\ast (f_k^2)(x)^{1/2}
\ee
uniformly for $x\in\Z^d$.
\vspace{10pt}
\item[(ii)]
If $d\geq4k+3$, then for any $f_1,\dots,f_k:\Z^d\to\R$, one has
\be\label{4.9}
A_\ast(f_1,\ldots,f_k)(x)\leq C_{d,k,\Delta}\,A_\ast(f_1^2,\ldots,f_{k-1}^2)(x)^{1/2}\,A_\ast (f_k^2)(x)^{1/2}
\ee
and hence
\be\label{4.9'}
A_\ast(f_1,\ldots,f_k)(x)\leq C_{d,k,\Delta}\,A_\ast(f_1^{2^{k-1}})(x)^{1/2^{k-1}}A_\ast(f_2^{2^{k-1}})(x)^{1/2^{k-1}}\prod_{j=3}^k A_\ast(f_j^{2^{k+1-j}})(x)^{1/2^{k+1-j}}
\ee
uniformly for $x\in\Z^d$.
\end{itemize}
\end{propn}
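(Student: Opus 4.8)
The plan is to establish the pointwise bounds \eqref{4.9-}, \eqref{4.9}, and \eqref{4.9'} by a Cauchy--Schwarz argument that splits off the last variable $y_k$ and exploits the asymptotic count \eqref{2.5} both for the full simplex $\Delta$ and for the sub-simplices obtained by deleting a vertex. First I would fix $\lambda\in\sqrt{\N}$ with $N_{\lambda\Delta}>0$ and write, in the definition \eqref{2.6} of $A_\lambda(f_1,\dots,f_k)$, the inner sum over $y_k$ separately. Given $y_1,\dots,y_{k-1}$, the constraint $S_{\lambda^2 T}(y_1,\dots,y_k)=1$ forces $y_k$ to lie on a sphere of radius $\lambda|v_k|$ centered at the origin, intersected with the affine conditions $y_k\cdot y_i=\lambda^2 t_{ik}$ for $i<k$; in particular $y_k$ ranges over (a translate of) the set of lattice points of norm $\lambda|v_k|$ in a fixed $(d-k+1)$-dimensional sublattice depending on $y_1,\dots,y_{k-1}$. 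The point is that after summing out $y_k$ the remaining weight on $(y_1,\dots,y_{k-1})$ is, up to the normalization, the indicator $S_{\lambda^2 T'}(y_1,\dots,y_{k-1})$ of the $(k-1)$-simplex $\Delta'=\{0,v_1,\dots,v_{k-1}\}$ times the local count of admissible $y_k$'s.

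The key quantitative step is to control that local count. By \eqref{2.5} applied to $\Delta$ and to $\Delta'$ (valid since $d\ge 2k+3$, respectively $d\ge 2(k-1)+3=2k+1$, which is implied by the dimension hypotheses $d\ge 4k+3$ and $d\ge 4k+1$), one has $N_{\lambda\Delta}\asymp \det(\lambda^2 T)^{(d-k-1)/2}$ and $N_{\lambda\Delta'}\asymp\det(\lambda^2 T')^{(d-k)/2}$, so the ratio $N_{\lambda\Delta}/N_{\lambda\Delta'}$ is comparable to $\lambda^{d-k-1}$, which is the expected number of points of norm $\lambda|v_k|$ on a $(d-k+1)$-dimensional sphere — matching $N_{\lambda|v_k|}$ up to constants, again by the classical count for $d-k+1\ge 3$. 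Thus the average $A_\lambda(f_1,\dots,f_k)(x)$ is bounded by $C\,N_{\lambda\Delta'}^{-1}\sum_{y_1,\dots,y_{k-1}}|f_1(x+y_1)\cdots f_{k-1}(x+y_{k-1})|\cdot\big(N_{\lambda|v_k|}^{-1}\sum_{|y_k|=\lambda|v_k|}|f_k(x+y_k)|\big)$, where the spherical constraint on $y_k$ has been relaxed to the full sphere (this only increases the sum, which is what the dimension conditions let us afford). The inner sum is exactly $A_{\lambda|v_k|}|f_k|(x)\le A_\ast|f_k|(x)$, and by Cauchy--Schwarz in the $y_k$-sum one gets $A_\ast(f_k^2)(x)^{1/2}$, at the cost of an $N_{\lambda|v_k|}^{-1/2}$ factor that is absorbed into the comparable ratio.

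For part (i) I then bound $|f_1(x+y_1)\cdots f_{k-1}(x+y_{k-1})|\le \|f_1\|_\infty\cdots\|f_{k-1}\|_\infty$ and observe that $N_{\lambda\Delta'}^{-1}\sum_{y_1,\dots,y_{k-1}}S_{\lambda^2T'}(y_1,\dots,y_{k-1})=1$, giving \eqref{4.9-} after taking the supremum over $\lambda$. For part (ii), instead of using the $L^\infty$ bound on the first $k-1$ factors, I apply Cauchy--Schwarz a second time, now in the $(y_1,\dots,y_{k-1})$-sum against the probability weight $N_{\lambda\Delta'}^{-1}S_{\lambda^2T'}$, to obtain $A_\lambda(f_1,\dots,f_k)(x)\le C\,\big(N_{\lambda\Delta'}^{-1}\sum f_1^2(x+y_1)\cdots f_{k-1}^2(x+y_{k-1})S_{\lambda^2 T'}\big)^{1/2}A_\ast(f_k^2)(x)^{1/2}$, and the first factor is $A_\lambda(f_1^2,\dots,f_{k-1}^2)(x)^{1/2}\le A_\ast(f_1^2,\dots,f_{k-1}^2)(x)^{1/2}$; taking the supremum over $\lambda$ yields \eqref{4.9}. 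Then \eqref{4.9'} follows by iterating \eqref{4.9} (peeling off one vertex at a time, each application squaring the surviving functions) together with \eqref{4.9-} applied at the base case with $k=2$ to split the final pair $A_\ast(f_1^{2^{k-2}},f_2^{2^{k-2}})$; a careful bookkeeping of the exponents $2^{k-1}$, $2^{k-1}$, $2^{k-1}, 2^{k-2},\dots,4$ gives exactly the stated product, and the dimension requirement $d\ge 4k+3$ is what makes every intermediate application of \eqref{4.9} legitimate (the deepest sub-simplex still has $d$ large enough relative to its size, and one also needs $d\ge 4\cdot 2+1=9\le 4k+3$ for the base step). The main obstacle I anticipate is precisely this combinatorial/dimensional bookkeeping: verifying that when a vertex is deleted the residual weight really does factor as the lower simplex indicator times a sphere count with \emph{uniform} constants — this is where \eqref{2.5} being independent of $T$ is essential — and checking that the dimension thresholds $4k+1$ and $4k+3$ are exactly what is forced by needing \eqref{2.5} to hold simultaneously for $\Delta$ and all the sub-simplices encountered in the iteration, rather than a merely sufficient larger bound.
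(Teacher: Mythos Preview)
Your proposal has a genuine gap at the key quantitative step. You claim that
\[
|A_\lambda(f_1,\dots,f_k)(x)|\ \le\ C\,N_{\lambda\Delta'}^{-1}\sum_{\uy_1}|f_1\cdots f_{k-1}|\,S_{\lambda^2T'}(\uy_1)\cdot N_{\lambda|v_k|}^{-1}\sum_{|y_k|=\lambda|v_k|}|f_k(x+y_k)|,
\]
obtained by relaxing the constraint on $y_k$ to the full sphere. For this to hold with a $\lambda$--independent constant one would need $N_{\lambda\Delta'}\,N_{\lambda|v_k|}\le C\,N_{\lambda\Delta}$, but a direct computation from \eqref{2.5} gives $N_{\lambda\Delta'}N_{\lambda|v_k|}/N_{\lambda\Delta}\asymp\lambda^{2k-2}$, which diverges for every $k\ge2$. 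The underlying reason is exactly the obstruction the paper flags after \eqref{2.7}: for fixed $\uy_1$ the inner count $\sum_{y_k}S_{\lambda^2T}(\uy_1,y_k)$ is \emph{not} known to be uniformly $\asymp\lambda^{d-k-1}$; only its average over $\uy_1$ behaves well. Your argument implicitly upgrades the average control to a pointwise one, and that step fails.

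The paper's route is genuinely different and is what makes the dimension thresholds $4k+1$ and $4k+3$ appear. Rather than relaxing the constraint, one applies Cauchy--Schwarz \emph{keeping} the full indicator $S_{\lambda^2T}$, which produces (for part (ii)) the weight $W_{\lambda^2T}(y_k,y_k')=\sum_{\uy_1}S_{\lambda^2T}(\uy_1,y_k)S_{\lambda^2T}(\uy_1,y_k')$ and then a second Cauchy--Schwarz reduces matters to bounding $\sum_{y_k,y_k'}W_{\lambda^2T}(y_k,y_k')^2$. This is the number of solutions to a quadratic system in the $2k$ vectors $\uy_1,\uy_1',y_k,y_k'$, and is controlled by the counting estimate \eqref{3.2'} (Lemma~2.2 of \cite{M}), which requires $d\ge 2(2k)+3=4k+3$. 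For part (i) the analogous doubling involves $2k-1$ vectors, giving $d\ge4k+1$. So the dimension hypotheses do not come from applying \eqref{2.5} to $\Delta$ and its faces, as you suggest, but from applying \eqref{3.2'} to the \emph{doubled} system; this $\ell^2$ (rather than $\ell^\infty$) control of the inner count is the missing idea in your sketch.
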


We prove Proposition \ref{Propn1} in Section \ref{Propn1proof} below. 
It is straightforward to see that Theorem \ref{Thm1} (i) follows immediately from (\ref{4.9-}) and \eqref{2.3}, indeed these estimates imply
\[\|A_\ast(f_1,\ldots,f_k)\|_{p_k}\leq C_{d,k,\Delta}\,\|f_1\|_\infty\cdots\|f_{k-1}\|_\infty  \|A_\ast (f_k^2)\|^{1/2}_{p_k/2}\leq C_{d,k,\Delta}\, \|f_1\|_\infty\cdots\|f_{k-1}\|_\infty  \|f_k\|_{p_k}\]
provided $p_k>2d/(d-2)$. By symmetry and interpolation we then obtain part (i) of Theorem \ref{Thm1}.

Assuming the validity (\ref{4.9'}) for now, we can also quickly establish Theorem \ref{Thm1} (ii). An application of H\"older gives that
\[\|A_\ast(f_1,\ldots,f_k)\|_r\leq C_{d,k,\Delta}\|A_\ast(f_1^{2^{k-1}})\|_{p_1/2^{k-1}}^{1/2^{k-1}}\|A_\ast(f_2^{2^{k-1}})\|_{p_2/2^{k-1}}^{1/2^{k-1}}\prod_{j=3}^k \|A_\ast(f_j^{2^{k+1-j}})\|_{p_j/2^{k+1-j}}^{1/2^{k+1-j}}\]
whenever $1/r=1/p_1+\cdots+1/p_k$. Now if
\bee
p_1,p_2>2^{k-1}\frac{d}{d-2} \ \ \ \text{and} \ \ \ p_j>2^{k+1-j}\frac{d}{d-2} \ \ \text{for} \ \  3\leq j\leq k\eee
then by \eqref{2.3} we obtain 
\[\|A_\ast(f_1,\ldots,f_k)\|_r\leq C_{d,k,\Delta}\|f_1\|_{p_1}\cdots\|f_k\|_{p_k}\]
with $1/r=1/p_1+\cdots+1/p_k<(d-2)/d$. Theorem \ref{Thm1} (ii) now follows by symmetry and interpolation. \qed

\section{Proof of Proposition \ref{Propn1}}\label{Propn1proof}

The key ingredient of the proof of this proposition is an upper bound on the $\ell^1$ norm of the function $S_T(y_1,\ldots,y_k)$ defined in \eqref{2.4} (when $\la=1$), proved in Lemma 2.2 in \cite{M}, namely if $T=(t_{ij})$ is a positive definite integral $k\times k$ matrix then for $d\geq2k+3$ one has
\eq\label{3.2'}
\sum_{y_1,\ldots,y_k\in\Z^d} S_T(y_1,\ldots,y_k) \leq C_{d,k}\,\Bigl(\det(T)^{(d-k-1)/2} + |T|^{(d-k)(k-1)/2}\Bigr)
\ee
with $|T|:=(\sum_{i,j}t_{ij}^2)^{1/2}$. 


Let $\Tr=\{v_0=0,v_1,\ldots,v_k\}$ be a non-degenerate $k$-simplex with inner product matrix $T=(t_{ij})$. 
Note that for $\la\leq \la_{d,k,\Tr}$ we have that $N_{\la\Tr}\leq C_{d,k,\Tr}$ thus by H\"{o}lder's and Minkowski's inequalities we have that $\|A_\la(f_1,\ldots,f_k)\|_r\leq  C_{d,k,\Tr}\,\|f_1\|_{p_1}\ldots \|f_k\|_{p_k}$, whenever $1/p_1+\cdots+1/p_k=1/r$. Thus the supremum in \eqref{2.7} can be restricted to sufficiently large $\la$. Then because of $N_{\la\Tr}\asymp \la^{k(d-k-1)}$ one may replace the factor  $N_{\la\Tr}^{-1}$ with $\la^{-k(d-k-1)}$ in formula \eqref{2.6} and assume without loss of generality that $\la\geq \la_{d,k,\Tr}$.

\smallskip

We choose to focus first on establishing part (ii) of Proposition \ref{Propn1}.

\begin{proof}[Proof of Propostion \ref{Propn1} (ii)]
For a solution $y_1,\ldots,y_k$ to the system of equations \eqref{2.4} we will write $\uy_1=(y_1,\ldots,y_{k-1})$ to group the first $k-1$ variables and $T_1$ for the corresponding inner product matrix, i.e. for the $k-1\times k-1$ minor of $T$.
  For given $x\in\Z^d$, by the Cauchy-Schwarz inequality, in dimensions $d>2k$ we have
\begin{align*}
A_\la(f_1,\ldots,f_k)(x)^2 &\leq\ \la^{-d(k-1)+k(k-1)}\ \sum_{\uy_1} S_{\la^2T_1}(\uy_1) f_1^2(x+y_1)\cdots f_{k-1}^2(x+y_{k-1}) \\
&\quad\quad\quad\quad\quad\quad\quad\times\ \la^{-d(k+1)+k^2+3k}\ \sum_{\uy_1}\,\Bigl(\sum_{y_k} f_k(x+y_k) 
S_{\la^2 T}(\uy_1,y_k)\Bigr)^2\nonumber\\
& \leq\ A_\ast (f_1^2,\ldots,f_{k-1}^2)(x)\ B_\la(f_k,f_k)(x)\nonumber
\end{align*}
where 
\bee
B_\la(f_k,f_k)(x) = \la^{-d(k+1)+k^2+3k} \sum_{y_k,y_k'} f_k(x+y_k) f_k(x+y_k') W_{\la^2 T} (y_k,y_k')
\eee
with a weight function
\eq\label{4.3}
W_{\la^2 T} (y_k,y_k') = \sum_{\uy_1} S_{\la^2 T}(\uy_1,y_k)\, S_{\la^2 T}(\uy_1,y_k').
\ee

By a slight abuse of notation  let $S_\lm(y)=1$ if $|y|^2=t_{kk}\lm^2$ and equal to $0$ otherwise. Then one may write
\bee
B_\la(f_k,f_k)(x) = \la^{-d(k+1)+k^2+3k} \sum_{y_k,y_k'} f_k(x+y_k) f_k(x+y_k')S_\lm(y_k)S_\lm(y_{k'})W_{\la^2 T} (y_k,y_k')
\eee
and an application of Cauchy-Schwarz gives
\bee
B_\la(f_k,f_k)(x)^2 \leq \Bigl(\la^{-d+2} \sum_{y} f^2_k(x+y)S_\lm(y)\Bigr)^2 \Bigl(\lm^{-2dk+2k^2+6k-4}\sum_{y_k,y_k'}W_{\la^2 T} (y_k,y_k')^2\Bigr).
\eee

Thus, in order to establish (\ref{4.9}) and complete the proof of the proposition, it suffices to show that
\bee
\sum_{y_k,y_k'} W_{\la^2 T} (y_k,y_k')^2  \leq C\,\la^{2dk -2k^2-6k+4}
\eee
with a constant $C=C_{d,k,T}>0$.
By \eqref{4.3}, we have that
\bee
\sum_{y_k,y_k'} W_{\la^2 T} (y_k,y_k')^2 = \sum_{\uy_1,\uy_1',y_k,y_k'} 
S_{\la^2 T}(\uy_1,y_k)S_{\la^2 T}(\uy_1',y_k)S_{\la^2 T}(\uy_1,y_k')S_{\la^2 T}(\uy_1',y_k').
\eee

The above expression is the number of solutions $y_1,\ldots,y_k,y_1',\ldots,y_k'\in\Z^d$ to the system of quadratic equations
\begin{align}\label{4.7}
y_i\cdot y_j &=y_i'\cdot y_j'=\la^2 t_{ij},\ \text{for}\ \  1\leq i,j\leq k-1\nonumber\\
y_i\cdot y_k& = y_i'\cdot y_k = y_i\cdot y_k' = y_i'\cdot y_k' = \la^2 t_{ik}, \ \text{for}\ \  1\leq i\leq k-1\\
\ y_k\cdot y_k &=y_k'\cdot y_k'=\la^2 t_{kk}\nonumber.
\end{align}

For any solution $y_1,\ldots,y_k,y_1',\ldots,y_k'$ of the system \eqref{4.7} introduce the parameters $(s_{ij})_{1\leq i,j\leq k-1}$ and $s_{kk}$ such that 
\eq\label{4.8}
y_i\cdot y_j' =\la^2 s_{ij}\ \text{for}\ \ 1\leq i,j\leq k-1\ \text{and}\ \ y_k\cdot y_k'=\la^2 s_{kk}.
\ee

We call the set of parameters $S=(s_{ij},s_{kk})_{1\leq i,j\leq k-1}$ \emph{admissible} if the system \eqref{4.7}-\eqref{4.8} have a solution. For any admissible set of parameters $S$ let $\la^2 T_S$ denote the $2k\times 2k$ inner product matrix of the system \eqref{4.7}-\eqref{4.8}, and note that $\la^2 T_S$ is a positive semi-definite integral matrix with entries $O_T(\la^2)$. 

We consider two cases.


\underline{Case 1:} Assume that the matrix $T_S$ is positive definite. Then in dimensions $d\geq4k+3$ one may apply estimate \eqref{3.2'} to the matrix $\la^2 T_S$ which shows that the number of solutions to the system \eqref{4.7}-\eqref{4.8} is bounded by $C\,\la^{2dk-2k(2k+1)}$. Since there at most $C\,\la^{2(k-1)^2+2}$ admissible sets $S$, such admissible sets contribute to at most $C\,\la^{2dk-2k^2-6k+4}$ solutions to the system \eqref{4.7}, for some constant $C=C_{d,k,T}>0$.

\underline{Case 2:} Assume $\det (T_S)=0$. Then the vectors $y_1,\ldots,y_k,y_1',\ldots,y_k'$ are linearly dependent. Let $M:= \text{span}\{y_1,\ldots,y_k,y_1',\ldots,y_k'\}\subs\R^d$. Since $y_1,\ldots,y_k$ are linearly independent one may extend these vectors with vectors $y_{i_1}',\ldots y_{i_l}'$, for some $1\leq l<k$, to obtain a basis of of the vector space $M$. Write $I=\{i_1,\ldots,i_l\}$, if $j\notin I$, then $y_j'\in M$ moreover the inner products $y_j\cdot y_i$ for $1\leq i\leq k$, and $y_j\cdot y_i'$ for $i\in I$ are all determined by equations \eqref{4.7}-\eqref{4.8}. It follows that $y_j'$ is uniquely determined for $j\notin I$, thus the number of solutions for a fixed index set $I$ is bounded by the number of $k+l$-tuples $y_1,\ldots,y_k,y_{i_1}',\ldots y_{i_l}'$ satisfying equations 
\eqref{4.7}-\eqref{4.8}. The inner products of these vectors form a positive definite matrix, thus applying estimate \eqref{3.2'} we obtain that number of solutions is bounded by $\,C\la^{d(k+l)-(k+l)(k+l+1)}< C\,\la^{2dk-2k(2k+1)}\,$, in dimensions $d>4k$.
As the number of possible index sets $I$ depends only on $k$, the total number of linearly dependent solutions to the system \eqref{4.7}-\eqref{4.8} is also bounded by $C\,\la^{2dk-2k^2-6k+4}$.
\end{proof}

\begin{proof}[Proof of Propostion \ref{Propn1} (i)]

We use the same notation as above and assume that $\|f_1\|_\infty,\dots,\|f_{k-1}\|_\infty\leq1$. 

For any given $x\in\Z^d$ we have
\[A_\la(f_1,\ldots,f_k)(x)\leq\ \la^{-dk+k(k+1)}\ \sum_{y_k} f_k(x-y_k)S_\lm(y_k)\,\sum_{\uy_1} S_{\la^2T}(\uy_1,y_k) \]
and hence, after an application of Cauchy-Schwarz, we obtain
\[A_\la(f_1,\ldots,f_k)(x)^2 \leq\ A_*(f_k^2)(x) \ \la^{-d(2k-1)+2k(k+1)-2}  \sum_{y_k,\uy_1,\uy'_1} S_{\la^2 T}(\uy_1,y_k)\, S_{\la^2 T}(\uy'_1,y_k).\]

The sum in the expression above is the number of solutions $y_1,\ldots,y_{k-1},y_1',\ldots,y_{k-1}'\in\Z^d$ and $y_k\in\Z^d$ to the system of quadratic equations
\begin{align}\label{4.7'}
y_i\cdot y_j &=y_i'\cdot y_j'=\la^2 t_{ij},\ \text{for}\ \  1\leq i,j\leq k-1\nonumber\\
y_i\cdot y_k& = y_i'\cdot y_k = \la^2 t_{ik}, \ \text{for}\ \  1\leq i\leq k-1\\
\ y_k\cdot y_k &=\la^2 t_{kk}\nonumber.
\end{align}


If one now argues, as in the proof of part (ii) above, it follows from estimate \eqref{3.2'} that
\[\sum_{y_k,\uy_1,\uy'_1} S_{\la^2 T}(\uy_1,y_k)\, S_{\la^2 T}(\uy'_1,y_k)\leq C_{d,k,T} \, \la^{d(2k-1)-2k(k+1)+2}.\]
We choose to omit the details of this calculation.
\end{proof}

\section{A strengthening of Theorem \ref{Thm1} in high dimensions}\label{last}

If, in the proof of Proposition \ref{Propn1}, we apply H\"older's inequality with conjugate exponents $m/(m-1)$ and $m$ instead of the Cauchy-Schwarz inequality, this results in $y_1,\ldots,y_{k-1}$ and $y_1,\ldots,y_{k}$ being increased $m$-fold as opposed to being doubled, in parts (i) and (ii) respectively. 

Working through these details, which we omit (but one may consult Section \ref{general} for the analogous, and somewhat similar, details in the continuous setting), one obtains the following

\begin{propn}\label{Propn2}
Let $k\in\N$ and $\Tr=\{v_0=0,v_1,\ldots,v_k\}\subs\Z^d$ be a non-degenerate $k$-simplex.

Let $m\geq2$ be an integer and set $q=m/(m-1)$. 

\begin{itemize}
\item[(i)] If $d\geq2m(k-1)+5$, then for any $f_1,\dots,f_k:\Z^d\to\R$, one has
\bee
A_\ast(f_1,\ldots,f_k)(x)\leq C_{d,m,\Delta}\,\|f_1\|_\infty\cdots\|f_{k-1}\|_\infty \ A_\ast (|f_k|^q)(x)^{1/q}
\eee
uniformly for $x\in\Z^d$.
\vspace{3pt}
\item[(ii)]
If $d\geq2mk+3$, then for any $f_1,\dots,f_k:\Z^d\to\R$, one has
\bee
A_\ast(f_1,\ldots,f_k)(x)\leq C_{d,m,\Delta}\,A_\ast(|f_1|^q,\ldots,|f_{k-1}|^q)(x)^{1/q}\,A_\ast (|f_k|^q)(x)^{1/q}
\eee
and hence
\bee
A_\ast(f_1,\ldots,f_k)(x)\leq C_{d,m,\Delta}\,A_\ast(|f_1|^{q^{k-1}})(x)^{1/q^{k-1}}A_\ast(|f_2|^{q^{k-1}})(x)^{1/q^{k-1}}\prod_{j=3}^k A_\ast(|f_j|^{q^{k+1-j}})(x)^{1/q^{k+1-j}}
\eee
uniformly for $x\in\Z^d$.
\end{itemize}
\end{propn}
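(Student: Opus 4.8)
The plan is to mimic the proof of Proposition \ref{Propn1}, replacing each application of the Cauchy-Schwarz inequality with H\"older's inequality for the conjugate pair $(q,m)$, $q=m/(m-1)$, and tracking how the exponents of $\la$ change.

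\medskip

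For part (ii): fix $x\in\Z^d$ and, as before, reduce to $\la\geq\la_{d,k,\Tr}$ so that $N_{\la\Tr}\asymp\la^{k(d-k-1)}$ may be replaced by $\la^{-k(d-k-1)}$. Split the sum in \eqref{2.6} by grouping $\uy_1=(y_1,\dots,y_{k-1})$ against $y_k$, and apply H\"older in the form $\sum a_j b_j\le(\sum a_j^q)^{1/q}(\sum b_j^m)^{1/m}$ to the factorization
$f_1(x+y_1)\cdots f_{k-1}(x+y_{k-1})\cdot S_{\la^2T}(\uy_1,y_k)^{1/q}$ times $\bigl(\sum_{y_k}f_k(x+y_k)S_{\la^2T}(\uy_1,y_k)\bigr)$; the point is that raising the $S$-weight on the $\uy_1$-side to a power does nothing since $S$ is $0/1$-valued. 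This yields
\[
A_\la(f_1,\dots,f_k)(x)^q\le A_\ast(|f_1|^q,\dots,|f_{k-1}|^q)(x)\cdot B_\la(f_k)(x),
\]
where $B_\la(f_k)(x)$ is $\la^{-(\text{appropriate power})}\sum_{\uy_1}\bigl(\sum_{y_k}f_k(x+y_k)S_{\la^2T}(\uy_1,y_k)\bigr)^m$ with the correct normalization (the $\la$-exponents are forced by homogeneity / the count \eqref{2.5}). Then, after inserting the one-variable sphere weight $S_\la(y_k)$ as in the $m=2$ case, a second application of H\"older (again with exponents $q$ and $m$, now in the $y_k$-variable against the weight) bounds $B_\la(f_k)(x)$ by $A_\ast(|f_k|^q)(x)$ times a normalized count of the $m$-fold system: the number of solutions $y_1,\dots,y_{k-1},y_k^{(1)},\dots,y_k^{(m)}$ (together with auxiliary copies of the first block, one per repeated $y_k$) of the analogue of \eqref{4.7} in which $y_k$ has been replaced by $m$ copies $y_k^{(1)},\dots,y_k^{(m)}$, all sharing the same inner products with the $y_i$'s. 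Exactly as in Cases 1 and 2 of the proof of Proposition \ref{Propn1}(ii), one introduces admissible cross-parameters $s_{ij}$, applies \eqref{3.2'} to the resulting positive definite matrix (Case 1) or peels off linearly-dependent vectors and applies \eqref{3.2'} to a smaller positive definite Gram matrix (Case 2), and checks that the main term dominates precisely when $d\ge2mk+3$. This count matches the $\la$-power absorbed into the normalization, giving the first displayed inequality of (ii); the second (fully iterated) inequality follows by applying the first one $k-1$ times, noting that $A_\ast(|g|^q)^{1/q}$ iterated $j$ times produces the exponent $q^j$ and the claimed product structure.

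\medskip

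Part (i) is the same computation with $f_1,\dots,f_{k-1}$ placed in $L^\infty$ (WLOG bounded by $1$), so only the single H\"older step in the $y_k$-variable is used; the relevant counting problem is the $m$-fold analogue of \eqref{4.7'} (the first block appears only once, $y_k$ is replaced by $m$ copies), whose Gram matrix has size $(k-1)+m$ in the generic case, and \eqref{3.2'} gives the right bound as soon as $d\ge2m(k-1)+5$. The dimension thresholds $2m(k-1)+5$ and $2mk+3$ are exactly what is needed for the ``Case 1'' application of \eqref{3.2'} — where one compares $\det(\la^2 T_S)^{(d-N-1)/2}$ against the competing term $|\la^2 T_S|^{(d-N)(N-1)/2}$ with $N$ the size of the system — to have the determinant term win.

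\medskip

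The main obstacle is purely bookkeeping: getting the $\la$-exponents in the normalizations to agree on both sides, and verifying in Case 2 (degenerate Gram matrix) that, after replacing $y_k$ by $m$ copies, peeling off the dependent vectors, and applying \eqref{3.2'} to the surviving positive definite Gram matrix of size at most $(k-1)+m$ (for (i)) or at most $2k$ with the cross-terms (for (ii)), the exponent is still strictly smaller than the main term under the stated dimension hypotheses. Since the excerpt explicitly says these details are omitted and refers the reader to the continuous analogue in Section \ref{general}, I would present the $m$-fold H\"older step and the modified counting system in detail and then invoke the same two-case analysis as in Proposition \ref{Propn1}(ii), indicating the arithmetic that produces $2m(k-1)+5$ and $2mk+3$.
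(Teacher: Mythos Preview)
Your overall plan is exactly the paper's: it explicitly says Proposition \ref{Propn2} follows from the proof of Proposition \ref{Propn1} by replacing each Cauchy--Schwarz with H\"older for the pair $(q,m)$, omits the details, and points to Section \ref{general} for the continuous analogue. Your description of part (ii) (two H\"older steps, expand to $m$ copies of $\uy_1$ and $m$ copies of $y_k$, then run the two-case count via \eqref{3.2'} on a Gram matrix of size $mk$) is correct and matches the paper.

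There is, however, a bookkeeping slip in your description of part (i). After bounding $|f_1|,\dots,|f_{k-1}|$ by $1$ and applying H\"older in the single variable $y_k$ with exponents $(q,m)$, the $m$th power lands on the inner factor $\sum_{\uy_1} S_{\la^2 T}(\uy_1,y_k)$, so it is the block $\uy_1=(y_1,\dots,y_{k-1})$ that gets increased $m$-fold, while $y_k$ appears only once. The resulting system, the $m$-fold analogue of \eqref{4.7'}, involves $m(k-1)+1$ vectors (not $(k-1)+m$ as you wrote), and applying \eqref{3.2'} to its $(m(k-1)+1)\times(m(k-1)+1)$ Gram matrix is exactly what forces $d\ge 2\bigl(m(k-1)+1\bigr)+3=2m(k-1)+5$. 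Your stated threshold is correct, but it does not match the Gram-matrix size you wrote down; with the sizes you gave, the threshold would be $2(k-1)+2m+3$, which only agrees with $2m(k-1)+5$ when $k=2$. The paper states this explicitly: ``this results in $y_1,\ldots,y_{k-1}$ \dots being increased $m$-fold \dots in part (i).'' Once you swap which block is replicated, your argument for (i) goes through as written.
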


Proposition \ref{Propn2} allows us to establish a strengthening of Theorem \ref{Thm1} in high dimensions, namely Theorem \ref{Thm2} below. Before stating this result we define for each integer $k\geq2$ and $1<q\leq2$, a symmetric convex region $\mathcal{C}_{k,q}\subseteq[0,1]^k$. 
We define $\mathcal{C}_{k,q}$ to be all those points $(x_1,\dots,x_k)\in[0,1]^k$ with \[x_1+\cdots+x_k<q^{-1}+q^{-2}+\cdots+q^{-(k-1)}+q^{-(k-1)}\] that also have the property that for any $1\leq j\leq k-1$ one has $y_1+\cdots+y_j<q^{-1}+\cdots+q^{-j}$ for any choice  $\{y_1,\dots,y_j\}\subset\{x_1,\dots,x_k\}$. Note that $\mathcal{C}_{k}=\mathcal{C}_{k,2}$, and that $\mathcal{C}_{k,q}$ contains the cube $q^{-(k-1)}\cdot[0,1)^k$ which approaches $[0,1)^k$ as $q\to1$.
\begin{figure}[h]
\tdplotsetmaincoords{80}{110}
\begin{tikzpicture}[tdplot_main_coords]
\draw[->]  (0,0,0) -- (7.5,0,0) node[anchor=north]{$x_1$};
\draw
[->] (0,0,0) -- (0,4.5,0) node[anchor=west]{$x_2$};
\draw[->] (0,0,0) -- (0,0,4.5) node[anchor=east]{$x_3$};

\draw[fill=gray!15, fill opacity=0.8, style = dotted] (5,0,0) -- (5,3,0)  -- (5,3,2.8) -- (5,0,2.75) -- cycle;
\draw[fill=gray!20, fill opacity=0.8, style = dotted]  (5,3,0) -- (5,3,2.8) -- (4.4,3.55,2.80) --  (4.4,3.55,0) -- cycle;
\draw[fill=gray!25, fill opacity=0.8, style = dotted]  (4.4,3.55,2.80) --  (4.4,3.55,0) -- (0,3.4,0) -- (0,3.4,2.9)  -- cycle;
\draw[fill=gray!10, fill opacity=0.8, style = dotted] (5,3,2.8)  -- (4.8,3.2,3.55) -- (4.4,0,3.4) -- (5,0,2.75) -- cycle;
\draw[fill=gray!15, fill opacity=0.8, style = dotted] (4.4,3.55,2.80) -- (5,3,2.8) -- (4.8,3.2,3.55) -- cycle;
\draw[fill=gray!15, fill opacity=0.8, style = dotted] (4.8,3.2,3.55) -- (4.4,3.55,2.80) -- (0,3.4,2.9) -- (0,2.9,3.5) -- cycle;

\draw[fill=gray!10, fill opacity=0.8, style = dotted] (0,0,3.4) -- (0,2.9,3.5) -- (4.8,3.2,3.55) --(4.4,0,3.4) -- cycle;

\node [draw, circle, minimum width=4pt, inner sep=0pt] at (6,0,0) {};
\node [draw, color=red,circle, minimum width=4pt, inner sep=0pt] at (5,0,0) {};
\node[color=red,above left=2pt of {(6,0,0)}, outer sep=4pt] {\small$(q^{-1},0,0)$};

\node[below =2pt of {(6,1,0)}, outer sep=2pt] {\small$(1,0,0)$};
\node [draw, color=red, circle, minimum width=4pt, inner sep=0pt] at (5,3,0) {};
\node [draw, color=red, color=red, circle, minimum width=4pt, inner sep=0pt] at (5,3,2.8) {};
\node[color=red, left=2pt of {(5,3,2.8)}, outer sep=2pt] {\small$\vec{x}_1$};

\node [draw, color=red, circle, minimum width=4pt, inner sep=0pt] at (4.4,3.55,0) {};

\node [draw, color=red, circle, minimum width=4pt, inner sep=0pt] at (5,0,2.75) {};

\node [draw, color=red, circle, minimum width=4pt, inner sep=0pt] at (4.4,3.55,2.80) {};
\node[color=red, right=2pt of {(4.4,3.55,2.80)}, outer sep=2pt] {\small$\vec{x}_2$};

\node [draw, color=red, circle, minimum width=4pt, inner sep=0pt] at (4.4,0,3.4) {};

\node [draw,  color=red, circle, minimum width=4pt, inner sep=0pt] at (0,0,3.4) {};
\node[color=red,left =2pt of {(1,0,4)}, outer sep=4pt] {\small$(0,0,q^{-1})$};

\node [draw, circle, minimum width=4pt, inner sep=0pt] at (0,0,3.75) {};
\node[above right=2pt of {(0,0,3.75)}, outer sep=2pt] {\small$(0,0,1)$};

\node [draw, color=red, circle, minimum width=4pt, inner sep=0pt] at (4.8,3.2,3.55) {};
\node[color=red, above=2pt of {(4.8,3.2,3.55)}, outer sep=2pt] {\small$\vec{x}_3$};

\node [draw, color=red, circle, minimum width=4pt, inner sep=0pt] at (0,3.4,2.9) {};
\node [draw, color=red, circle, minimum width=4pt, inner sep=0pt] at (0,2.9,3.5) {};

\node [draw,  circle, minimum width=4pt, inner sep=0pt] at (0,3.7,0) {};

\node [draw, color=red, circle, minimum width=4pt, inner sep=0pt] at (0,3.4,0) {};
\node[color=red,below =2pt of {(0,4.0,-0.3)}, outer sep=4pt] {\small$(0,q^{-1},0)$};
\node[above =2pt of {(0,4.7,0.2)}, outer sep=4pt] {\small$(0,1,0)$};

\draw[color=black, style=dashed] (6,4,0) -- (6,4,3.9);

\draw[color=black, style=dashed] (6,0,0) -- (6,4,0) -- (0,3.7,0);
\draw[color=black, style=dashed] (6,0,0) -- (6,0,3.8) -- (0,0,3.75);
\draw[color=black, style=dashed] (0,0,3.75) -- (0,3.7,3.9) -- (0,3.7,0);
\draw[color=black, style=dashed] (6,0,0) -- (6,4,0) -- (0,3.7,0);
\draw[color=black, style=dashed] (6,0,3.8) -- (6,4,3.9) -- (0,3.7,3.9);

\end{tikzpicture}
\caption{Illustration of $\mathcal{C}_{3,q}$ where $\vec{x}_1=(q^{-1}, q^{-2}, q^{-2})$, $\vec{x}_2=( q^{-2}, q^{-1},q^{-2})$, and $\vec{x}_3=( q^{-2}, q^{-2},q^{-1}).$}\label{fig6}
\end{figure}
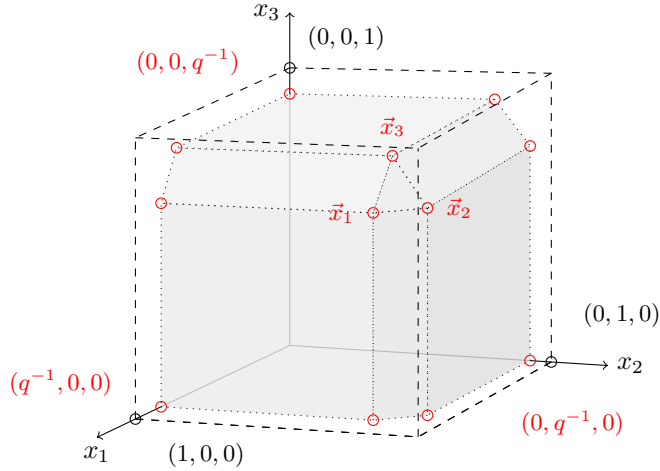


\begin{thm}\label{Thm2} Let $k\in\N$ and  $\Tr=\{v_0=0,v_1,\ldots,v_k\}\subs\Z^d$ be a non-degenerate $k$-simplex.

Let $m\geq2$ be an integer and set $q=m/(m-1)$.


\begin{itemize}
\item[(i)] If $d\geq2m(k-1)+5$, $r>q\,d/(d-2)$, and $1\leq p_1,\dots,p_k\leq\infty$ with $1/r\leq1/p_1+\cdots+1/p_k$, one has 
\bee
\|A_\ast(f_1,\ldots,f_k)\|_r \leq C_{d,m,\Tr}\, \|f_1\|_{p_1}\cdots \|f_k\|_{p_k}.
\eee


\item[(ii)] If $d\geq 2mk+3$, then for any \[r>(q^{-1}+q^{-2}+\cdots+q^{-(k-1)}+q^{-(k-1)})^{-1}\,d/(d-2) \   \text{and} \ p_1,\dots,p_k>q\,d/(d-2)\] whose reciprocals $(1/p_1,\dots,1/p_k)\in (d-2)/d\cdot\mathcal{C}_{k,q}$ and satisfy $1/r=1/p_1+\cdots+1/p_k$, 
one has 
\bee
\|A_\ast(f_1,\ldots,f_k)\|_r \leq C_{d,m,\Tr}\, \|f_1\|_{p_1}\cdots \|f_k\|_{p_k}.
\eee
\end{itemize}
\end{thm}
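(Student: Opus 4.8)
\textbf{Proof plan for Theorem \ref{Thm2}.}

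The plan is to deduce Theorem \ref{Thm2} from Proposition \ref{Propn2} in exactly the same way that Theorem \ref{Thm1} was deduced from Proposition \ref{Propn1}, with the only change being bookkeeping of exponents when Cauchy--Schwarz ($2$-fold dilation) is replaced by H\"older with exponents $q=m/(m-1)$ and $m$ ($m$-fold dilation). For part (i), I would start from the pointwise bound in Proposition \ref{Propn2}(i): assuming $\|f_1\|_\infty,\dots,\|f_{k-1}\|_\infty\leq 1$, it gives $A_\ast(f_1,\dots,f_k)(x)\le C\,A_\ast(|f_k|^q)(x)^{1/q}$, so that $\|A_\ast(f_1,\dots,f_k)\|_{p_k}\le C\,\|A_\ast(|f_k|^q)\|_{p_k/q}^{1/q}\le C\,\|f_k\|_{p_k}$ whenever $p_k/q>d/(d-2)$, i.e. $p_k>q\,d/(d-2)$, by the discrete spherical maximal bound \eqref{2.3}; this needs $d\ge 2m(k-1)+5$. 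By symmetry this gives $L^{p}\times L^\infty\times\cdots\times L^\infty\to L^{p}$ boundedness for every $p>q\,d/(d-2)$ in each slot, and then multilinear interpolation (Riesz--Thorin-type interpolation between the $k$ endpoint estimates) yields boundedness on $\ell^{p_1}\times\cdots\times\ell^{p_k}\to\ell^r$ whenever $1/r=1/p_1+\cdots+1/p_k$ and each $p_j>q\,d/(d-2)$; finally the nesting of discrete $\ell^p$ norms upgrades $1/r=\sum 1/p_j$ to $1/r\le\sum 1/p_j$, and also lets us relax each $p_j$ downward, which is what is asserted.

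For part (ii), the key is the iterated pointwise estimate that follows from Proposition \ref{Propn2}(ii) by repeated application, namely
\[
A_\ast(f_1,\dots,f_k)(x)\le C\,A_\ast(|f_1|^{q^{k-1}})(x)^{1/q^{k-1}}A_\ast(|f_2|^{q^{k-1}})(x)^{1/q^{k-1}}\prod_{j=3}^k A_\ast(|f_j|^{q^{k+1-j}})(x)^{1/q^{k+1-j}},
\]
valid for $d\ge 2mk+3$. Applying H\"older in $x$ with exponents $p_1,\dots,p_k$ summing (reciprocally) to $r$, and then \eqref{2.3} to each factor, I need $p_1/q^{k-1}>d/(d-2)$, $p_2/q^{k-1}>d/(d-2)$, and $p_j/q^{k+1-j}>d/(d-2)$ for $3\le j\le k$; equivalently, setting $x_j=(d-2)/d\cdot 1/p_j$, the condition is $x_1,x_2<q^{-(k-1)}$ and $x_j<q^{-(k+1-j)}$ for $3\le j\le k$. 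This is exactly (one choice of the ordering in) the definition of $\mathcal{C}_{k,q}$: the region $\mathcal{C}_{k,q}$ is the convex hull / symmetrization of the collection of such exponent constraints over all orderings of the $k$ slots, and one checks that the "sum" constraint $x_1+\cdots+x_k<q^{-1}+\cdots+q^{-(k-1)}+q^{-(k-1)}$ and the partial-sum constraints $y_1+\cdots+y_j<q^{-1}+\cdots+q^{-j}$ for subsets of size $j$ are precisely what is obtained by taking convex combinations of these vertex constraints together with interpolation against the trivial ($L^\infty$) slots. So the argument is: obtain the estimate at the finitely many "vertex" exponent tuples coming from Proposition \ref{Propn2}(ii) (for each of the $k!$ orderings, and their degenerations where some slots are sent to $\infty$), then invoke multilinear interpolation to fill in the convex region $(d-2)/d\cdot\mathcal{C}_{k,q}$. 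The constraint $r>(q^{-1}+\cdots+q^{-(k-1)}+q^{-(k-1)})^{-1}d/(d-2)$ is then just the restriction $1/r=\sum 1/p_j$ with $(1/p_1,\dots,1/p_k)\in(d-2)/d\cdot\mathcal{C}_{k,q}$, reading off the supremum of $\sum x_j$ over $\mathcal{C}_{k,q}$.

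\textbf{Main obstacle.} The analytic content is entirely in Proposition \ref{Propn2}, which is asserted (its proof being the $q$-fold analogue of the Cauchy--Schwarz argument proving Proposition \ref{Propn1}, requiring the counting bound \eqref{3.2'} in the correspondingly higher dimension); granting it, the only real work here is the \emph{combinatorial/interpolation bookkeeping}: verifying that the convex hull of the exponent tuples produced by Proposition \ref{Propn2}(ii), over all $k!$ slot-orderings and their $L^\infty$-degenerations, coincides with $(d-2)/d\cdot\mathcal{C}_{k,q}$, and confirming that multilinear Riesz--Thorin interpolation applies to produce bounds throughout this region with $1/r=\sum 1/p_j$. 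I expect checking that the partial-sum description of $\mathcal{C}_{k,q}$ exactly matches the convex hull of the vertices (no more, no less) to be the one genuinely fiddly point; it is a finite-dimensional convex-geometry verification, carried out by matching facets, and I would handle the $k=2,3$ cases explicitly (as in Figures \ref{fig1}, \ref{fig3}, \ref{fig4}, \ref{fig6}) before stating the general pattern.
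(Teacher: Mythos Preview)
Your approach is essentially identical to the paper's: deduce part (i) from Proposition~\ref{Propn2}(i) via the $\ell^p\times\ell^\infty\times\cdots\times\ell^\infty\to\ell^p$ estimate followed by symmetry and interpolation, and part (ii) from the iterated pointwise bound in Proposition~\ref{Propn2}(ii) via H\"older, \eqref{2.3}, and then symmetry and interpolation to fill out $(d-2)/d\cdot\mathcal{C}_{k,q}$. One small slip: after interpolating the $k$ endpoint estimates in part (i) you obtain the open simplex $1/r=\sum 1/p_j<q^{-1}(d-2)/d$, not the condition ``each $p_j>q\,d/(d-2)$'' (the latter region is neither contained in nor implied by the interpolation); the nesting argument then correctly extends from that simplex to the full region $r>q\,d/(d-2)$, $1/r\le\sum 1/p_j$.
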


 For a visualization of those $p_1,\dots,p_k$ for which Theorem \ref{Thm2} gives boundedness for these discrete multilinear maximal operators, see Figures \ref{fig3} and \ref{fig4} below.

Note that Theorem \ref{Thm2} provides us with a strengthening of Theorem \ref{Thm1} (i) and (ii) for all $d\geq 6k-1$ and $d\geq 6k+3$, respectively. Note that Theorem \ref{Thm2} is of particular interest as $m\to\infty$ (and hence $d\to\infty$) for fixed $k$, since this corresponds to $q\to1$ through values of the form $m/(m-1)$ with $m\in \N$.

\begin{center}
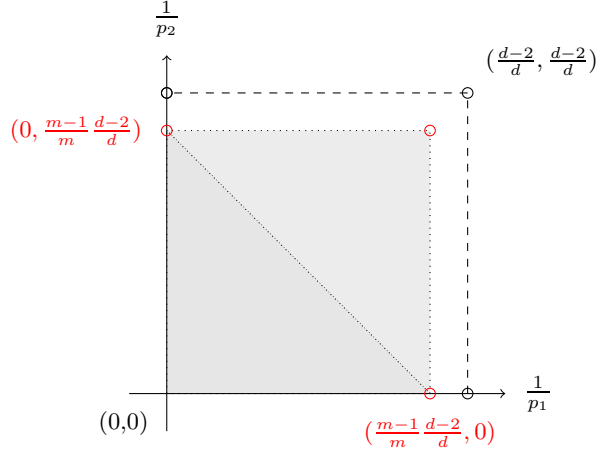

	\begin{tikzpicture}[scale=0.5]
	\draw[fill=gray!15, style = dotted] (7,0) -- (7,7) -- (0,7) -- cycle;
	\draw[fill=gray!20, style = dotted] (7,0) -- (0,7) -- (0,0) -- cycle;
	\draw[color=black,->] (-1,0) -- (9,0);
	\draw[color=black,->] (0,-1) -- (0,9);
	\draw[color=black, style=dashed] (8,0) -- (8,8);
	\draw[color=black, style=dashed] (8,8) -- (0,8);

	\node [draw, circle, minimum width=4pt, inner sep=0pt] at (8,8) {};
	\node[above right=2pt of {(8,8)}, outer sep=2pt] {\small$(\frac{d-2}{d},\frac{d-2}{d})$};
	
	
	\node [draw, color=red, circle, minimum width=4pt, inner sep=0pt] at (7,7) {};
	
	\node [draw, circle, minimum width=4pt, inner sep=0pt] at (8,0) {};

	\node [draw, color=red, circle, minimum width=4pt, inner sep=0pt] at (7,0) {};
	\node[color=red, below =2pt of {(7,0)}, outer sep=4pt] {\small$(\frac{m-1}{m}\frac{d-2}{d},0)$};
	
	\node [draw, circle, minimum width=4pt, inner sep=0pt] at (0,8) {};
	
	\node [draw, color=red, circle, minimum width=4pt, inner sep=0pt] at (0,7) {};
	\node[color=red,left=2pt of {(0,7)}, outer sep=4pt] {\small$(0,\frac{m-1}{m}\frac{d-2}{d})$};
	
	\node [draw, circle, minimum width=4pt, inner sep=0pt] at (0,8) {};
	\node[below left=2pt of {(0,0)}, outer sep=2pt] {\small(0,0)};
	
	\node[right=2pt of {(9,0)}, outer sep=2pt] {$\frac{1}{p_1}$};
	
	\node[above=2pt of {(0,9)}, outer sep=2pt] {$\frac{1}{p_2}$};
	\end{tikzpicture}
	\captionof{figure}{\color{black} Region of boundedness for the Discrete Bilinear ($k=2$) Maximal Operator in $\mathbb{Z}^d$. We obtain the lower gray triangle if $d\geq 2m+5$ and the full square provided $d\geq 4m+3$. }\label{fig3}
\end{center}


\begin{figure}[h]
\tdplotsetmaincoords{80}{110}
\begin{tikzpicture}[tdplot_main_coords]
\draw[->]  (0,0,0) -- (7.5,0,0) node[anchor=north]{$\frac{1}{p_1}$};
\draw
[->] (0,0,0) -- (0,4.5,0) node[anchor=west]{$\frac{1}{p_2}$};
\draw[->] (0,0,0) -- (0,0,4.5) node[anchor=east]{$\frac{1}{p_3}$};

\draw[fill=gray!50, fill opacity=0.8, style = dotted] (5,0,0) -- (0,0,3.4)  -- (0,3.4,0) -- cycle;

\draw[fill=gray!15, fill opacity=0.8, style = dotted] (5,0,0) -- (5,3,0)  -- (5,3,2.8) -- (5,0,2.75) -- cycle;
\draw[fill=gray!20, fill opacity=0.8, style = dotted]  (5,3,0) -- (5,3,2.8) -- (4.4,3.55,2.80) --  (4.4,3.55,0) -- cycle;
\draw[fill=gray!25, fill opacity=0.8, style = dotted]  (4.4,3.55,2.80) --  (4.4,3.55,0) -- (0,3.4,0) -- (0,3.4,2.9)  -- cycle;
\draw[fill=gray!10, fill opacity=0.8, style = dotted] (5,3,2.8)  -- (4.8,3.2,3.55) -- (4.4,0,3.4) -- (5,0,2.75) -- cycle;
\draw[fill=red!20, fill opacity=0.8, style = dotted] (4.4,3.55,2.80) -- (5,3,2.8) -- (4.8,3.2,3.55) -- cycle;
\draw[fill=gray!15, fill opacity=0.8, style = dotted] (4.8,3.2,3.55) -- (4.4,3.55,2.80) -- (0,3.4,2.9) -- (0,2.9,3.5) -- cycle;

\draw[fill=gray!10, fill opacity=0.8, style = dotted] (0,0,3.4) -- (0,2.9,3.5) -- (4.8,3.2,3.55) --(4.4,0,3.4) -- cycle;

\node [draw, circle, minimum width=4pt, inner sep=0pt] at (6,0,0) {};
\node [draw, color=red,circle, minimum width=4pt, inner sep=0pt] at (5,0,0) {};
\node[color=red,above left=2pt of {(6,0,0)}, outer sep=4pt] {\small$(\frac{m-1}{m}\frac{d-2}{d},0,0)$};

\node[below =2pt of {(6,1,0)}, outer sep=2pt] {\small$(\frac{d-2}{d},0,0)$};
\node [draw, circle, minimum width=4pt, inner sep=0pt] at (5,3,0) {};
\node [draw, color=red, circle, minimum width=4pt, inner sep=0pt] at (5,3,2.8) {};
\node[color=red, left=2pt of {(5,3,2.8)}, outer sep=2pt] {\small$\vec{x}_1$};

\node [draw, circle, minimum width=4pt, inner sep=0pt] at (4.4,3.55,0) {};

\node [draw, circle, minimum width=4pt, inner sep=0pt] at (5,0,2.75) {};

\node [draw, color=red, circle, minimum width=4pt, inner sep=0pt] at (4.4,3.55,2.80) {};
\node[color=red, right=2pt of {(4.4,3.55,2.80)}, outer sep=2pt] {\small$\vec{x}_2$};

\node [draw, circle, minimum width=4pt, inner sep=0pt] at (4.4,0,3.4) {};

\node [draw,  color=red, circle, minimum width=4pt, inner sep=0pt] at (0,0,3.4) {};
\node[color=red,left =2pt of {(1,0,4)}, outer sep=4pt] {\small$(0,0,\frac{m-1}{m}\frac{d-2}{d})$};

\node [draw, circle, minimum width=4pt, inner sep=0pt] at (0,0,3.75) {};
\node[above right=2pt of {(0,0,3.75)}, outer sep=2pt] {\small$(0,0,\frac{d-2}{d})$};

\node [draw, color=red, circle, minimum width=4pt, inner sep=0pt] at (4.8,3.2,3.55) {};
\node[color=red, above=2pt of {(4.8,3.2,3.55)}, outer sep=2pt] {\small$\vec{x}_3$};

\node [draw, circle, minimum width=4pt, inner sep=0pt] at (0,3.4,2.9) {};
\node [draw, circle, minimum width=4pt, inner sep=0pt] at (0,2.9,3.5) {};

\node [draw, circle, minimum width=4pt, inner sep=0pt] at (0,3.7,0) {};

\node [draw, color=red, circle, minimum width=4pt, inner sep=0pt] at (0,3.4,0) {};
\node[color=red,below =2pt of {(0,4.0,-0.3)}, outer sep=4pt] {\small$(0,\frac{m-1}{m}\frac{d-2}{d},0)$};
\node[above =2pt of {(0,4.7,0.2)}, outer sep=4pt] {\small$(0,\frac{d-2}{d},0)$};

\draw[color=black, style=dashed] (6,4,0) -- (6,4,3.9);

\draw[color=black, style=dashed] (6,0,0) -- (6,4,0) -- (0,3.7,0);
\draw[color=black, style=dashed] (6,0,0) -- (6,0,3.8) -- (0,0,3.75);
\draw[color=black, style=dashed] (0,0,3.75) -- (0,3.7,3.9) -- (0,3.7,0);
\draw[color=black, style=dashed] (6,0,0) -- (6,4,0) -- (0,3.7,0);
\draw[color=black, style=dashed] (6,0,3.8) -- (6,4,3.9) -- (0,3.7,3.9);

\end{tikzpicture}
\caption*{\textcolor{red}{$\vec{x}_1=(\frac{m-1}{m}\frac{d-2}{d}, (\frac{m-1}{m})^2\frac{d-2}{d}, (\frac{m-1}{m})^2\frac{d-2}{d})$}}
\caption*{\textcolor{red}{$\vec{x}_2=( (\frac{m-1}{m})^2\frac{d-2}{d}, \frac{m-1}{m}\frac{d-2}{d},(\frac{m-1}{m})^2\frac{d-2}{d})$}}
\caption*{\textcolor{red}{$\vec{x}_3=( (\frac{m-1}{m})^2\frac{d-2}{d}, (\frac{m-1}{m})^2\frac{d-2}{d},\frac{m-1}{m}\frac{d-2}{d})$
}}
\caption{Region of boundedness for the Discrete Trilinear ($k=3$) Maximal Operator in $\mathbb{Z}^d$. We obtain the dark grey tetrahedron if  $d\geq4m+5$ and the larger convex region provided $d\geq6m+3$.}\label{fig4}
\end{figure}

\begin{proof}[Proof of Theorem \ref{Thm2}]\label{cont}


We first establish part (i). Proposition \ref{Propn2} (i)  implies
\[\|A_\ast(f_1,\ldots,f_k)\|_{p_k}\leq C_{d,m,\Delta}\,\|f_1\|_\infty\cdots\|f_{k-1}\|_\infty  \|A_\ast (|f_k|^q)\|^{1/q}_{p_k/q}\leq C_{d,m,\Delta}\, \|f_1\|_\infty\cdots\|f_{k-1}\|_\infty  \|f_k\|_{p_k}\]
provided $p_k>q\,d/(d-2)$. By symmetry and interpolation we then obtain part (i) of Theorem \ref{Thm2}.

\smallskip

To establish part (ii), we note that Proposition \ref{Propn2} (ii)  ensures that
\bee
A_\ast(f_1,\ldots,f_k)(x)\leq C_{d,m,\Delta}\,A_\ast(|f_1|^{q^{k-1}})(x)^{1/q^{k-1}}A_\ast(|f_2|^{q^{k-1}})(x)^{1/q^{k-1}}\prod_{j=3}^k A_\ast(|f_j|^{q^{k+1-j}})(x)^{1/q^{k+1-j}}.
\eee
An application of H\"older, as in the proof of Theorem \ref{Thm1}, then gives
\[\|A_\ast(f_1,\ldots,f_k)\|_r\leq C_{d,m,\Delta}\|A_\ast(|f_1|^{q^{k-1}})\|_{p_1/q^{k-1}}^{1/q^{k-1}}\|A_\ast(|f_2|^{q^{k-1}})\|_{p_2/q^{k-1}}^{1/q^{k-1}}\prod_{j=3}^k \|A_\ast(|f_j|^{q^{k+1-j}})\|_{p_j/q^{k+1-j}}^{1/q^{k+1-j}}\]
whenever $1/r=1/p_1+\cdots+1/p_k$. Now if
\bee
p_1,p_2>q^{k-1}\frac{d}{d-2} \ \ \ \text{and} \ \ \ p_j>q^{k+1-j}\frac{d}{d-2} \ \ \text{for} \  \  3\leq j\leq k\eee
then by \eqref{2.3} we obtain 
\[\|A_\ast(f_1,\ldots,f_k)\|_r\leq C_{d,m,\Delta}\|f_1\|_{p_1}\cdots\|f_k\|_{p_k}\]
with $1/r=1/p_1+\cdots+1/p_k<(1/q+1/q^2+\cdots+1/q^{k-1}+1/q^{k-1})(d-2)/d$. 

Part (ii) of Theorem \ref{Thm2} now follows by symmetry and interpolation.
\end{proof}

\smallskip
\comment{
\section{An example for Multilinear operators in $\Z^d$}\label{example}

Simple examples show that estimates of the form $\ \|A_\ast(f_1,f_2,\ldots,f_k)\|_p \leq C\,\|f_1\|_p \|f_2\|_\infty \cdots \|f_k\|_\infty\,$ are not possible for $1\leq p \leq d/(d-2)$, in dimensions $d\geq 2k+3$. 
Indeed, let $f_1:=\de_0$ the point mass at the origin, and let $f_2=\cdots =f_k=1$. For given $x\in\Z^d$ and $\la\in\sqrt{\N}$, we have
\[A_\la(f_1,f_2,\ldots,f_k) (x) \geq C\,\la^{-dk+k(k+1)}\,\sum_{y_2,\ldots,y_k}
S_{\la^2 T}(x,y_2,\ldots,y_k).\]
Choosing $\la=|x|$, one has
\bee
A_\ast(f_1,f_2,\ldots,f_k) (x) \geq C\,|x|^{-dk+k(k+1)}\,\sum_{y_2,\ldots,y_k}
S_{|x|^2 T}(x,y_2,\ldots,y_k) = |x|^{-d+2}\ W_{|x|}(x),
\eee
where 
\eq\label{5.2'}
W_{|x|}(x)= |x|^{-d(k-1)+k(k+1)-2}\,\sum_{y_2,\ldots,y_k}
S_{|x|^2 T}(x,y_2,\ldots,y_k).
\ee

Let $p\geq 1$. Summing for $2^j\leq |x|<2^{j+1}$, one estimates by H\"{o}lder's inequality

\eq\label{5.3'}
\sum_{2^j\leq |x|<2^{j+1}} A_\ast(f_1,f_2,\ldots,f_k) (x)^p\, \geq C\,
2^{jd-jp(d-2)}\, \big(2^{-jd}\sum_{2^j\leq |x|<2^{j+1}}  W_{|x|}(x)\big)^p.
\ee

Moreover, by \eqref{5.2'}, one has 
\[2^{-jd}\sum_{2^j\leq |x|<2^{j+1}}  W_{|x|}(x) \geq C\,
2^{-j(dk-k(k+1)+2)} \sum_{2^j\leq |x|<2^{j+1}}\sum_{y_2,\ldots,y_k} S_{|x|^2 T}(x,y_2,\ldots,y_k).\]

Writing $\la=|x|\in\sqrt{N}$ the right side of the above expression can further estimated from below by
\[ 2^{-j(dk-k(k+1)+2)}\sum_{2^j\leq\la <2^{j+1}} \sum_{y_1,\ldots,y_k} S_{\la^2 T}(y_1,\ldots,y_k) \geq C\,2^{-2j} \sum_{2^j\leq\la <2^{j+1}} 1\, \geq\,C,\]
for some constant $C=C_{d,k,T}>0$, using estimate \eqref{2.5} and the fact that there are approximately $2^{2j}$ values of $\la\in\sqrt{\N}$ satisfying $2^j\leq \la <2^{j+1}$.
This implies that for $1\leq p\leq d/(d-2)$ the left side of \eqref{5.3'} is bigger than a constant for every $j\in\N$ thus 
$\| A_\ast(f_1,f_2,\ldots,f_k)\|_p=\infty$ while $\|f_1\|_p=1$ and $\|f_j\|_\infty =1$ for all $2\leq j\leq k$.}

\section{An example for Multilinear operators in $\Z^d$}\label{example}

Let $d\geq 2k+3$. Simple examples show that estimates $\ \|A_\ast(f_1,f_2,\ldots,f_k)\|_r \leq C\,\|f_1\|_{p_1} \|f_{p_2}\| \cdots \|f_k\|_{p_k}\,$ are not possible when $\frac{1}{r}=\frac{1}{p_1}+\ldots +\frac{1}{p_k}$, for any $r>1$, $p_1\leq \frac{d}{d-2}$, $1\leq p_i\leq\infty$. By symmetry, this shows that the reciprocals $(1/p_1,\ldots,1/p_k)$ for which the discrete maximal operator is bounded is contained in the cube $Q_{d}=(d-2)/d \cdot [0,1)^k$ and contains the smaller cube $q^{-(k-1)}\cdot Q_d$ with $q=m/m-1$, in dimensions $d>2km+2$ by Theorem \ref{Thm2}. 

Assume first that $p_i<\infty$ for $2\leq i\leq k$.
Let $f_1:=\de_0$ the point mass at the origin, and for $2\leq i\leq k$ let  $f_i(x)=|x|^{-d/p_i}\,(\log\,(|x|))^{-\frac{1}{p_i}-\tau}$ ($\tau>0$) for $|x|\geq 2$, and set $f_i(x)=0$ of $|x|\leq 1$. 
Given $x\in\Z^d$, $x\neq 0$ and $\la\in\sqrt{\N}$, we have
\[A_\la(f_1,\ldots,f_k) (x) =\,\la^{-dk+k(k+1)}\,\sum_{y_1,\ldots,y_k}
S_{\la^2 T}(y_1,y_2,\ldots,y_k)\ \de_0(x-y_1)\prod_{i=2}^k f_i(x-y_i).\]

Choosing $\la=|x|$, one has that $y_1=x$ and $|x-y_i|\geq c\,|x|$ for $2\leq i\leq k$,
for any non-zero term. Thus one estimates
\bee
A_\ast(f_1,\ldots,f_k) (x) \geq C\,|x|^{-dk+k(k+1)}\,|x|^{-d(\frac{1}{r}-\frac{1}{p_1})}\,
(\log\,(|x|))^{-(\frac{1}{r}-\frac{1}{p_1}-k\tau)}\ 
W_{|x|}(x)
\eee
where $W_{|x|}(x):=\sum_{y_2,\ldots,y_k}S_{|x|^2 T}(x,y_2,\ldots,y_k).$
We will show that 
\eq\label{5.2'}
\sum_{x\in\Z^d} W_{|x|}(x)^r= \sum_{j=0}^\infty\ \sum_{2^j\leq |x|<2^{j+1}}  W_{|x|}(x)^r
\geq \sum_{j=0}^\infty 2^{jd}\,\big(\,2^{-jd}\sum_{2^j\leq |x|<2^{j+1}}  W_{|x|}(x)\big)^r =\infty
\ee
for which it is enough to show that 
\[
2^{-jd} \sum_{2^j\leq |x|<2^{j+1}}  W_{|x|}(x) \geq C\,2^{-\frac{jd}{r}} j^{-\frac{1}{r}}.
\]

Using the notation $|x|\approx 2^j$ for $2^j\leq |x|<2^{j+1}$, one has
\eq\label{5.3'}
2^{-jd}\sum_{|x|\approx 2^j}  W_{|x|}(x) \geq c\,
2^{-jd (1+\frac{1}{r}-\frac{1}{p_1})} j^{-(\frac{1}{r}-\frac{1}{p_1}-k\tau)} 2^{-j(dk-k(k+1))} \sum_{|x|\approx 2^j}\sum_{y_2,\ldots,y_k} S_{|x|^2 T}(x,y_2,\ldots,y_k).   
\ee

Writing $\la=|x|\in\sqrt{N}$ the right side of the above expression can further estimated from below by
\[ 2^{-j(dk-k(k+1))}\sum_{2^j\leq\la <2^{j+1}} \sum_{y_1,\ldots,y_k} S_{\la^2 T}(y_1,\ldots,y_k) \geq C \sum_{2^j\leq\la <2^{j+1}} 1\, \geq\,C\,2^{2j}\]
for some constant $C=C_{d,k,T}>0$, using estimate \eqref{2.5} and the fact that there are approximately $2^{2j}$ values of $\la\in\sqrt{\N}$ satisfying $2^j\leq \la <2^{j+1}$.
This implies that for $p_1\leq d/(d-2)$, $\tau\leq 1/k p_1$ the left side of \eqref{5.3'} is bigger than 
\[C\,2^{-j(d-2-\frac{d}{p_1}+\frac{d}{r})}\,j^{-\frac{1}{r}+\frac{1}{p_1}-k\tau} \geq C\,
2^{-\frac{jd}{r}}\,j^{-\frac{1}{r}}.\]
This shows the validity of \eqref{5.2'}. The same argument works when $p_i=\infty$ for some $2\leq i\leq k$ by choosing $f_i$ to be the constant 1 function.

\section{Main results for our Multilinear Maximal Operators on $\mathbb{R}^d$}\label{cont}

For simplicity we first state our main result, in the continuous setting, in the bilinear ($k=2$) case.
\begin{thm}\label{Thm3} 
Let $\Tr=\{v_0=0,v_1,v_2\}\subs\R^d$ be a non-degenerate triangle.

Let $m\geq2$ be an integer and set $q=m/(m-1)$. 
If $d\geq 2m$, $r>q/2\cdot d/(d-1)$, and $p_1,p_2>q\,d/(d-1)$ with $1/r=1/p_1+1/p_2$, 
then
one has the estimate
\bee
\|\mathcal{A}_\ast(f_1,f_2)\|_r \leq C_{d,\Tr}\, \|f_1\|_{p_1}\|f_2\|_{p_2}.
\eee
 \end{thm}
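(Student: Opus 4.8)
The strategy is to mirror the discrete argument of Section \ref{last}, replacing Kitaoke's bound \eqref{2.5} (and its companion \eqref{3.2'}) by the corresponding facts about spheres in $\R^d$, and to replace the discrete spherical maximal theorem \eqref{2.3} by Stein's theorem \eqref{2.3'}. The key intermediate step is a pointwise estimate analogous to Proposition \ref{Propn2}(ii) with $k=2$: for $d\geq 2m$ and any $f_1,f_2:\R^d\to\R$,
\be\label{cont-key}
\mathcal{A}_\ast(f_1,f_2)(x)\leq C_{d,\Tr}\,\mathcal{A}_\ast(|f_1|^q)(x)^{1/q}\,\mathcal{A}_\ast(|f_2|^q)(x)^{1/q},
\ee
where on the right $\mathcal{A}_\ast$ denotes Stein's (linear) spherical maximal operator over the appropriate spheres $S^{d-1}(0,|v_1|)$ and $S^{d-1}(0,|v_2|)$. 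Granting \eqref{cont-key}, the theorem follows exactly as in the proof of Theorem \ref{Thm1}(ii): by H\"older,
\[
\|\mathcal{A}_\ast(f_1,f_2)\|_r\leq C_{d,\Tr}\,\|\mathcal{A}_\ast(|f_1|^q)\|_{p_1/q}^{1/q}\,\|\mathcal{A}_\ast(|f_2|^q)\|_{p_2/q}^{1/q}\leq C_{d,\Tr}\,\|f_1\|_{p_1}\|f_2\|_{p_2}
\]
whenever $p_1/q, p_2/q>d/(d-1)$, i.e. $p_1,p_2>q\,d/(d-1)$, and $1/r=1/p_1+1/p_2$, which forces $r>q/2\cdot d/(d-1)$.

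\textbf{Deriving \eqref{cont-key}.} Fix $x$ and $\la>0$. Disintegrate the Haar measure on $SO(d)$ by first choosing the image $y_1=\la\, U(v_1)$, which ranges over the sphere of radius $\la|v_1|$ with its normalized surface measure $d\sigma_1$, and then choosing the image $y_2=\la\, U(v_2)$; conditioned on $y_1$, the point $y_2$ ranges over a $(d-2)$-dimensional sphere (the intersection of the sphere of radius $\la|v_2|$ with the sphere of radius $\la|v_1-v_2|$ about $y_1$), with a normalized measure $d\sigma_{2,y_1}$. Thus
\[
\mathcal{A}_\la(f_1,f_2)(x)=\int f_1(x+y_1)\Bigl(\int f_2(x+y_2)\,d\sigma_{2,y_1}(y_2)\Bigr)d\sigma_1(y_1).
\]
Now apply H\"older in $y_1$ with exponents $q$ and $m$: the first factor contributes $\bigl(\int |f_1(x+y_1)|^q d\sigma_1\bigr)^{1/q}\leq \mathcal{A}_\ast(|f_1|^q)(x)^{1/q}$, and it remains to bound
\[
\Bigl(\int\Bigl|\int f_2(x+y_2)\,d\sigma_{2,y_1}(y_2)\Bigr|^{m}d\sigma_1(y_1)\Bigr)^{1/m}.
\]
For the inner integral one first applies H\"older (or Cauchy–Schwarz) in $y_2$ to replace $f_2$ by $|f_2|^q$ against the probability measure $d\sigma_{2,y_1}$; expanding the resulting $m$-fold product and integrating in $y_1$, one is led to bound an average of $|f_2(x+z_1)|^q\cdots|f_2(x+z_m)|^q$ against the measure on $m$-tuples $(z_1,\dots,z_m)$ arising from $m$ independent draws from $d\sigma_{2,y_1}$, integrated over $y_1$. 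The point — this is the continuous analogue of the counting in Cases 1 and 2 of the proof of Proposition \ref{Propn1}(ii), and the reason $d\geq 2m$ enters — is that this combined measure is \emph{absolutely continuous with respect to $d\sigma_2^{\otimes m}$ with a bounded density}, uniformly in $\la$, precisely because in dimension $d\geq 2m$ the generic configuration of $m$ points on a $(d-2)$-sphere together with the "center" direction $y_1$ spans a subspace of full expected dimension, so no degenerate locus carries positive mass; the low-dimensional (linearly dependent) configurations form a lower-dimensional set of measure zero, exactly as Case 2 was negligible in the discrete setting. Consequently the $m$-th moment above is dominated by $\prod_{i=1}^m\bigl(\int |f_2(x+z_i)|^q d\sigma_2(z_i)\bigr)^{1/m}\leq \mathcal{A}_\ast(|f_2|^q)(x)^{q/m}\cdot\ldots$, which after taking the $1/m$ power and combining the bookkeeping of exponents ($1/q = 1 - 1/m$) yields $\mathcal{A}_\ast(|f_2|^q)(x)^{1/q}$. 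Taking the supremum over $\la>0$ gives \eqref{cont-key}.

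\textbf{Main obstacle.} The routine parts (H\"older, interpolation, symmetry, reduction to \eqref{2.3'}) are immediate. The real work is the scale-uniform absolute-continuity/boundedness statement for the disintegrated measure on $(y_1;z_1,\dots,z_m)$ — i.e. making precise, in the continuous category, the dichotomy used in Cases 1 and 2 of Proposition \ref{Propn1}(ii) and verifying that the threshold $d\geq 2m$ (rather than $d\geq 2m(k-1)+5$, the $k=2$ shift being absorbed by the absence of the integrality overhead present in \eqref{3.2'}) is exactly what guarantees a bounded density with constant independent of $\la$. I would carry this out by homogeneity: rescale so $\la=1$, identify the measure as the pushforward of Haar measure on $SO(d)$ under $U\mapsto (U v_1; U v_2^{(1)},\dots,U v_2^{(m)})$ for $m$ independent copies, compute the Jacobian of this map onto its image, and check that the coarea factor stays bounded away from $0$ on a full-measure set precisely when $2m\leq d$. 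This dimensional count, and confirming the degenerate stratum is genuinely null (not merely small), is the crux; everything else is bookkeeping parallel to Section \ref{last}, and for this reason the paper defers the detailed computation to the general-$k$ treatment in Section \ref{general}.
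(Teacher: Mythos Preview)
Your overall strategy matches the paper's: reduce to the pointwise estimate \eqref{cont-key} (this is the $k=2$ case of Proposition \ref{finalprop}) and then conclude via H\"older and Stein's theorem \eqref{2.3'}. That final step is exactly how the paper closes the argument.

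The gap is in your derivation of \eqref{cont-key}, specifically the claim that the combined measure on $(z_1,\dots,z_m)$ has \emph{bounded} density with respect to $d\sigma_2^{\otimes m}$. This is false. The weight $w(z_1,\dots,z_m)$ is singular: for $k=2$ one has $w(z_1,\dots,z_m)\leq C_{d,m,T}\,\mathrm{vol}(z_1,\dots,z_m)^{-1}$ (the paper's Lemma \ref{geo}), and this upper bound is sharp---the weight blows up as the $z_i$ become linearly dependent. Already for $m=2$ the explicit computation in the paper (the $k=m=2$ case) gives $w(y_2,y_2')\asymp\mathrm{vol}(y_2,y_2')^{-1}$. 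Your observation that the degenerate locus has measure zero is correct but does not buy boundedness of the density; absolute continuity holds, $L^\infty$ does not.

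What the argument actually requires is $\int w^m\,d\sigma_2^{\otimes m}<\infty$. The preliminary H\"older in $y_2$ you insert is unnecessary and confuses the bookkeeping: one should expand the $m$-th power directly, obtaining $m$ copies of $|f_2|$ (not $|f_2|^q$) against $w$, and then apply H\"older in $(z_1,\dots,z_m)$ with exponents $q$ and $m$ to split off $\mathcal{A}_\ast(|f_2|^q)^{m/q}$, leaving $\bigl(\int w^m\bigr)^{1/m}$. The condition $d\geq 2m$ enters not as a genericity/dimension count but as the threshold for \emph{integrability of the singularity}: one needs $\int \mathrm{vol}(z_1,\dots,z_m)^{-m}\,d\sigma_2^{\otimes m}<\infty$, and a dyadic decomposition in the distance from $z_m$ to $\mathrm{span}(z_1,\dots,z_{m-1})$ (the paper's Lemma \ref{withs} with $s=m$) shows this holds precisely when $d\geq m+s=2m$. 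So you have located the crux correctly, but the statement to be proved is $L^m$-integrability of a $\mathrm{vol}^{-1}$-type weight, not boundedness.
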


If for each $1<q\leq2$ we define new symmetric convex region $\widetilde{\mathcal{C}}_{2,q}\subseteq[0,1]^2$ to be the convex hull of the region $\mathcal{C}_{2,q}$, as defined in Section \ref{last}, and the triangle of points  $(x_1,x_2)\in[0,1]^2$ with $x_1+x_2<1$, then combining Theorem 0 with Theorem \ref{Thm3} gives the following 
 
 \begin{cor}\label{Cor3} 
Let $\Tr=\{v_0=0,v_1,v_2\}\subs\R^d$ be a non-degenerate triangle.

Let $m\geq2$ be an integer and set $q=m/(m-1)$. 
If $d\geq 2m$, $r>q/2\cdot d/(d-1)$, and the reciprocals of $p_1$ and $p_2$ satisfy both $(1/p_1,1/p_2)\in (d-1)/d\cdot\widetilde{\mathcal{C}}_{2,q}$ and $1/r=1/p_1+1/p_2$, 
then one has the estimate
\bee
\|\mathcal{A}_\ast(f_1,\ldots,f_k)\|_r \leq C_{d,k,\Tr}\, \|f_1\|_{p_1}\cdots \|f_k\|_{p_k}.
\eee
 \end{cor}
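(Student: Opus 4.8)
The plan is to derive Corollary \ref{Cor3} as an immediate consequence of Theorem \ref{Thm3} and Theorem 0, together with multilinear interpolation on the set of admissible reciprocal pairs $(1/p_1,1/p_2)$. First I would record what each of the two input results provides as a region in the $(1/p_1,1/p_2)$-plane. Theorem 0 (specialized to $k=2$) says that $\mathcal{A}_\ast$ is bounded on $L^{p_1}\times L^{p_2}\to L^r$ whenever $1/r = 1/p_1 + 1/p_2 < (d-1)/d$ and $1/p_1, 1/p_2 \geq 0$; that is, the region of admissible reciprocal pairs contains the open triangle $\{(x_1,x_2)\in[0,1]^2 : x_1+x_2 < (d-1)/d\}$. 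Theorem \ref{Thm3} (with the given $m$, $q=m/(m-1)$, and $d\geq 2m$) provides boundedness whenever $(1/p_1,1/p_2) \in (d-1)/d \cdot \mathcal{C}_{2,q}$, since the conditions $p_1,p_2 > q\,d/(d-1)$ together with $1/r = 1/p_1+1/p_2$ are exactly the statement that $(1/p_1,1/p_2)$ lies in the scaled region $(d-1)/d\cdot\mathcal{C}_{2,q}$ (recall $\mathcal{C}_{2,q}$ is cut out by $x_1 < 1/q$, $x_2 < 1/q$, and $x_1+x_2 < 1/q + 1/q = 2/q$, which after scaling by $(d-1)/d$ become $1/p_1 < (q^{-1})(d-1)/d$, etc.).

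Next I would invoke multilinear real interpolation (Stein–Weiss / the multilinear Marcinkiewicz–Zygmund framework, or simply the bilinear Riesz–Thorin-type interpolation for sublinear operators that respect the scaling $1/r = 1/p_1 + 1/p_2$): the set of reciprocal pairs $(1/p_1,1/p_2)$ for which $\mathcal{A}_\ast : L^{p_1}\times L^{p_2}\to L^r$ is bounded, with $r$ determined by $1/r = 1/p_1+1/p_2$, is convex. Hence boundedness holds on the convex hull of the two regions above. By definition $\widetilde{\mathcal{C}}_{2,q}$ is precisely the convex hull of $\mathcal{C}_{2,q}$ with the triangle $\{x_1+x_2<1\}$; scaling commutes with taking convex hulls, so $(d-1)/d\cdot\widetilde{\mathcal{C}}_{2,q}$ is the convex hull of $(d-1)/d\cdot\mathcal{C}_{2,q}$ and the triangle $\{x_1+x_2<(d-1)/d\}$. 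Therefore $\mathcal{A}_\ast$ is bounded on $L^{p_1}\times L^{p_2}\to L^r$ for every $(1/p_1,1/p_2)\in (d-1)/d\cdot\widetilde{\mathcal{C}}_{2,q}$ with $1/r=1/p_1+1/p_2$, which is the assertion of the corollary (the stated condition $r > q/2\cdot d/(d-1)$ is automatically satisfied on this region, being the condition at the ``worst'' vertex coming from Theorem \ref{Thm3}, and is in fact implied by membership in the scaled hull). One should also note symmetry in $(f_1,f_2)$ is built into both input regions, so no extra argument is needed there.

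The one point requiring a little care — and the main (minor) obstacle — is justifying that the interpolation is legitimate for a maximal (hence only sublinear, not linear) operator acting multilinearly. The standard device is to linearize: for a fixed measurable choice function $\lambda(x)$, the operator $(f_1,f_2)\mapsto \mathcal{A}_{\lambda(x)}(f_1,f_2)(x)$ is bilinear, bounded with constant uniform in the choice function on each of the two regions by the above, and one applies bilinear complex interpolation (Calderón's multilinear interpolation theorem, or the bilinear Stein interpolation theorem) to this family, then takes the supremum over $\lambda(\cdot)$ at the end. Since the bounds in Theorem 0 and Theorem \ref{Thm3} are stated for the maximal operator itself, this linearization is routine, and the constants in the interpolated estimate depend only on $d$, $k=2$, $\Tr$ (and $m$). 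I would keep this paragraph brief in the writeup, citing the relevant interpolation theorem, since there is nothing genuinely new here — the corollary is, as the text says, just ``combining Theorem 0 with Theorem \ref{Thm3}.''
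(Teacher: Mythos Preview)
Your proposal is correct and matches the paper's approach exactly: the paper states Corollary \ref{Cor3} as an immediate consequence of ``combining Theorem 0 with Theorem \ref{Thm3}'' via interpolation over the convex hull defining $\widetilde{\mathcal{C}}_{2,q}$, and your write-up fills in precisely those details (including the standard linearization device for the maximal operator, which the paper leaves implicit under the phrase ``symmetry and interpolation'').
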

 
 Theorem \ref{Thm3}, and hence Corollary \ref{Cor3}, are a special case of Theorem \ref{Thm3'} below, they in fact form the base case of an inductive argument that will be used to prove Theorem \ref{Thm3'}.
 
 For a visualization of those $p_1$ and $p_2$ for which Corollary \ref{Cor3} gives boundedness for these bilinear maximal operators, see Figure \ref{fig1} below.
 
 \begin{center}
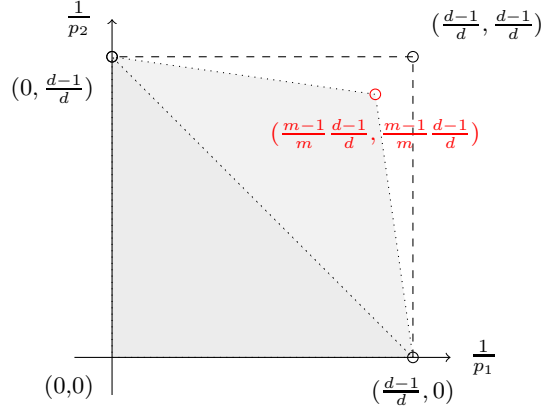

	\begin{tikzpicture}[scale=0.5]
	\draw[fill=gray!15, style = dotted] (0,0) -- (8,0)  -- (0,8) -- cycle;
	\draw[fill=gray!10, style = dotted] (8,0) -- (7,7) -- (0,8) -- cycle;
	\draw[color=black,->] (-1,0) -- (9,0);
	\draw[color=black,->] (0,-1) -- (0,9);
	\draw[color=black, style=dashed] (8,0) -- (8,8);
	\draw[color=black, style=dashed] (8,8) -- (0,8);

	\node [draw, circle, minimum width=4pt, inner sep=0pt] at (8,8) {};
	\node[above right=2pt of {(8,8)}, outer sep=2pt] {\small$(\frac{d-1}{d},\frac{d-1}{d})$};
	
	
	\node [draw, color=red, circle, minimum width=4pt, inner sep=0pt] at (7,7) {};
	\node[color=red,below=2pt of {(7,7)}, outer sep=4pt] {\small$(\frac{m-1}{m}\frac{d-1}{d},\frac{m-1}{m}\frac{d-1}{d})$};
	
	\node [draw, circle, minimum width=4pt, inner sep=0pt] at (8,0) {};
	\node[below=2pt of {(8,0)}, outer sep=2pt] {\small$(\frac{d-1}{d},0)$};
	
	\node [draw, circle, minimum width=4pt, inner sep=0pt] at (0,8) {};
	\node[below left=2pt of {(0,8)}, outer sep=2pt] {\small$(0,\frac{d-1}{d})$};
	
	\node [draw, circle, minimum width=4pt, inner sep=0pt] at (0,8) {};
	\node[below left=2pt of {(0,0)}, outer sep=2pt] {\small(0,0)};
	
	\node[right=2pt of {(9,0)}, outer sep=2pt] {$\frac{1}{p_1}$};
	
	\node[left=2pt of {(0,9)}, outer sep=2pt] {$\frac{1}{p_2}$};
	\end{tikzpicture}
	\captionof{figure}{\color{black} Region of boundedness we obtain for Bilinear Maximal Operator in $\mathbb{R}^d$, provided $d\geq2m$.}\label{fig1}
\end{center}

\bigskip

The simple observation that
\be\label{newtrivial}
\mathcal{A}_{*}(f_1,\dots,f_{k})(x)\leq \|f_k\|_\infty \,\mathcal{A}_{*}(f_1,\dots,f_{k-1})(x)
\ee
implies that bounds for multilinear maximal operators associated to $(k-1)$-simplices will always give rise to some corresponding estimates for multilinear maximal operators associated to $k$-simplices. In particular, it is easy to see using symmetry and interpolation that Corollary \ref{Cor3} gives the following non-trivial new bounds for trilinear operators.

\begin{cor}\label{Cor3'}
Let $\Tr=\{v_0=0,v_1,v_2,v_3\}\subs\R^d$ be a non-degenerate 3-simplex.

Let $m\geq2$ be an integer and set $q=m/(m-1)$. 
If $d\geq 2m$, $r>q/2\cdot d/(d-1)$, and the reciprocals of $p_1,p_2,p_3$ satisfy both $(1/p_1,1/p_2),(1/p_2,1/p_3), (1/p_1,1/p_3)\in (d-1)/d\cdot\widetilde{\mathcal{C}}_{2,q}$ and $1/r=1/p_1+1/p_2$, 
then 
\bee
\|\mathcal{A}_\ast(f_1,f_2,f_3)\|_r \leq C_{d,\Tr}\, \|f_1\|_{p_1}\|f_2\|_{p_2}\|f_3\|_{p_3}.
\eee
\end{cor}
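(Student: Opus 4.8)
The plan is to derive Corollary~\ref{Cor3'} from Corollary~\ref{Cor3} exactly as the text between the two statements advertises, namely by combining the pointwise bound \eqref{newtrivial} with symmetry and interpolation. First I would observe that, for any non-degenerate $3$-simplex $\Tr=\{0,v_1,v_2,v_3\}\subs\R^d$, each of the three sub-triangles $\Tr_{12}=\{0,v_1,v_2\}$, $\Tr_{13}=\{0,v_1,v_3\}$, $\Tr_{23}=\{0,v_2,v_3\}$ is itself non-degenerate (since $v_1,v_2,v_3$ are linearly independent, so is every pair), so Corollary~\ref{Cor3} applies to the associated bilinear operators $\mathcal{A}_*^{\Tr_{ij}}$. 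Then \eqref{newtrivial}, applied by freezing whichever of the three functions has the $L^\infty$ norm, gives the three pointwise estimates
\[
\mathcal{A}_*(f_1,f_2,f_3)(x)\le \|f_3\|_\infty\,\mathcal{A}_*^{\Tr_{12}}(f_1,f_2)(x),
\]
and similarly $\mathcal{A}_*(f_1,f_2,f_3)\le\|f_2\|_\infty\,\mathcal{A}_*^{\Tr_{13}}(f_1,f_3)$ and $\mathcal{A}_*(f_1,f_2,f_3)\le\|f_1\|_\infty\,\mathcal{A}_*^{\Tr_{23}}(f_2,f_3)$. (I should note that the definition of $\mathcal{A}_\la$ uses an $SO(d)$ average, so passing from the $3$-simplex operator to a sub-triangle operator requires only bounding the two omitted factors by their sup norms and noting that the marginal of Haar measure on the relevant pair of rotated vertices is again the $SO(d)$-average defining $\mathcal{A}_\la^{\Tr_{ij}}$; this is routine.)

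Next I would use Corollary~\ref{Cor3} on each of the three bilinear operators to get, in the first case, $\|\mathcal{A}_*(f_1,f_2,f_3)\|_s\le C\,\|f_1\|_{a_1}\|f_2\|_{a_2}\|f_3\|_\infty$ whenever $(1/a_1,1/a_2)\in(d-1)/d\cdot\widetilde{\mathcal C}_{2,q}$ and $1/s=1/a_1+1/a_2$, with $d\ge 2m$ and $s>q/2\cdot d/(d-1)$; and analogously for the other two pairings with one $L^\infty$ slot. This produces trilinear bounds at the three families of ``corner" exponent tuples where exactly one reciprocal is $0$. Now interpolation: the multilinear Riesz–Thorin / Stein interpolation theorem lets me interpolate among these three endpoint families (together with trivial bounds) to fill in the stated region. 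Concretely, given $(1/p_1,1/p_2,1/p_3)$ with each pair $(1/p_i,1/p_j)\in(d-1)/d\cdot\widetilde{\mathcal C}_{2,q}$, I want to write $(1/p_1,1/p_2,1/p_3)$ as a convex combination of points each of which lies in one of the three endpoint families (one coordinate zero, the other two in $(d-1)/d\cdot\widetilde{\mathcal C}_{2,q}$). Since $\widetilde{\mathcal C}_{2,q}$ is convex and contains a neighborhood of the origin within $[0,1)^2$, and since the hypothesis forces each $1/p_i<(d-1)/d$ strictly, such a decomposition exists — e.g. perturb by moving a small amount of mass off each coordinate in turn — and the resulting output exponent $r$ with $1/r=1/p_1+1/p_2+1/p_3$ satisfies $1/r$ at most the corresponding convex combination of the endpoint output reciprocals, all of which are $<2(d-1)/(qd)$, giving $r>q/2\cdot d/(d-1)$ as claimed.

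The main obstacle, and the only place needing genuine care, is the interpolation bookkeeping: verifying that the precise region $\{(1/p_1,1/p_2,1/p_3):(1/p_i,1/p_j)\in(d-1)/d\cdot\widetilde{\mathcal C}_{2,q}\ \forall i<j\}$ is contained in the convex hull of the three endpoint families, and that the induced range of $r$ is exactly $r>q/2\cdot d/(d-1)$ rather than something smaller. I would handle this by a direct convexity argument: the three endpoint families are the faces $\{x_1=0\}$, $\{x_2=0\}$, $\{x_3=0\}$ of a solid region (intersected with $(d-1)/d\cdot\widetilde{\mathcal C}_{2,q}$ in the surviving coordinates), their convex hull is a solid region containing all points whose every coordinate projection onto a $2$-plane lands in $(d-1)/d\cdot\widetilde{\mathcal C}_{2,q}$ and whose coordinates are not all ``too large at once," and the defining condition on $r$ (equivalently, on $1/p_1+1/p_2+1/p_3$) is the exact linear functional that is extremized on those faces; I would spell out the relevant inequalities $1/p_i+1/p_j<2(d-1)/(qd)$ coming from $(1/p_i,1/p_j)\in(d-1)/d\cdot\widetilde{\mathcal C}_{2,q}$ and check the claimed bound on $r$ follows. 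The symmetry of $\widetilde{\mathcal C}_{2,q}$ under coordinate permutation is what makes all three pairings usable simultaneously and the region symmetric, as stated. Everything else — the pointwise bounds and the single applications of Corollary~\ref{Cor3} — is immediate.
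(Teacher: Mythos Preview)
Your approach is exactly the one the paper indicates: it deduces Corollary~\ref{Cor3'} from Corollary~\ref{Cor3} via the pointwise inequality \eqref{newtrivial}, symmetry, and multilinear interpolation, and offers no further detail. Your sketch is more explicit than the paper's one-line remark and is essentially correct.

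One small correction to your bookkeeping: the condition $r>q/2\cdot d/(d-1)$ is a \emph{hypothesis} in the statement, not a consequence of the pairwise condition $(1/p_i,1/p_j)\in(d-1)/d\cdot\widetilde{\mathcal C}_{2,q}$. Indeed, the pairwise constraints alone only give $1/p_i+1/p_j<2(d-1)/(qd)$ for each pair, hence $1/p_1+1/p_2+1/p_3<3(d-1)/(qd)$, which is weaker than $2(d-1)/(qd)$; the point $(1/q,1/q,1/q)\cdot(d-1)/d$ satisfies all three pairwise conditions but lies outside the convex hull of the three coordinate-plane faces. So the region you must realize by interpolation is the one cut out by \emph{both} the pairwise conditions \emph{and} the sum constraint $1/p_1+1/p_2+1/p_3<2(d-1)/(qd)$, and that combined region is indeed contained in the convex hull of the three endpoint families. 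With this adjustment your argument goes through.
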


For a visualization of those $p_1,p_2,p_3$ for which Corollary \ref{Cor3'} gives boundedness for these bilinear maximal operators, see Figure \ref{fig2'} below.

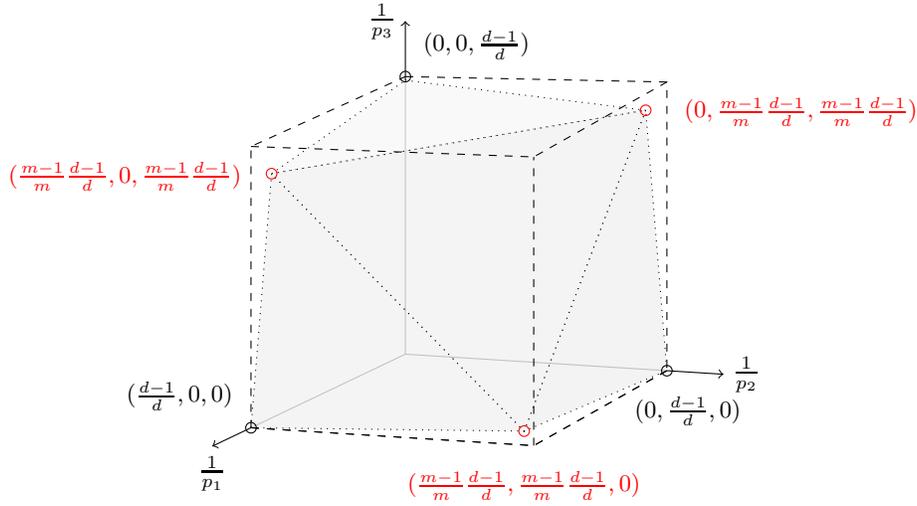
\begin{figure}[h]
\tdplotsetmaincoords{80}{110}
\begin{tikzpicture}[tdplot_main_coords]
\draw[->]  (0,0,0) -- (7.5,0,0) node[anchor=north]{$\frac{1}{p_1}$};
\draw
[->] (0,0,0) -- (0,4.5,0) node[anchor=west]{$\frac{1}{p_2}$};
\draw[->] (0,0,0) -- (0,0,4.5) node[anchor=east]{$\frac{1}{p_3}$};

\draw[fill=gray!6, fill opacity=0.8, style = dotted] (0,0,3.7) -- (0,3.4,3.5) -- (5.2,0,3.3) -- cycle;
\draw[fill=gray!10, fill opacity=0.8, style = dotted] (6,0,0) --  (5.2,0,3.3) -- (5,3.5,0) -- cycle;
\draw[fill=gray!12, fill opacity=0.8, style = dotted] (0,3.4,3.5) --  (5,3.5,0) -- (0,3.7,0) -- cycle;
\draw[fill=gray!10, fill opacity=0.8, style = dotted] (0,3.4,3.5) --  (5,3.5,0) --  (5.2,0,3.3) -- cycle;

\node [draw, circle, minimum width=4pt, inner sep=0pt] at (6,0,0) {};


\node[above left=2pt of {(6,0,0)}, outer sep=2pt] {\small$(\frac{d-1}{d},0,0)$};
\node [draw, color=red, circle, minimum width=4pt, inner sep=0pt] at (5,3.5,0) {};
\node[color=red, below=4pt of {(5,3.5,-0.2)}, outer sep=2pt] {\small$(\frac{m-1}{m}\frac{d-1}{d},\frac{m-1}{m}\frac{d-1}{d},0)$};

\node [draw, color=red, circle, minimum width=4pt, inner sep=0pt] at (5.2,0,3.3) {};
\node[color=red, left=4pt of {(5.4,0,3.3)}, outer sep=2pt] {\small$(\frac{m-1}{m}\frac{d-1}{d},0,\frac{m-1}{m}\frac{d-1}{d})$};

\node [draw, circle, minimum width=4pt, inner sep=0pt] at (0,0,3.75) {};
\node[above right=2pt of {(0,0,3.75)}, outer sep=2pt] {\small$(0,0,\frac{d-1}{d})$};

\node [draw, color=red, circle, minimum width=4pt, inner sep=0pt] at (0,3.4,3.5) {};
\node[color=red, right=4pt of {(0,3.6,3.5)}, outer sep=2pt] {\small$(0,\frac{m-1}{m}\frac{d-1}{d},\frac{m-1}{m}\frac{d-1}{d})$};

\node [draw, circle, minimum width=4pt, inner sep=0pt] at (0,3.7,0) {};

\node[below =2pt of {(0,4,0)}, outer sep=4pt] {\small$(0,\frac{d-1}{d},0)$};

\draw[color=black, style=dashed] (6,4,0) -- (6,4,3.9);

\draw[color=black, style=dashed] (6,0,0) -- (6,4,0) -- (0,3.7,0);
\draw[color=black, style=dashed] (6,0,0) -- (6,0,3.8) -- (0,0,3.75);
\draw[color=black, style=dashed] (0,0,3.75) -- (0,3.7,3.9) -- (0,3.7,0);
\draw[color=black, style=dashed] (6,0,0) -- (6,4,0) -- (0,3.7,0);
\draw[color=black, style=dashed] (6,0,3.8) -- (6,4,3.9) -- (0,3.7,3.9);

\end{tikzpicture}
\caption{Region of boundedness obtained in Corollary \ref{Cor3'} for Trilinear Maximal Operator in $\mathbb{R}^d$, provided $d\geq2m$.}\label{fig2'}
\end{figure}

\bigskip

Before we can state our generalization of Corollary \ref{Cor3} to $k$-simplices, which will in particular also strengthen Corollary \ref{Cor3'} for $3$-simplices in higher dimensions, we define inductively, for each integer $k\geq2$ and $1<q\leq2$ a new symmetric convex region $\widetilde{\mathcal{C}}_{k,q}\subseteq[0,1]^k$.

We start with $\widetilde{\mathcal{C}}_{2,q}\subseteq[0,1]^2$ as defined immediately after Theorem \ref{Thm3} above. Then, for each integer $k\geq3$ we define $\widetilde{\mathcal{C}}_{k,q}$ to be the convex hull of the points $(x_1,\dots,x_k)\in[0,1]^k$ such that $\pi_j(x_1,\dots,x_k)\in \widetilde{\mathcal{C}}_{k-1,q}$ for each $1\leq j\leq k$, where $\pi_j$ denotes the projection onto the coordinate hyperplane $x_j=0$, with the region $\mathcal{C}_{k,q}$ as defined in Section \ref{last}.

\begin{thm}\label{Thm3'} 
Let $k\in\N$ and $\Tr=\{v_0=0,v_1,\ldots,v_k\}\subs\R^d$ be a non-degenerate $k$-simplex.

Let $m\geq2$ be an integer and set $q=m/(m-1)$. 
If $d\geq mk$, then for any \[r>(q^{-1}+q^{-2}+\cdots+q^{-(k-1)}+q^{-(k-1)})^{-1}\,d/(d-1)\] and $p_1,\dots,p_k$ whose reciprocals satisfy both $(1/p_1,\dots,1/p_k)\in (d-1)/d\cdot\widetilde{\mathcal{C}}_{k,q}$ and $1/r=1/p_1+\cdots+1/p_k$, 
one has the estimate
\bee
\|\mathcal{A}_\ast(f_1,\ldots,f_k)\|_r \leq C_{d,m,\Tr}\, \|f_1\|_{p_1}\cdots \|f_k\|_{p_k}.
\eee
 \end{thm}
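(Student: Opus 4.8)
The plan is to prove Theorem \ref{Thm3'} by induction on $k$, using Theorem \ref{Thm3} (the bilinear case) as the base case $k=2$ and mirroring in the continuous setting the pointwise ``splitting'' strategy behind Propositions \ref{Propn1} and \ref{Propn2}. The engine is a pointwise bound for $\mathcal{A}_\ast(f_1,\dots,f_k)$ that peels off one vertex at a time. Fix $\lm>0$ and write, grouping the first $k-1$ vertices,
\[
\mathcal{A}_\lm(f_1,\dots,f_k)(x)=\int_{SO(d)} \Bigl(\prod_{i=1}^{k-1} f_i(x+\lm U v_i)\Bigr)\, f_k(x+\lm U v_k)\, d\mu(U).
\]
The orbit of $U\mapsto Uv_k$ under the stabilizer in $SO(d)$ of the span of $v_1,\dots,v_{k-1}$ is a sphere of dimension $d-k$ inside the affine space determined by the inner-product constraints $v_i\cdot v_k=t_{ik}$; integrating out that inner sphere and applying Hölder with exponents $m/(m-1)$ and $m$ (rather than Cauchy--Schwarz) should yield, in dimensions $d\geq mk$ (so that the relevant sphere $S^{d-k}$ is non-degenerate and Stein's theorem applies on it with the borderline exponent $(d-k)/(d-k-1)\le q\,d/(d-1)$ when $d\ge mk$), a bound of the form
\[
\mathcal{A}_\ast(f_1,\dots,f_k)(x)\ \leq\ C_{d,m,\Tr}\ \mathcal{A}_\ast(|f_1|^q,\dots,|f_{k-1}|^q)(x)^{1/q}\ \mathcal{A}_\ast(|f_k|^{q'})(x)^{1/q'},
\]
where on the first factor $\mathcal{A}_\ast$ now denotes the multilinear maximal operator for the $(k-1)$-simplex $\{0,v_1,\dots,v_{k-1}\}$, and $q'$ is the exponent ($q$ or a power of $q$) dictated by the number of Hölder applications. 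Iterating this down to $k=2$ produces exactly the product of single spherical maximal functions $\mathcal{A}_\ast(|f_j|^{q^{a_j}})^{1/q^{a_j}}$ with the exponents $a_j$ recorded in the definition of $\widetilde{\mathcal{C}}_{k,q}$: the two ``latest-split'' functions carry exponent $q^{k-1}$ and the remaining ones carry $q^{k+1-j}$, which is precisely the arithmetic behind the condition $x_1+\cdots+x_k<q^{-1}+\cdots+q^{-(k-1)}+q^{-(k-1)}$ and the sub-constraints $y_1+\cdots+y_j<q^{-1}+\cdots+q^{-j}$.

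With the pointwise estimate in hand, Theorem \ref{Thm3'} follows by Hölder's inequality in $x$ together with Stein's spherical maximal theorem \eqref{2.3'}: each factor $\|\mathcal{A}_\ast(|f_j|^{q^{a_j}})^{1/q^{a_j}}\|$ is controlled by $\|f_j\|_{p_j}$ as soon as $p_j>q^{a_j}\,d/(d-1)$, i.e. exactly when $(1/p_1,\dots,1/p_k)$ lies in the open region $(d-1)/d\cdot\mathcal{C}_{k,q}$, and the summ of exponents gives $1/r=1/p_1+\cdots+1/p_k$ with the stated lower bound on $r$. Finally, one upgrades $\mathcal{C}_{k,q}$ to its convex hull $\widetilde{\mathcal{C}}_{k,q}$ with the lower-dimensional faces: on each coordinate hyperplane $x_j=0$ the inductive hypothesis (Theorem \ref{Thm3'} for $k-1$, i.e. $\widetilde{\mathcal{C}}_{k-1,q}$) applies after using the trivial bound \eqref{newtrivial} to drop $f_j$ at the cost of $\|f_j\|_\infty$; multilinear interpolation between these boundary estimates and the interior estimate on $\mathcal{C}_{k,q}$ fills in the convex hull. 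Symmetry in the roles of $v_1,\dots,v_k$ lets us symmetrize the whole region.

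I expect the main obstacle to be the first step: carefully setting up the fibration of $SO(d)$ over the configuration of the first $k-1$ vertices and verifying that, after integrating out the fiber, the remaining integral in $f_k$ is genuinely an average over a $(d-k)$-dimensional sphere with the \emph{uniform} (Haar) density, so that Stein's theorem applies with constants independent of the simplex. One must track how the Jacobian of this disintegration depends on $\Tr$ and check that the borderline Lebesgue exponent produced on $S^{d-k}$ is $\le q\,d/(d-1)$ precisely under $d\ge mk$; getting this bookkeeping right — and making the Hölder step produce exactly the exponents $q^{k-1},q^{k-1},q^{k-2},\dots$ rather than something weaker — is where the real work lies. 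The interpolation/convex-hull step and the reduction to Stein's maximal theorem are, by contrast, routine once the pointwise inequality is established. (These are the details the paper defers to Section \ref{general}.)
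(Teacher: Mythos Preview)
Your overall architecture matches the paper exactly: induction on $k$ with Theorem~\ref{Thm3} as base case, a pointwise inequality
\[
\mathcal{A}_\ast(f_1,\ldots,f_k)(x)\le C_{d,m,\Tr}\,\mathcal{A}_\ast(|f_1|^q,\ldots,|f_{k-1}|^q)(x)^{1/q}\,\mathcal{A}_\ast(|f_k|^q)(x)^{1/q},
\]
then H\"older, Stein's theorem \eqref{2.3'}, symmetry and interpolation to get $(d-1)/d\cdot\mathcal{C}_{k,q}$, and finally interpolation with the estimates coming from \eqref{newtrivial} and the inductive hypothesis to pass to $(d-1)/d\cdot\widetilde{\mathcal{C}}_{k,q}$. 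That part is fine.

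The gap is in your mechanism for the pointwise inequality, and in your explanation of where $d\ge mk$ comes from. After Fubini and one application of H\"older in the $\uy_1=(y_1,\ldots,y_{k-1})$ variable you do get the factor $\mathcal{A}_\ast(|f_1|^q,\ldots,|f_{k-1}|^q)^{1/q}$, but the remaining factor is not a spherical average of $|f_k|^q$: it is
\[
\mathcal{B}_\la(x)=\int_{\uy_1}\Bigl(\int |f_k(x+\la y_k)|\,d\om_{\FF_{\uy_1}}(y_k)\Bigr)^{m} d\om_{\F_1}(\uy_1),
\]
an $m$-fold product in $f_k$. Expanding the $m$-th power and switching the order of integration produces an integral over $(z_1,\ldots,z_m)$ on the sphere $|z|^2=t_{kk}$ against a weight $w(z_1,\ldots,z_m)=\int d\om_{\FF_{\uz}}(\uy_1)$; a \emph{second} H\"older then gives $\mathcal{B}_\la\le \mathcal{A}_\ast(|f_k|^q)^{m/q}\cdot(\int w^m)^{1/m}$. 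The entire content of the dimension restriction is the finiteness of $\int w(z_1,\ldots,z_m)^m\,d\si_k(z_1)\cdots d\si_k(z_m)$, which the paper obtains from two geometric lemmas: $w(\uz)\le C\,\text{vol}(z_1,\ldots,z_m)^{-(k-1)}$ (valid for $d\ge m+k$), and $\int \text{vol}(\uz)^{-s}\,d\si_k^{\otimes m}<\infty$ for $d\ge m+s$. With $s=m(k-1)$ this gives exactly $d\ge mk$.

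So Stein's theorem never enters the pointwise step, and never on a lower-dimensional sphere; your parenthetical about $(d-k)/(d-k-1)\le q\,d/(d-1)$ is a red herring. The ``main obstacle'' you flag---uniformity of the fiber measure---is real but is handled by the Gelfand--Leray formalism (Section~\ref{appendix}); what you are actually missing is the weight function $w$ and the volume estimates that control $\int w^m$.
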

 
 For a visualization of those $p_1,\dots,p_k$ for which Theorem \ref{Thm3'} gives boundedness for these multilinear maximal operators, see Figure \ref{fig2} below.
 
 As with Theorem \ref{Thm2}, we note that Theorem \ref{Thm3'} is of particular interest as $m\to\infty$ (and hence $d\to\infty$) for fixed $k$, since this corresponds to $q\to1$ through values of the form $m/(m-1)$ with $m\in \N$.
 
\begin{figure}[h]
\tdplotsetmaincoords{80}{110}
\begin{tikzpicture}[tdplot_main_coords]
\draw[->]  (0,0,0) -- (7.5,0,0) node[anchor=north]{$\frac{1}{p_1}$};
\draw
[->] (0,0,0) -- (0,4.5,0) node[anchor=west]{$\frac{1}{p_2}$};
\draw[->] (0,0,0) -- (0,0,4.5) node[anchor=east]{$\frac{1}{p_3}$};


\draw[fill=gray!17, fill opacity=0.8, style = dotted]  (5,3.5,0) -- (5,3,2.8) -- (4.4,3.55,2.80) --  (5,3.5,0) -- cycle;
\draw[fill=gray!10, fill opacity=0.8, style = dotted] (5,3,2.8)  -- (4.8,3.2,3.55) -- (5.2,0,3.3) -- (5.2,0,3.3) -- cycle;
\draw[fill=red!17, fill opacity=0.8, style = dotted] (4.4,3.55,2.80) -- (5,3,2.8) -- (4.8,3.2,3.55) -- cycle;
\draw[fill=gray!15, fill opacity=0.8, style = dotted] (4.8,3.2,3.55) -- (4.4,3.55,2.80) -- (0,3.4,3.5) -- cycle;



\draw[fill=gray!15, fill opacity=0.8, style = dotted] (6,0,0) -- (5,3.5,0)  -- (5,3,2.8) -- (6,0,0) -- cycle;
\draw[fill=gray!10, fill opacity=0.8, style = dotted] (6,0,0) -- (5,3,2.8)  -- (5.2,0,3.3) -- (6,0,0) -- cycle;
\draw[fill=gray!17, fill opacity=0.8, style = dotted]  (4.4,3.55,2.80) --  (5,3.5,0) -- (0,3.7,0) -- cycle;
\draw[fill=gray!15, fill opacity=0.8, style = dotted]  (4.4,3.55,2.80) -- (0,3.4,3.5)  -- (0,3.7,0) -- cycle;
\draw[fill=gray!10, fill opacity=0.8, style = dotted] (0,0,3.75) -- (4.8,3.2,3.55) -- (0,3.4,3.5) -- cycle;
\draw[fill=gray!8, fill opacity=0.8, style = dotted] (0,0,3.75) -- (4.8,3.2,3.55) -- (5.2,0,3.3) -- cycle;

\node [draw, circle, minimum width=4pt, inner sep=0pt] at (6,0,0) {};


\node[above left=2pt of {(6,0,0)}, outer sep=2pt] {\small$(\frac{d-1}{d},0,0)$};
\node [draw, color=red, circle, minimum width=4pt, inner sep=0pt] at (5,3.5,0) {};
\node[color=red, below=4pt of {(5,3.5,-0.2)}, outer sep=2pt] {\small$(\frac{m-1}{m}\frac{d-1}{d},\frac{m-1}{m}\frac{d-1}{d},0)$};

\node [draw, color=red, circle, minimum width=4pt, inner sep=0pt] at (5,3,2.8) {};
\node[color=red, left=2pt of {(5,3,2.8)}, outer sep=2pt] {\small$\vec{x}_1$};



\node [draw, color=red, circle, minimum width=4pt, inner sep=0pt] at (4.4,3.55,2.80) {};
\node[color=red, right=2pt of {(4.4,3.55,2.80)}, outer sep=2pt] {\small$\vec{x}_2$};

\node [draw, color=red, circle, minimum width=4pt, inner sep=0pt] at (5.2,0,3.3) {};
\node[color=red, left=4pt of {(5.4,0,3.3)}, outer sep=2pt] {\small$(\frac{m-1}{m}\frac{d-1}{d},0,\frac{m-1}{m}\frac{d-1}{d})$};


\node [draw, circle, minimum width=4pt, inner sep=0pt] at (0,0,3.75) {};
\node[above right=2pt of {(0,0,3.75)}, outer sep=2pt] {\small$(0,0,\frac{d-1}{d})$};

\node [draw, color=red, circle, minimum width=4pt, inner sep=0pt] at (4.8,3.2,3.55) {};
\node[color=red, above=2pt of {(4.8,3.2,3.55)}, outer sep=2pt] {\small$\vec{x}_3$};

\node [draw, color=red, circle, minimum width=4pt, inner sep=0pt] at (0,3.4,3.5) {};
\node[color=red, right=4pt of {(0,3.6,3.5)}, outer sep=2pt] {\small$(0,\frac{m-1}{m}\frac{d-1}{d},\frac{m-1}{m}\frac{d-1}{d})$};

\node [draw, circle, minimum width=4pt, inner sep=0pt] at (0,3.7,0) {};

\node[below =2pt of {(0,4,0)}, outer sep=4pt] {\small$(0,\frac{d-1}{d},0)$};

\draw[color=black, style=dashed] (6,4,0) -- (6,4,3.9);

\draw[color=black, style=dashed] (6,0,0) -- (6,4,0) -- (0,3.7,0);
\draw[color=black, style=dashed] (6,0,0) -- (6,0,3.8) -- (0,0,3.75);
\draw[color=black, style=dashed] (0,0,3.75) -- (0,3.7,3.9) -- (0,3.7,0);
\draw[color=black, style=dashed] (6,0,0) -- (6,4,0) -- (0,3.7,0);
\draw[color=black, style=dashed] (6,0,3.8) -- (6,4,3.9) -- (0,3.7,3.9);

\end{tikzpicture}
\smallskip
\caption*{\textcolor{red}{$\vec{x}_1=(\frac{m-1}{m}\frac{d-1}{d}, (\frac{m-1}{m})^2\frac{d-1}{d}, (\frac{m-1}{m})^2\frac{d-1}{d})$}}
\caption*{\textcolor{red}{$\vec{x}_2=( (\frac{m-1}{m})^2\frac{d-1}{d}, \frac{m-1}{m}\frac{d-1}{d},(\frac{m-1}{m})^2\frac{d-1}{d})$}}
\caption*{\textcolor{red}{$\vec{x}_3=( (\frac{m-1}{m})^2\frac{d-1}{d}, (\frac{m-1}{m})^2\frac{d-1}{d},\frac{m-1}{m}\frac{d-1}{d})$
}}
\caption{Region of boundedness we obtain for Trilinear Maximal Operator in $\mathbb{R}^d$, provided $d\geq3m$.}\label{fig2}
\end{figure}

\comment{
\begin{figure}[h]
\tdplotsetmaincoords{80}{110}
\begin{tikzpicture}[tdplot_main_coords]
\draw[->]  (0,0,0) -- (7.5,0,0) node[anchor=north]{$\frac{1}{p_1}$};
\draw
[->] (0,0,0) -- (0,4.5,0) node[anchor=west]{$\frac{1}{p_2}$};
\draw[->] (0,0,0) -- (0,0,4.5) node[anchor=east]{$\frac{1}{p_3}$};


\draw[fill=gray!15, fill opacity=0.0, style = dotted] (5,0,0) -- (5,3,0)  -- (5,3,2.8) -- (5,0,2.75) -- cycle;
\draw[fill=gray!20, fill opacity=0.8, style = dotted]  (5,3,0) -- (5,3,2.8) -- (4.4,3.55,2.80) --  (4.4,3.55,0) -- cycle;
\draw[fill=gray!15, fill opacity=0.0, style = dotted]  (4.4,3.55,2.80) --  (4.4,3.55,0) -- (0,3.4,0) -- (0,3.4,2.9)  -- cycle;
\draw[fill=gray!10, fill opacity=0.8, style = dotted] (5,3,2.8)  -- (4.8,3.2,3.55) -- (4.4,0,3.4) -- (5,0,2.75) -- cycle;
\draw[fill=red!20, fill opacity=0.8, style = dotted] (4.4,3.55,2.80) -- (5,3,2.8) -- (4.8,3.2,3.55) -- cycle;
\draw[fill=gray!15, fill opacity=0.8, style = dotted] (4.8,3.2,3.55) -- (4.4,3.55,2.80) -- (0,3.4,2.9) -- (0,2.9,3.5) -- cycle;

\draw[fill=gray!10, fill opacity=0.0, style = dotted] (0,0,3.4) -- (0,2.9,3.5) -- (4.8,3.2,3.55) --(4.4,0,3.4) -- cycle;

\node [draw,circle, minimum width=4pt, inner sep=0pt] at (5,0,0) {};
\node [draw,   circle, minimum width=4pt, inner sep=0pt] at (0,0,3.4) {};
\node [draw,  circle, minimum width=4pt, inner sep=0pt] at (0,3.4,0) {};

\draw[fill=gray!15, fill opacity=0.8, style = dotted] (6,0,0) -- (5,3,0)  -- (5,3,2.8) -- (6,0,0) -- cycle;
\draw[fill=gray!10, fill opacity=0.8, style = dotted] (6,0,0) -- (5,3,2.8)  -- (5,0,2.75) -- (6,0,0) -- cycle;
\draw[fill=gray!20, fill opacity=0.8, style = dotted]  (4.4,3.55,2.80) --  (4.4,3.55,0) -- (0,3.7,0) -- cycle;
\draw[fill=gray!15, fill opacity=0.8, style = dotted]  (4.4,3.55,2.80) -- (0,3.4,2.9)  -- (0,3.7,0) -- cycle;
\draw[fill=gray!10, fill opacity=0.8, style = dotted] (0,0,3.75) -- (4.8,3.2,3.55) -- (0,2.9,3.5) -- cycle;
\draw[fill=gray!8, fill opacity=0.8, style = dotted] (0,0,3.75) -- (4.8,3.2,3.55) -- (4.4,0,3.4) -- cycle;

\node [draw, circle, minimum width=4pt, inner sep=0pt] at (6,0,0) {};


\node[above left=2pt of {(6,0,0)}, outer sep=2pt] {\small$(\frac{d-1}{d},0,0)$};
\node [draw, circle, minimum width=4pt, inner sep=0pt] at (5,3,0) {};
\node [draw, color=red, circle, minimum width=4pt, inner sep=0pt] at (5,3,2.8) {};
\node[color=red, left=2pt of {(5,3,2.8)}, outer sep=2pt] {\small$\vec{x}_1$};

\node [draw, circle, minimum width=4pt, inner sep=0pt] at (4.4,3.55,0) {};

\node [draw, circle, minimum width=4pt, inner sep=0pt] at (5,0,2.75) {};

\node [draw, color=red, circle, minimum width=4pt, inner sep=0pt] at (4.4,3.55,2.80) {};
\node[color=red, right=2pt of {(4.4,3.55,2.80)}, outer sep=2pt] {\small$\vec{x}_2$};

\node [draw, circle, minimum width=4pt, inner sep=0pt] at (4.4,0,3.4) {};


\node [draw, circle, minimum width=4pt, inner sep=0pt] at (0,0,3.75) {};
\node[above right=2pt of {(0,0,3.75)}, outer sep=2pt] {\small$(0,0,\frac{d-1}{d})$};

\node [draw, color=red, circle, minimum width=4pt, inner sep=0pt] at (4.8,3.2,3.55) {};
\node[color=red, above=2pt of {(4.8,3.2,3.55)}, outer sep=2pt] {\small$\vec{x}_3$};

\node [draw, circle, minimum width=4pt, inner sep=0pt] at (0,3.4,2.9) {};
\node [draw, circle, minimum width=4pt, inner sep=0pt] at (0,2.9,3.5) {};

\node [draw, circle, minimum width=4pt, inner sep=0pt] at (0,3.7,0) {};

\node[below =2pt of {(0,4,0)}, outer sep=4pt] {\small$(0,\frac{d-1}{d},0)$};

\draw[color=black, style=dashed] (6,4,0) -- (6,4,3.9);

\draw[color=black, style=dashed] (6,0,0) -- (6,4,0) -- (0,3.7,0);
\draw[color=black, style=dashed] (6,0,0) -- (6,0,3.8) -- (0,0,3.75);
\draw[color=black, style=dashed] (0,0,3.75) -- (0,3.7,3.9) -- (0,3.7,0);
\draw[color=black, style=dashed] (6,0,0) -- (6,4,0) -- (0,3.7,0);
\draw[color=black, style=dashed] (6,0,3.8) -- (6,4,3.9) -- (0,3.7,3.9);

\end{tikzpicture}
\caption*{\textcolor{red}{$\vec{x}_1=(\frac{m-1}{m}\frac{d-1}{d}, (\frac{m-1}{m})^2\frac{d-1}{d}, (\frac{m-1}{m})^2\frac{d-1}{d})$}}
\caption*{\textcolor{red}{$\vec{x}_2=( (\frac{m-1}{m})^2\frac{d-1}{d}, \frac{m-1}{m}\frac{d-1}{d},(\frac{m-1}{m})^2\frac{d-1}{d})$}}
\caption*{\textcolor{red}{$\vec{x}_3=( (\frac{m-1}{m})^2\frac{d-1}{d}, (\frac{m-1}{m})^2\frac{d-1}{d},\frac{m-1}{m}\frac{d-1}{d})$
}}
\caption{Region of boundedness we obtain for Trilinear Maximal Operator in $\mathbb{R}^d$, provided $d\geq3m$.}\label{fig2}
\end{figure}
}

\subsection{Proof of Theorem \ref{Thm3'}}

As in the discrete case, the crucial ingredient in our proof of Theorem \ref{Thm3'} is pointwise estimates for  $\mathcal{A}_\ast(f_1,\dots f_k)$ in terms of the spherical maximal operator applied to appropriate powers of the functions $|f_j|$, specifically

\begin{propn}\label{finalprop} Let $k,m\geq 2$ be integers and $\Tr=\{v_0=0,v_1,\ldots,v_k\}\subs\R^d$ be a non-degenerate $k$-simplex with $d\geq km$.
Then for any $f_1,\ldots,f_k:\R^d\to\R$, one has
\bee
\mathcal{A}_\ast(f_1,\ldots,f_k)(x)\leq C_{d,m,\Delta}\,\mathcal{A}_\ast(|f_1|^q,\ldots,|f_{k-1}|^q)(x)^{1/q}\,\mathcal{A}_\ast (|f_k|^q)(x)^{1/q}
\eee
and hence
\bee
\mathcal{A}_\ast(f_1,\ldots,f_k)(x)\leq C_{d,m,\Delta}\,\mathcal{A}_\ast(|f_1|^{q^{k-1}})(x)^{1/q^{k-1}}\mathcal{A}_\ast(|f_2|^{q^{k-1}})(x)^{1/q^{k-1}}\prod_{j=3}^k \mathcal{A}_\ast(|f_j|^{q^{k+1-j}})(x)^{1/q^{k+1-j}}
\eee
uniformly for $x\in\R^d$, where $q=m/(m-1)$. 
\end{propn}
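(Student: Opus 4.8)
The plan is to follow the structure of the proof of Proposition~\ref{Propn1} (and its $m$-fold refinement in Proposition~\ref{Propn2}), with the Cauchy--Schwarz steps there replaced by a single application of H\"older's inequality with the conjugate exponents $q$ and $m$, and with the integer-point count \eqref{3.2'} replaced by a surface-measure (volume) computation on the relevant configuration variety. The first of the two displayed inequalities is the substantive one; the second follows from it by a downward recursion on the number of vertices --- apply the first inequality to $\Tr$, then to the face $\{v_0,\dots,v_{k-1}\}$, then to $\{v_0,\dots,v_{k-2}\}$, and so on, which is legitimate since $d\ge km\ge jm$ for $2\le j\le k$; for a $1$-simplex the operator is the spherical maximal operator and Jensen's inequality supplies the remaining $q$-th powers --- tracking the exponents exactly as in the passage from \eqref{4.9} to \eqref{4.9'}. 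So one focuses on the first inequality, fixes $\lm>0$ and $x\in\R^d$, and assumes $f_1,\dots,f_k\ge0$.

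The first step is to disintegrate the Haar measure $\mu$ over the position $y_k=\lm\,U(v_k)$ of the last vertex. This $y_k$ is uniform on $S^{d-1}(0,\lm|v_k|)$, and conditionally on it the first $k-1$ vertices form a copy of a fixed non-degenerate $(k-1)$-simplex: with $\widehat{y}_k:=y_k/|y_k|$ one has $y_i=\lm(t_{ik}/|v_k|)\,\widehat{y}_k+\lm\,V(\tilde v_i)$, where $V$ is Haar on the stabiliser $SO(d-1)$ of $\widehat{y}_k$ and the $\tilde v_i$ are fixed vectors whose inner product matrix is the Schur complement $\tilde T=(t_{ij}-t_{ik}t_{jk}/t_{kk})_{i,j<k}$ of $t_{kk}=|v_k|^2$ in $T$ (positive definite, so the simplex is non-degenerate as $d\ge k$). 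Thus
\bee
\mathcal{A}_\lm(f_1,\dots,f_k)(x)=\int_{S^{d-1}(0,\lm|v_k|)} f_k(x+y_k)\,\mathcal{B}_\lm(f_1,\dots,f_{k-1})(x,y_k)\,d\sigma(y_k),
\eee
where $\mathcal{B}_\lm(f_1,\dots,f_{k-1})(x,y_k)$ is the conditional $(k-1)$-linear average just described; note that $x+\lm w_i$ with $w_i=(t_{ik}/|v_k|)\widehat{y}_k+V(\tilde v_i)$ runs over genuine copies of $\lm\{v_0,\dots,v_{k-1}\}$ (the $w_i$ have inner product matrix $T_1$), and integrating $\mathcal{B}_\lm$ in $y_k$ returns the full $(k-1)$-linear average $\mathcal{A}_\lm(f_1,\dots,f_{k-1})(x)$. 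Applying H\"older's inequality in the $y_k$-integral with exponents $q$ and $m$, placing the exponent $q$ on $f_k$ --- this is the crux, since placing the larger exponent $m$ there would only produce $\mathcal{A}_\ast(|f_k|^m)^{1/m}$ --- and then Jensen's inequality to move the power $q$ inside $\mathcal{B}_\lm$, and writing $g_i=|f_i|^q$, the first factor is $\le\mathcal{A}_\ast(|f_k|^q)(x)^{1/q}$, and since $(m-1)/m=1/q$ matters are reduced to the key estimate
\bee
\int_{S^{d-1}(0,\lm|v_k|)}\mathcal{B}_\lm(g_1,\dots,g_{k-1})(x,y_k)^{\,m-1}\,d\sigma(y_k)\ \le\ C_{d,m,\Tr}\,\big(\mathcal{A}_\ast(g_1,\dots,g_{k-1})(x)\big)^{m-1}
\eee
for $g_i\ge0$, uniformly in $x$ and $\lm$, with $\mathcal{A}_\ast$ on the right the $(k-1)$-linear operator associated to $\{v_0,\dots,v_{k-1}\}$.

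This key estimate is the continuous analogue of the counting step in Proposition~\ref{Propn1}, and is where the hypothesis $d\ge km$ is used. Expanding the $(m-1)$st power and interchanging integrals, its left side equals $\int\prod_{l=1}^{m-1}\prod_{i=1}^{k-1} g_i(x+y_i^{(l)})\,d\nu_\lm$, where $\nu_\lm$ is the law on $(\R^{d(k-1)})^{m-1}$ obtained by choosing $y_k$ uniform on $S^{d-1}(0,\lm|v_k|)$ and then, conditionally and independently for $l=1,\dots,m-1$, a copy $(y_1^{(l)},\dots,y_{k-1}^{(l)})$ of $\lm\{v_0,\dots,v_{k-1}\}$ satisfying the constraints $y_i^{(l)}\cdot\widehat{y}_k=\lm t_{ik}/|v_k|$. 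By the rotational symmetry of Haar measure, each one-copy marginal of $\nu_\lm$ is exactly the push-forward $\mathcal{M}_\lm$ of $\mu$ under $U\mapsto(\lm U v_1,\dots,\lm U v_{k-1})$, i.e.\ the measure averaged by $\mathcal{A}_\lm(g_1,\dots,g_{k-1})$, so it suffices to establish the domination $\nu_\lm\le C_{d,m,\Tr}\,\mathcal{M}_\lm^{\otimes(m-1)}$, i.e.\ that the $m-1$ copies are at most boundedly more correlated than if independent. For this one disintegrates once more over the shared axis $\widehat{y}_k$ and computes the Radon--Nikodym density of $\nu_\lm$ against $\mathcal{M}_\lm^{\otimes(m-1)}$: at a generic (affinely non-degenerate) point it equals, up to bounded Jacobian factors coming from the projections onto and off $\operatorname{span}\{y_i^{(l)}:l,i\}$, the surface measure of the sphere of admissible unit axes $\widehat{y}_k$, which by the Gram-determinant formula for such configuration varieties (the continuous counterpart of \eqref{2.5}) is a power $\det(\,\cdot\,)^{(d-km)/2}$ of a quantity bounded above --- hence bounded precisely when $d\ge km$. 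The affinely degenerate configurations (where $\{y_i^{(l)}:l,i\}$ spans a proper subspace) are treated separately as in the Case~1/Case~2 dichotomy of Proposition~\ref{Propn1}: only boundedly many combinatorial types occur, each governed by the same determinant formula in a smaller ambient dimension, and each again costs at most a constant multiple of $\mathcal{M}_\lm^{\otimes(m-1)}$ under $d\ge km$.

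Granting the key estimate, combining the three displays and taking the supremum over $\lm$ yields the first inequality of the Proposition, and with it the second as explained above. The only genuinely delicate point, and the one I expect to be the main obstacle, is the domination $\nu_\lm\le C_{d,m,\Tr}\,\mathcal{M}_\lm^{\otimes(m-1)}$: carrying out the two-step disintegration and the surface-measure/Jacobian bookkeeping carefully enough to confirm that $d\ge km$ is exactly the right threshold and that the degenerate strata contribute negligibly. Everything else is H\"older's inequality, Jensen's inequality, and the rotational symmetry of the Haar measure.
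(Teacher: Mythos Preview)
Your reduction is clean, and for $m=2$ it actually works: after Cauchy--Schwarz in $y_k$ and Jensen, the ``key estimate'' has exponent $m-1=1$ and is simply the identity $\int \mathcal{B}_\lm(g)(x,y_k)\,d\sigma(y_k)=\mathcal{A}_\lm(g)(x)$. So for $m=2$ you recover the first displayed inequality with no effort and with no dimensional restriction beyond $d\ge k+1$.

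For $m\ge 3$, however, the key estimate---equivalently the measure domination $\nu_\lm\le C\,\mathcal{M}_\lm^{\otimes(m-1)}$---is \emph{false}, regardless of how large $d$ is. Take $k=2$, $m=3$ and let $g_1$ be (a smooth approximation to) the indicator of a ball of radius $\eps$ about a fixed point $x+\lm y_1^\ast$ on the relevant sphere. Then $\mathcal{B}_\lm(g_1)(x,y_2)$ is of order $\eps^{d-2}$ precisely for those $y_2$ with $y_2\cdot y_1^\ast$ within $O(\eps)$ of the required value, a set of $\sigma$-measure $\asymp\eps$; hence the left side of your key estimate is $\asymp \eps\cdot\eps^{2(d-2)}=\eps^{2d-3}$. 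But $\mathcal{A}_\ast(g_1)(x)\asymp\eps^{d-1}$, so the right side is $\asymp\eps^{2d-2}$, and the ratio blows up like $\eps^{-1}$. In general the ratio is $\asymp\eps^{-(k-1)(m-2)}$. The point is that the Radon--Nikodym density $d\nu_\lm/d\mathcal{M}_\lm^{\otimes(m-1)}$ is \emph{not} bounded: near the ``diagonal'' where several of the $\underline{y}_1^{(l)}$ coincide it blows up like a negative power of the corresponding Gram determinant, and the Jacobian factors you dismissed as bounded are exactly where this singularity hides. Since your test functions $\prod_l\prod_i g_i(y_i^{(l)})$ can concentrate on that diagonal (take all $g_i$ bump functions), the degenerate strata cannot be ``treated separately''---they are the whole problem.

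The paper avoids this by applying H\"older in the \emph{opposite} variable: integrate first in $\underline{y}_1=(y_1,\dots,y_{k-1})$, placing the exponent $q$ on $\prod_{i<k}f_i$. This produces $\mathcal{A}_\ast(|f_1|^q,\dots,|f_{k-1}|^q)^{1/q}$ \emph{directly}, with no further estimate needed, and leaves $\bigl(\int(\int|f_k|\,d\omega_{\underline{y}_1}(y_k))^m\,d\omega(\underline{y}_1)\bigr)^{1/m}$. Expanding the $m$th power introduces $m$ copies $z_1,\dots,z_m$ of the \emph{single} variable $y_k$ and a weight $w(z_1,\dots,z_m)$; a second H\"older (exponents $q$ and $m$) extracts $\mathcal{A}_\ast(|f_k|^q)^{m/q}$ and leaves only $\int w(z_1,\dots,z_m)^m\,d\sigma_k^{\otimes m}$ to bound. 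This weight does blow up like $\mathrm{vol}(z_1,\dots,z_m)^{-(k-1)}$ near degenerate $\underline z$, but now one is integrating $w^m$ against the fixed product measure $d\sigma_k^{\otimes m}$ rather than against arbitrary concentrated $g$'s, and an elementary slicing argument shows this is finite precisely when $d\ge mk$. The order of the two H\"olders is what makes the difference.
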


We will prove Proposition \ref{finalprop} in Section \ref{Proof of final prop} below. As   in the discrete case, this proposition quickly implies Theorem \ref{Thm3'}, however in this setting we proceed by induction. 

Note that Corollary \ref{Cor3} constitutes the base $k=2$ case of Theorem \ref{Thm3'}.  We are thus required to first prove Theorem \ref{Thm3}. An application of H\"older to the  $k=2$ case of Proposition \ref{finalprop} gives
\[\|\mathcal{A}_\ast(f_1,f_2)\|_r\leq C_{d,m,\Delta}\|\mathcal{A}_\ast(|f_1|^{q})\|_{p_1/q}^{1/q}\|\mathcal{A}_\ast(|f_2|^{q})\|_{p_2/q}^{1/q}\]
whenever $1/r=1/p_1+1/p_2$. Now if
$p_1,p_2>q\,d/(d-1)$, then by \eqref{2.3'} we obtain 
\[\|\mathcal{A}_\ast(f_1,f_2)\|_r\leq C_{d,m,\Delta}\|f_1\|_{p_1}\|f_2\|_{p_2}\]
with $1/r=1/p_1+1/p_2<2/q\cdot (d-1)/d$. This establishes Theorem \ref{Thm3}.

We now let $k\geq3$ and assume that Theorem \ref{Thm3'} holds for $k-1$. An application of H\"older, as in the proof of Theorem \ref{Thm2}, gives
\[\|\mathcal{A}_\ast(f_1,\ldots,f_k)\|_r\leq C_{d,m,\Delta}\|\mathcal{A}_\ast(|f_1|^{q^{k-1}})\|_{p_1/q^{k-1}}^{1/q^{k-1}}\|\mathcal{A}_\ast(|f_2|^{q^{k-1}})\|_{p_2/q^{k-1}}^{1/q^{k-1}}\prod_{j=3}^k \|\mathcal{A}_\ast(|f_j|^{q^{k+1-j}})\|_{p_j/q^{k+1-j}}^{1/q^{k+1-j}}\]
whenever $1/r=1/p_1+\cdots+1/p_k$. Now if
\bee
p_1,p_2>q^{k-1}\frac{d}{d-1} \ \ \ \text{and} \ \ \ p_j>q^{k+1-j}\frac{d}{d-1} \ \ \text{for} \  \  3\leq j\leq k\eee
then by \eqref{2.3'} we obtain 
\[\|\mathcal{A}_\ast(f_1,\ldots,f_k)\|_r\leq C_{d,m,\Delta}\|f_1\|_{p_1}\cdots\|f_k\|_{p_k}\]
with $1/r=1/p_1+\cdots+1/p_k<(1/q+1/q^2+\cdots+1/q^{k-1}+1/q^{k-1})(d-1)/d$. 

Theorem \ref{Thm3'} now follows for all $(1/p_1,\dots,1/p_k)\in (d-1)/d\cdot\mathcal{C}_{k,q}$ by symmetry and interpolation, and for all such reciprocals in $(d-1)/d\cdot\widetilde{\mathcal{C}}_{k,q}$ by further interpolation with the estimates one obtains using observation (\ref{newtrivial}) and the inductive hypothesis.\qed

\section{Proof of Proposition \ref{finalprop}}\label{Proof of final prop}

Let $\Tr=\{v_0=0,v_1,\ldots,v_k\}\subseteq\R^d$ be a non-degenerate $k$-simplex and $T=T_\Tr= (t_{ij})_{1\leq i,j\leq k}$ with entries $t_{ij}:= v_i\cdot v_j$ denote the associated \emph{inner product matrix}. Recall that $T$ is a positive semi-definite matrix and that $T$ is positive definite if and only if $\Tr$ is non-degenerate. In fact one has that 
\eq\label{2.4'}
\det T = \text{vol}(v_1,\ldots,v_k)^2
\ee
where $\text{vol}(v_1,\ldots,v_k)$ denotes the volume of the parallelepiped spanned by the vectors $v_1,\ldots,v_k$.

It is easy to see that $\Tr'\simeq \la\Tr$, with $\Tr'=\{y_0=0,y_1,\ldots,y_k\}$, if and only if 
\eq\label{2.4''}
y_i\cdot y_j=\la^2 t_{ij}\quad \text{for all}\quad 1\leq i,j\leq k.
\ee
Thus the \emph{configuration space} of the isometric copies of the simplex $\Tr$ is the algebraic set
\bee
S_\Tr := \{(y_1,\ldots,y_k)\in\R^{dk}:\ y_i\cdot y_j- t_{ij}=0,\quad \textit{for}\quad 1\leq i,j\leq k\}.
\eee

Writing $\uy=(y_1,\ldots,y_k)$ and $f_{ij}(\uy)=y_i\cdot j-t_{ij}$ the set $S_\Tr$
is the common zero set of the family of polynomials $\FF=\{f_{ij}\}$ and is a smooth $d-k(k+1)/2$-dimensional sub-manifold of $\R^{dk}$, in dimensions $d\geq k$. Indeed, for any $\uy=(y_1,\ldots,y_k)\in S_\Tr$ the vectors $y_1,\ldots,y_k$ are linearly independent, and then it is easy to see that the gradient vectors $\nabla f_{ij}(\uy)$ are also linearly independent thus the algebraic set $S_\Tr$ has no singular points.

To any algebraic set $S$, defined as the zero set of a family of polynomials $\FF$, which has a non-singular point one may associate measure $\om_\FF$ supported on $S$, also referred to as the Gelfand-Leray measure, this is a natural analogue to the counting measure used in the discrete setting, see Birch \cite{Birch}. In Section \ref{appendix} we provide a self-contained discussion of properties of this measure that are needed for our results. 

In particular, for any family of functions $f_1,\dots,f_k:\R^d\to\R$ with $d\geq k+1$ and $\la>0$, our normalized multilinear averages
\bee
\mathcal{A}_{\lm}(f_1,\dots,f_{k})(x)= \int_{SO(d)} f_1(x+\lm \cdot U(v_1))\cdots f_k(x+\lm\cdot U(v_k))\,d\mu(U)\eee
satisfy
\be\label{newdefn}
\mathcal{A}_{\lm}(f_1,\dots,f_{k})(x)= C_{d,\Delta}\,\int_{y_1,\ldots,y_k} f_1(x+\la y_1)\ldots f_k(x+\la y_k)\,d\om_\FF (y_1,\ldots,y_k)
\ee
for some absolute constant $C_{d,\Delta}>0$. This crucial observation follows by writing 
\[
\mathcal{A}_{\lm}(f_1,\dots,f_{k})(x)= \int f_1(x+\lm y_1)\cdots f_k(x+\lm y_k)\,d\si^{d-1}(y_1)\,d\si^{d-2}_{y_1}(y_2)\ldots d\si^{d-k}_{y_1,\ldots,y_{k-1}}(y_k)
,\]
with $d\si^{d-j}_{y_1,\ldots,y_{j-1}}(y)$ being the normalized surface area measure on the sphere $S_{y_1,\ldots,y_{j-1}}$ given by the equations $|y-y_i|^2=t_{ij}$ for $0\leq i<j$, as a special case of (\ref{spheres}), see Section 10. 



In order to aid the exposition we first prove Proposition \ref{finalprop} in special case when $k=m=2$ below, delaying the proof of the general case until Section \ref{general}.

\subsection{Proof of Proposition \ref{finalprop}: Special case when $k=m=2$} 

Recall that our goal is to show that
\be\label{babygoal}
|\mathcal{A}_\ast (f_1,f_2)(x)| \leq C_{d,\Tr} \,\mathcal{A}_\ast(f_1^2)^{1/2}(x) \mathcal{A}_\ast(f_2^2)^{1/2}(x),
\ee
uniformly for $x\in\R^d$.
Towards this end we first note that
\be\label{fub}\int\int f_1(x+\la y_1) f_2(x+\la y_2)\,d\om_\FF(y_1,y_2)
= \int\int f_1(x+\la y_1) f_2(x+\la y_2)\,d\om_{\FF_{2,y_1}}(y_2)\,d\om_{\FF_1}(y_1)\ee
where $\FF=\{f_{11},f_{12},f_{22}\}$, $\FF_1=\{f_{11}\}$ and $\FF_{1,y_1}=\{f_{12},f_{22}\}$ considered as function of $y_2$ for fixed $y_1$. 

Indeed, since the function $f_{11}(y_1,y_2)=|y_1|^2-t_{11}$ depends only on the variable $y_1$ and the partition of variables $z=y_1$ and $y=y_2$ is admissible,  Fubini  \eqref{5.4} applies, see Section \ref{appendix}. Furthermore, we note that $\om_{\FF_1}=2^{-1}\,\si_1$, where $\si_1$ denotes the surface area measure on the sphere $|y_1|^2=t_{11}$.

Observation (\ref{fub}) allows one to apply Cauchy-Schwarz in the $y_1$ variable, which in light of (\ref{newdefn}) yields
\be\label{3.2}
\begin{aligned}
\mathcal{A}_\la(f_1,f_2)(x)^2 &\ls_{d,\Delta} \int_{y_1} f_1^2(x+\la y_1)\,d\si_1(y_1)\times \int_{y_1}\Bigl(\,\int_{y_2} f_2(x+\la y_2)\,d\om_{\FF_{2,y_1}}(y_2)\Bigr)^2\,d\si_1(y_1)\\
& \ls_{d,\Delta} \mathcal{A}_\ast (f_1^2)(x)\,\mathcal{B}_\la(f_2,f_2)(x)
\end{aligned}
\ee
where 
\eq\label{3.3}
\mathcal{B}_\la(f_2,f_2)(x) =  \int_{y_2,y_2'} f_2(x+\la y_2) f_2(x+\la y_2')\ w (y_2,y_2')\,d\si_2(y_2)\,d\si_2(y_2')
\ee
with $\si_2$ denoting the surface area measure on the sphere $\{y\in\R^d\,:\,|y|^2=t_{22}\}$ and weight function
\eq\label{3.5}
w (y_2,y_2') = \int_{y_1}\,d\om_{\FF_{y_2,y_2'}}(y_1)
\ee
where $\om_{\FF_{y_2,y_2'}}$ is the Gelfand-Leray measure defined by the system $\FF_{y_2,y_2'}=\{f_{11}(y_1),f_{12}(y_1,y_2),f_{12}(y_1,y_2')\}$. These facts, specifically the form of \eqref{3.5}, follows since the partition of variables $z=(y_2,y_2')$ and $y=y_1$ is admissible for the system \[\FF'=\{f_{11}(y_1),f_{12}(y_1,y_2),f_{12}(y_1,y_2'),f_{22}(y_2),f_{22}(y_2')\}.\] 

Applying Cauchy-Schwarz once more, we obtain 
\be\label{3.6}
\begin{aligned}
\mathcal{B}_\la(f_2,f_2)^2(x) &\leq \Bigl(\int_{y_2} f_2^2(x+\la y_2)\,d\si_2(y_2)\Bigr)^2\,\Bigl(\int_{y_2,y_2'} w (y_2,y_2')^2\,d\si_2(y_2)\,d\si_2(y_2')\Bigr)\\
& \leq\, \mathcal{A}_\ast(f_2^2)(x)^2\ \int_{y_2,y_2'} w(y_2,y_2')^2\,d\si_2(y_2)\,d\si_2(y_2').
\end{aligned}
\ee
Thus by \eqref{3.2} and \eqref{3.6}, to prove inequality \eqref{babygoal} it is enough to show that 
\eq\label{3.7}
\int_{y_2,y_2'} w(y_2,y_2')^2\,d\si_2(y_2)\,d\si_2 (y_2') <\,\infty.
\ee

The set $S_{\FF_{y_2,y_2'}}$ is the intersection of three spheres, whose surface are measure is either zero or $c_d r^{d-3}$, with $\,r:= \text{dist}(y_1,\text{span}\{y_2,y_2'\})$ being its radius, for any $y_1\in S_{\FF_{y_2,y_2'}}$. Note that 
\[r = \text{vol}(y_1,y_1-y_2,y_1-y_2')/ \text{vol}(y_2,y_2'),\]
thus by \eqref{2.4'}, \eqref{3.5} and \eqref{spheres} we have that 
\eq\label{3.8}
w(y_2,y_2')= c_d\,\text{vol}(y_1,y_1-y_2,y_1-y_2')^{d-4}\,\text{vol}(y_2,y_2')^{-d+3}
\leq c_{d,T}\,\text{vol}(y_2,y_2')^{-1}.
\ee

To show \eqref{3.7}, note that for given $y_2\in S_2 :=\{y\in \R^d: |y|^2=t_{22}\}$ and $j\in\N$ we have that 
\[\si_2(\{y_2'\in S_2:\ 2^{-j}\leq \text{vol}(y_2,y_2') < 2^{-j+1}\}) \leq c_{d,T}\,
2^{-(d-1)j}.\]
Thus, in dimensions $d\geq 4$, we have
\[
\pushQED{\qed} 
\int_{y_2,y_2'} w(y_2,y_2')^2\,d\si(y_2)\,d\si(y_2') \leq c_{d,T}\,
\sum_{j\geq 0} 2^{2j}\, 2^{-(d-1)j} < \infty.\qedhere \popQED\]

\subsection{Proof of Proposition \ref{finalprop}: General case}\label{general}

We will now show that inequality \eqref{babygoal} can be improved in high dimensions, specifically when $d\geq 2m$, by replacing the Cauchy-Schwarz inequality by H\"{o}lder's inequality applied with conjugate exponents $q=m/(m-1)$ and $m$ for any positive integer $m>2$. This results in increasing the variable $y_2$ $m$-fold as opposed to being doubled in the proof of the special case when $k=2$ and $m=2$ presented above. We will simultaneously also generalize this result to $k$-simplices.

Let $k,m\geq 2$ be integers and $\Tr=\{v_0=0,v_1,\ldots,v_k\}\subs\R^d$ be a non-degenerate $k$-simplex. Recall that our goal is to show that for any $f_1,\ldots,f_k:\R^d\to\R$, one has
\be\label{biggoal}
\mathcal{A}_\ast(f_1,\ldots,f_k)(x)\leq C_{d,m,\Delta}\,\mathcal{A}_\ast(|f_1|^q,\ldots,|f_{k-1}|^q)(x)^{1/q}\,\mathcal{A}_\ast (|f_k|^q)(x)^{1/q}
\ee
uniformly for $x\in\R^d$, where $q=m/(m-1)$.

Let $T=T_\Tr$ denote the inner product matrix of the simplex $\Tr=\{v_0=0,v_1,\ldots.v_k\}$. Consider the partition of variables $\uy=(\uy_1,y_k)$ with $\uy_1=(y_1,\ldots,y_{k-1})$ which is admissible for the system 
\[\F=\{f_{ij}(\uy):\ 1\leq i\leq j\leq k\}\] with $f_{ij}(\uy)=y_i\cdot y_j-t_{ij}.$ Indeed, the gradients of the family of functions $\F_1=\{f_{ij}(\uy_1),\ 1\leq i\leq j\leq k-1\}$ are linearly independent at any point $\uy_1\in S_{\F_1}$, moreover the gradients $\nabla_{y_k} f_{ik,\uy_1}$ of the system 
\[\F_{\uy_1}=\{ f_{ik,\uy_1}(y_k)=y_i\cdot y_k-t_{ik},\ 1\leq i\leq k\}\] are also linearly independent at any point $\uy\in S_\F$. Thus one may proceed in light of  (\ref{newdefn}), using H\"older's inequality in the $\uy_1$ variable, to obtain
\be
\begin{aligned}
\mathcal{A}_\la(f_1,\ldots,f_k)(x) &\ls_{d,\Delta} \Bigl(\int_{\uy_1} 
\bigl|f_1(x+\la y_1)\cdots f_{k-1}(x+\la y_{k-1})\bigr|^q\,d\om_{\F_1}(\uy_1)\Bigr)^{1/q}
\\ &\quad\quad\quad\quad\quad\quad\quad\quad\quad\quad\quad\quad\times \Bigl(\int_{\uy_1}\Bigl(\int_{y_k} |f_k(x+\la y_k)|\,d\om_{\FF_{\uy_1}}(y_k)\Bigr)^m\,d\om_{\F_1}(\uy_1)\Bigr)^{1/m}\\
& \ls_{d,\Delta} \mathcal{A}_\ast(|f_1|^q,\ldots,|f_{k-1}|^q)(x)^{1/q} \,\mathcal{B}_\la(|f_k|,\dots,|f_k|)(x)^{1/m}
\end{aligned}
\ee
where 
\be\label{4.2}
\begin{aligned}
\mathcal{B}_\la(|f_k|,\ldots,|f_k|)(x) &=  
\int_{\uy_1}\int_{z_1,\ldots,z_m}  |f_k(x+\la z_1)|\cdots |f_k(x+\la z_m)| 
\,d\om_{\FF_{\uy_1}}(z_1)\cdots \,d\om_{\FF_{\uy_1}}(z_m)\,d\om_{\F_1}(\uy_1)\\
&= \int_{z_1,\ldots,z_m} |f_k(x+\la z_1)|\cdots |f_k(x+\la z_m)|\ w (z_1,\ldots, z_m)\ d\si_k (z_1)\,\cdots\,d\si_k(z_m)
\end{aligned}
\ee
with $\si_k$ denoting the surface area measure on the sphere $\{z\in\R^d:\ |z|^2=t_{kk}\}$. In this more general setting the weight function now takes the form
\be\label{**}
w (z_1,\ldots,z_m) = \int_{\uy_1} \, d\om_{\FF_{z_1,\ldots,z_m}}(\uy_1)
\ee
where $\om_{\FF_{\uz}}$, with $\uz=(z_1,\ldots,z_m)$, is the Gelfand-Leray measure defined by the system 
\eq\label{4.4}
\FF_{\uz} = \{f_{ij,\uz}(\uy_1)=y_i\cdot y_j-t_{ij},\ 1\leq i\leq j\leq k-1,\ h_{il,\uz} (\uy_1) = y_i\cdot z_l-t_{ik},\ 1\leq i\leq k-1,\,1\leq l\leq m\}.\ee

This can be justified by showing that both integrals in (\ref{4.2}) are equal to the integral 
\eq\label{*}
\int |f_k(x+\la z_1)|\cdots |f_k(x+\la z_m)|\, d\om_{\widetilde{\FF}}(\uy_1,\uz),
\ee
with $\om_{\widetilde{\FF}}$ being the Gelfand-Leray measure of the system 
\eq\label{4.5}
\widetilde{\FF} =
\{f_{ij}(\uy_1)=y_i\cdot y_j-t_{ij},\ h_{il} (\uy_1,\uz) = y_i\cdot z_l-t_{ik},\ g_l(\uz)=z_l\cdot z_l-t_{kk}\}.\ee

To verify this,  we show that both partition of the variables $(\uy_1,\uz)$ and $(\uz,\uy_1)$ are admissible for the system \eqref{4.5}. Given $\uy_1 \in S_{\F_1}$, the vectors $z_1,\ldots,z_m$ have to be on the same $(d-k)$-dimensional sphere, which is the intersection of the $(d-k+1)$-dimensional affine subspace $M=\{z:\ z\cdot y_i=t_{ik},\,1\leq i<k\}$ and the sphere $S^{d-1}=\{z:\ |z|^2=t_{kk}\}$. If $d\geq k+m$ then one may choose $z_1,\ldots,z_m$ so that the vectors $y_1,\ldots,y_{k-1},z_1,\ldots,z_m$ are linearly independent. 
Then the gradient vectors $\nabla_z g_l(\uz)=2 z_l$ , as well as for given $1\leq i<k$ the gradient vectors $\nabla_{y_i} f_{ij} (\uy') = y_j$ and $\nabla_{y_i} h_{il}(\uy')=z_l$, are clearly linearly independent, showing that the partition of  coordinates $(u,z)$ is admissible, see Section \ref{appendix}. By a similar argument, which we omit, one shows that the partition of coordinates $(\uz,\uy_1)$ is admissible at any point of $S_{\widetilde{\FF}}$ where each vector $z_i$ is linearly independent of the vectors $y_1,\ldots,y_{k-1}$. 

Note that by removing an algebraic set of measure zero the integrals in \eqref{4.2} and \eqref{*} may be defined over points $(\uy_1,\uz)\in S_{\widetilde{\FF}}$ such that the vectors $y_1,\ldots,y_{k-1},z_1,\ldots,z_m$ are linearly independent. Thus in particular the second integral in \eqref{4.2} is restricted to linearly independent $m$-tuples $\uz=(z_1,\ldots,z_m)$.

Applying H\"{o}lder's inequality once more, we obtain 
\[
\mathcal{B}_\la(|f_k|,\ldots,|f_k|)(x)  \leq \mathcal{A}_\ast(|f_k|^q)(x)^{m/q}\ \left(\int_{z_1,\ldots,z_m} w (z_1,\ldots,z_m)^m\,d\si_k(z_1)\cdots d\si_k(z_m)\right)^{1/m}
\]
and hence (\ref{biggoal}) will follow if we can show that
\eq\label{4.7'}
\int_{z_1,\ldots,z_m} w(z_1,\ldots,z_m)^m\,d\si_k (z_1)\cdots d\si_k  (z_m) <\,\infty.\ee

Estimate (\ref{4.7'})  follows immediately from the following two lemmas.

\begin{lem}\label{geo} Let $k\geq 2,\, m\geq 2,\, d\geq m+k$ and let $z_1,\ldots,z_m\in \R^d$ be linearly independent vectors such that the algebraic set $S_{\F_{\uz}}$ has a non-singular point. Then one has
\eq\label{4.8''}
w(z_1,\ldots,z_m) \leq C_{d,m,T}\,\text{\em{vol}}(z_1,\ldots,z_m)^{-k+1}.
\ee
\end{lem}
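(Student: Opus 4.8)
The plan is to compute $w(z_1,\ldots,z_m)$ explicitly as a Gelfand--Leray integral and then bound it geometrically. Fix linearly independent vectors $z_1,\ldots,z_m\in\R^d$ such that $S_{\FF_{\uz}}$ has a non-singular point. Recall from \eqref{**} that
\[
w(z_1,\ldots,z_m)=\int_{\uy_1} d\om_{\FF_{\uz}}(\uy_1),
\]
where $\FF_{\uz}$ is the system \eqref{4.4}, i.e. the $\uy_1=(y_1,\ldots,y_{k-1})$ are required to satisfy $y_i\cdot y_j=t_{ij}$ for $1\le i\le j\le k-1$ together with the linear constraints $y_i\cdot z_l=t_{ik}$ for $1\le i\le k-1$, $1\le l\le m$. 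The key point is that these constraints decouple over the index $i$: for each fixed $i$, the vector $y_i$ ranges over (an affine slice of) a sphere, with the slice determined by the affine subspace $\{y:\ y\cdot z_l=t_{ik},\ 1\le l\le m\}$, which has codimension $m$ since $z_1,\ldots,z_m$ are linearly independent. So I would first record that the Gelfand--Leray measure of the full system $\FF_{\uz}$, integrated against the constant function $1$, factors (using the Fubini/coarea properties from Section~\ref{appendix}, specifically \eqref{spheres}) as a product of integrals, one for each $y_i$, over spheres of dimension $d-1-m-(i-1)$ — the extra $i-1$ coming from the requirement that $y_i$ also be orthogonal (in the appropriate affine sense) to $y_1,\ldots,y_{i-1}$ with prescribed inner products $t_{ij}$.

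Next I would extract the scaling in $\mathrm{vol}(z_1,\ldots,z_m)$. The sphere on which $y_i$ lives (after imposing $y_i\cdot z_l=t_{ik}$ and $y_i\cdot y_j=t_{ij}$ for $j<i$) is a sphere of some radius $r_i$ in a Euclidean space of dimension $d_i:=d-m-(i-1)$, and its surface measure scales like $c_{d_i}\,r_i^{d_i-1}$; by the same volume-ratio identity used in \eqref{3.8} (namely $r_i=\mathrm{vol}(\text{relevant Gram data})/\mathrm{vol}(z_1,\ldots,z_m,y_1,\ldots,y_{i-1})$, with the numerator bounded in terms of $T$ only), one gets $r_i\le C_{d,m,T}\,\mathrm{vol}(z_1,\ldots,z_m)^{-1}$ up to bounded factors, since the denominator contains the factor $\mathrm{vol}(z_1,\ldots,z_m)$. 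Multiplying over $i=1,\ldots,k-1$ and noting $r_i^{d_i-1}\le C\,r_i$ is wasteful; rather, the cleanest bookkeeping is to observe that each of the $k-1$ factors contributes at worst one negative power of $\mathrm{vol}(z_1,\ldots,z_m)$ (the higher powers only help, since $\mathrm{vol}(z_1,\ldots,z_m)$ may be small), while all other quantities are bounded in terms of $d,m,T$. This yields the claimed bound $w(z_1,\ldots,z_m)\le C_{d,m,T}\,\mathrm{vol}(z_1,\ldots,z_m)^{-k+1}$, with the hypothesis $d\ge m+k$ precisely ensuring that every sphere appearing has nonnegative dimension (so all the measures are genuine and the reductions of Section~\ref{appendix} apply).

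I expect the main obstacle to be the careful justification that the Gelfand--Leray measure $\om_{\FF_{\uz}}$ genuinely factors as the iterated product of these (affinely sliced) spherical measures, in the correct order $y_1$, then $y_2$, and so on, together with the identification of each factor's radius via a volume ratio. This is the analogue of the passage from \eqref{3.5} to \eqref{3.8} in the $k=m=2$ case, but now with $k-1$ vectors constrained both to a common $m$-codimensional affine subspace and to each other; one must check admissibility of the relevant partitions of coordinates (as was done around \eqref{4.5}) at each stage, and verify that the constant $c_d r^{d-3}$-type formula generalizes to $c_{d_i} r_i^{d_i-1}$ with the stated $r_i$. Once this structural fact is in hand, the estimate \eqref{4.8''} is essentially immediate from $\det(\lambda^2 T)$-type identities and the observation that small $\mathrm{vol}(z_1,\ldots,z_m)$ only improves the bound. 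I would present the structural reduction first, citing \eqref{spheres} and the Fubini statement \eqref{5.4} from the appendix, then close with the one-line volume computation.
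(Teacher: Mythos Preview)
Your overall architecture---factor $w(z_1,\ldots,z_m)=\int d\om_{\FF_{\uz}}$ via the Fubini rule \eqref{5.4}, identify each conditional factor as a Gelfand--Leray integral over a sphere, and read off the answer from \eqref{spheres}---is exactly the paper's approach. But there is a genuine gap in how you extract the $\text{vol}(z_1,\ldots,z_m)^{-(k-1)}$ factor.

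You attribute the negative powers of $\text{vol}(\uz)$ to an upper bound $r_i\le C_{d,m,T}\,\text{vol}(\uz)^{-1}$ on the radii, and then remark that ``higher powers only help, since $\text{vol}(\uz)$ may be small.'' Both steps are problematic. First, your justification for $r_i\le C/\text{vol}(\uz)$ assumes the extra factor $r_1\cdots r_{i-1}$ in the denominator $\text{vol}(z_1,\ldots,z_m,y_1,\ldots,y_{i-1})=\text{vol}(\uz)\,r_1\cdots r_{i-1}$ is bounded \emph{below}, which is not established. Second, and more seriously, the direction is backwards: since the bound \eqref{4.8''} must later be fed into Lemma~\ref{withs} with $s=m(k-1)$, every extra negative power of $\text{vol}(\uz)$ \emph{hurts}; when $\text{vol}(\uz)$ is small, $\text{vol}(\uz)^{-N}$ is a worse upper bound for larger $N$. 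So using $r_i\le C/\text{vol}(\uz)$ inside $r_i^{d-m-i}$ produces far too many negative powers and yields nothing usable.

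The actual source of the $(k-1)$ negative powers is not the radii but the Gelfand--Leray densities $c_T$ in \eqref{spheres}/\eqref{5.5}. At step $i$ the density is $c\cdot\text{vol}(y_i,y_{i-1},\ldots,y_1,z_1,\ldots,z_m)^{-1}=c\,(r_1\cdots r_i\,\text{vol}(\uz))^{-1}$, while the surface area contributes $r_i^{\,d-m-i}$. Multiplying over $i=1,\ldots,k-1$ and telescoping the powers of the $r_j$ gives the exact identity
\[
d\om_{\FF_{\uz}}(\uy_1)=c_{d,m,T}\,\text{vol}(\uz)^{-(k-1)}\prod_{i=1}^{k-1} r_i^{\,d-m-k}\; d\si^{d-m-1}(y_1)\cdots d\si^{d-m-k+1}(y_{k-1}),
\]
which is precisely formula \eqref{4.10} of the paper (proved there by induction on $k$). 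The radii are then handled by the trivial bound $r_i\le |y_i|=\sqrt{t_{ii}}\le C_T$, and $d\ge m+k$ guarantees the exponent $d-m-k$ is nonnegative. In short: track the densities, not the radii, and the telescoping does the work your handwaved ``one negative power per factor'' was meant to do.
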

\begin{lem}\label{withs} Let $s\geq 1, m\geq 1$ and $d\geq m+s$. Then one has 
\eq\label{4.11}
\int \text{\em{vol}}(z_1,\ldots,z_m)^{-s}\,d\si_k(z_1)\ldots d\si_k(z_m) < \infty
\ee
where the integration is taken over linearly independent $m$-tuples $\uz=(z_1,\ldots,z_m)$.
\end{lem}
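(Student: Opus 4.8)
\textbf{Proof proposal for Lemma \ref{withs}.} The plan is to reduce \eqref{4.11} to an elementary estimate on the size of spherical tubes and then to iterate that estimate via Fubini. Write $S=\{z\in\R^d:\ |z|^2=t_{kk}\}$ for the sphere carrying $\si_k$ and $\rho=t_{kk}^{1/2}$ for its radius. For linearly independent vectors $z_1,\ldots,z_m$ one has the Gram--Schmidt factorisation
\eq
\operatorname{vol}(z_1,\ldots,z_m)=\prod_{i=1}^m r_i,\qquad r_i:=\operatorname{dist}\bigl(z_i,\operatorname{span}\{z_1,\ldots,z_{i-1}\}\bigr),
\ee
where $r_1=|z_1|=\rho$ and, crucially, $r_i$ depends only on $z_1,\ldots,z_i$. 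Since the set of linearly dependent $m$-tuples has $\si_k\times\cdots\times\si_k$-measure zero in $S^m$ (its codimension there is $d-m+1\geq s+1\geq 2$), it suffices to bound the integral of $\prod_{i=1}^m r_i^{-s}$ taken over all of $S^m$.

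The only geometric input needed is the following tube estimate: for every linear subspace $V\subseteq\R^d$ of dimension $j$ with $j<d-s$,
\eq\label{tube}
\int_S \operatorname{dist}(z,V)^{-s}\,d\si_k(z)\ \leq\ C_{d,j,s}\,\rho^{\,d-1-s},
\ee
with a constant independent of $V$ itself (it depends only on $\dim V$, by the rotation-invariance of $\si_k$). To prove \eqref{tube} I would use the layer-cake formula together with the standard bound $\si_k(\{z\in S:\operatorname{dist}(z,V)<t\})\ls_{d,j}\min\bigl(\rho^{\,j-1}t^{\,d-j},\,\rho^{\,d-1}\bigr)$, the exponent $d-j$ being the codimension inside $S$ of the great subsphere $S\cap V$; splitting the resulting $t$-integral at $t=\rho$ produces the convergent piece $\rho^{\,j-1}\int_0^\rho t^{\,d-j-s-1}\,dt$ — finite precisely because $d-j-s>0$ — plus the trivially finite tail $\rho^{\,d-1}\int_\rho^\infty t^{-s-1}\,dt$, which yields \eqref{tube}.

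With \eqref{tube} available, the lemma follows by integrating the variables in the order $z_m,z_{m-1},\ldots,z_1$: at the $i$-th stage one meets $\int r_i^{-s}\,d\si_k(z_i)=\int\operatorname{dist}(z_i,V_i)^{-s}\,d\si_k(z_i)$ with $V_i:=\operatorname{span}\{z_1,\ldots,z_{i-1}\}$ a subspace of dimension at most $i-1\leq m-1$, and the hypothesis $d\geq m+s$ gives exactly $m-1<d-s$, so \eqref{tube} applies and bounds this inner integral by $C_{d,m,s}\,\rho^{\,d-1-s}$ uniformly in the remaining variables; the final $z_1$-integration contributes $\rho^{-s}\si_k(S)$, and multiplying these bounds shows the integral in \eqref{4.11} is at most $\rho^{-s}\si_k(S)\,(C_{d,m,s}\,\rho^{\,d-1-s})^{m-1}<\infty$. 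I do not anticipate a genuine obstacle here: the entire argument is Fubini, and the only point demanding care is the tube estimate \eqref{tube} — in particular, verifying that its range of validity $j<d-s$ stays open as $j$ runs up to $m-1$, which is guaranteed by the dimensional threshold $d\geq m+s$.
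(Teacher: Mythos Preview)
Your proof is correct and is essentially the paper's argument: the paper also factors $\operatorname{vol}(z_1,\ldots,z_m)=\operatorname{vol}(z_1,\ldots,z_{m-1})\cdot\operatorname{dist}(z_m,\operatorname{span}\{z_1,\ldots,z_{m-1}\})$, bounds the inner integral over $z_m$ via the dyadic form of your tube estimate $\si_k(\{2^{-j}\le \operatorname{dist}(z_m,V)<2^{-j+1}\})\ls 2^{-(d-m+1)j}$, and then inducts on $m$. Your layer-cake derivation of \eqref{tube} and the unrolled Fubini iteration are just a repackaging of that same induction.
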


Indeed (\ref{4.7'}) follows immediately from \eqref{4.8''} by taking $s=m(k-1)$ in \eqref{4.11}. \qed

\begin{proof}[Proof of Lemma \ref{withs}] For $m=1$ this is immediate. For $m\geq2$, given $z_1,\ldots,z_{m-1}$, note that 
\[
\si (\{z_m\in S^{d-1}:\ 2^{-j} \leq \text{dist}(z_m, \text{span}(z_1,\ldots,z_{m-1}) < 2^{-j+1}\}) \leq C_{d,m}\, 2^{-(d-m+1)j}.
\]
Thus
\bee
\begin{aligned}
\int \text{vol}(z_1,\ldots,z_m)^{-s}\,&d\si_k(z_1)\cdots d\si_k(z_m) \\
&\leq C_{d,m}\,
\sum_{j\geq 0}  2^{-(d-m-s+1)j}\, \int \text{vol}(z_1,\ldots,z_{m-1})^{-s}\,d\si_k(z_1)\cdots d\si_k(z_{m-1}).
\end{aligned}
\eee
Estimate \eqref{4.11} follows by induction on $m$.
\end{proof}

\begin{proof}[Proof of Lemma \ref{geo}] Recall that integration in \eqref{**} is restricted to non-singular points $\uy_1=(y_1,\ldots,y_{k-1}) \in S_{\F_{\uz}}=\{\F_{\uz}=0\}$ such that the vectors $z_1,\ldots,z_m,y_1,\ldots,y_{k-1}$ are linearly independent. For given $1\leq i<k$, let 
\[r_i(\uy_1):= \text{dist}(y_i,\,\text{span}(z_1,\ldots,z_m,y_1,\ldots,y_{i-1}))\]
denote the distance from the point $y_i$ to the subspace spanned by the vectors $y_1,\ldots,y_{i-1},z_1,\ldots,z_m$. Note that $r_i(\uy_1)>0$, in fact
\eq\label{4.9''}
r_i(\uy_1) = \text{vol}(y_i,y_{i-1},\ldots,y_1,z_1,\ldots,z_m)/\text{vol}(y_{i-1},
\ldots,y_1,z_1,\ldots,z_m).
\ee

First we show by induction on $k$ that 
\eq\label{4.10}
d\om_{\F_{\uz}}(\uy_1) = c_{d,m,T}\,\prod_{i=1}^{k-1} r_i (\uy_1)^{d-m-k}\, 
\text{vol}(z_1,\ldots,z_m)^{-k+1}\, d\si^{d-m-1}(y_1)\,d\si^{d-m-2}(y_2)\cdots d\si^{d-m-k}(y_{k-1})
\ee
where $d\si^j$ denoting the normalized surface area measure on the $j$-dimensional unit sphere in $\R^d$.

Note that in order to verify  identity \eqref{4.10} when $k=2$ we must show that
\[d\om_{\F_{\uz}}(y_1) = c_{d,m,T}\ r_1(y_1)^{d-m-2}\, \text{vol}(z_1,\ldots,z_m)^{-1}\  d\si^{d-m-1}(y_1).
\]
The algebraic set $S_{\F_{\uz}}$ is the intersection of $m+1$ spheres centered at the points $0,z_1,\ldots,z_m$ and hence is a $(d-m-1)$-dimensional sphere of radius $r_1(y_1)$ given by \eqref{4.9''}. By \eqref{5.5} we have that 
\[
d\om_{\F_{\uz}}(y_1) = c_{d,m,T}\, \text{vol}(y_1,y_1-z_1,\ldots,y_1-z_m)^{-1} r_1(y)^{d-m-1}\ d\si^{d-m-1}(y_1)
\]
and \eqref{4.10} follows from \eqref{4.9''} and the fact that $\text{vol}(y_1,y_1-z_1,\ldots,y_1-z_m) = \text{vol}(y_1,z_1,\ldots,z_m)$.

\smallskip

For the induction step, suppose \eqref{4.10} holds for $k$ and let $\uy=(y_1,\ldots,y_k)\in S_{\F_{\uz}}$ be point such that the vectors $y_1,\ldots,y_k,z_1,\ldots,z_m$ are linearly independent. Then the partition of variables $\uy=(\uy_1,y_k)$ with $\uy_1=(y_1,\ldots,y_{k-1})$
is admissible, thus one has
\[ 
d\om_{\F_{\uz}}(\uy) = \,d\om_{\F_{\uz}}(\uy_1)\,d\om_{\F_{\uz,\uy_1}}(y_k).
\]
The set $S_{\F_{\uz,\uy_1}}$ is a $(d-m-k)$-dimensional sphere with radius $r_k(\uy)$ given by \eqref{4.9''}, thus by \eqref{5.5} one has
\[
d\om_{\F_{\uz,\uy_1}}(y_k) = c_{d,m,T}\, \text{vol}(y_k,y_{k-1},\ldots,y_1,z_1,\ldots,z_m)^{-1}\, r_k(\uy)^{d-m-k}\,d\si^{d-m-k}(y_k).
\]
By repeated applications of \eqref{4.9} one has
\[
\text{vol}(y_k,y_{k-1},\ldots,y_1,z_1,\ldots,z_m) = \text{vol}(z_1,\ldots,z_m)\ \prod_{i=1}^k r_i(\uy).
\]
Thus by the induction hypotheses we have that 
\[
d\om_{\F_{\uz}}(\uy) = c_{d,m,T}\ 
\prod_{i=1}^{k-1} r_i (\uy_1)^{d-m-k-1}\,r_k(\uy)^{d-m-k-1}
\text{vol}(z_1,\ldots,z_m)^{-k}
\ d\si^{d-m-1}(y_1)\cdots d\si^{d-m-k}(y_k).
\]
which is \eqref{4.10} for $\uy=(y_1,\ldots,y_k)$. Inequality \eqref{4.8''} follows immediately in dimensions $d\geq m+k$ as $r_i(\uy_1)\leq |y_i| \leq C_T$ for all $1\leq i<k$.
\end{proof}


\section{Measures on real algebraic sets}\label{appendix}

Let $\F=\{f_1,\ldots,f_n\}$ be a family of polynomials $f_i:\R^d\to\R$. We will describe certain measures supported on the algebraic set 
\eq\label{5.1} S_{\F}:= \{x\in\R^d:\ f_1(x)=\ldots=f_n(x)=0\}.\ee

A point $x\in S_\F$ is called \emph{non-singular} if the gradient vectors $\nabla f_1(x),\ldots,\nabla f_n(x)$ are linearly independent, and let $S_\F^0$ denote the set of non-singular points. It is well-known  and is easy to see, that if $S_\F^0\neq\emptyset$ then it is a relative open, dense subset of $S_\F$, and moreover it is an $d-n$-dimensional sub-manifold of $\R^d$. If $x\in S_\F^0$ then there exists a set of coordinates, $J=\{j_1,\ldots,j_n\}$, with $1\leq j_1<\ldots<j_n\leq d$, such that 
\eq\label{5.2} j_{\F,J}(x):= \det\, \left( \frac{\partial f_i}{\partial x_j} (x)\right)_{1\leq i\leq n,j\in J}\ \neq 0.\ee

Accordingly, we will call a set of coordinates $J$ \emph{admissible}, if \eqref{5.2} holds for at least one point $x\in S_\F^0$, and will denote by $S_{\F,J}$ the set of such points. For a given set of coordinates $x_J$ let $\nabla_{x_J}f(x):=(\partial_{x_j}f(x))_{j\in J}$, and note that $J$ is admissible if and only if the gradient vectors $\nabla_{x_J}f_1(x),\ldots,\nabla_{x_J}f_n(x)$,  are linearly independent at at least one point $x\in S_\F$. It is clear that $S_{\F,J}$ is a relative open and dense subset of $S_\F$ and is a also $(d-n)$-dimensional sub-manifold, moreover 
\[S_\F^0 = \bigcup_{J\, \text{admissible}} S_{\F,J}.\]

We define a measure, near a point $x_0\in S_{\F,J}$ as follows. For simplicity of notation assume that $J=\{1,\ldots,n\}$ and let $\Phi(x):=(f_1,\ldots,f_n,x_{n+1},\ldots,x_d)$. Then $\Phi:U\to V$ is a diffeomorphism on some open set $x_0\in U\subs \R^d$ to its image $V=\Phi(U)$, moreover $S_\F=\Phi^{-1}(V\cap \R^{d-n})$. Indeed, $x\in S_\F\cap U$ if and only if $\Phi(x)=(0,\dots,0,x_{n+1},\ldots, x_d)\in V$. Let $I=\{n+1,\ldots,d\}$ and write $x_I:=(x_{n+1},\ldots,x_d)$. Let $\Psi(x_I)=\Phi^{-1}(0,x_I)$ and in local coordinates $x_I$ define the measure $\omega_{\F}$ via
\eq\label{5.3}\int g\,d\omega_{\F} := \int g(\Psi(x_I))\ |Jac^{-1}_\Phi (\Psi(x_I))|\,dx_I
\ee
for a continuous function $g$ supported on $U$. Note that $Jac_\Phi(x)=j_{\F,J}(x)$, i.e. the Jacobian of the mapping $\Phi$ at $x\in U$ is equal to the expression given in \eqref{5.2}, and that the measure $d\omega_\F$ is supported on $S_\F$. It is not hard to show that this measure is independent of the choice local coordinates $x_I$ and then $\omega_\F$ is defined on $S_\F^0$ as the set $S_\F^0\backslash S_{\F,J}$ is of measure zero with respect to $\omega_F$, being a proper analytic subset on $\R^{d-n}$ in any other admissible local coordinates.


A more geometric description of the measure $d\om_\F$ can be given as follows. Let $x_0\in S_\F^0$, and choose an orthonormal basis $e_1,\ldots,e_d$ of $\R^d$ such that $\text{span}\{e_1,\ldots,e_n\}=\text{span}\{\nabla f_1(x_0),\ldots,\nabla f_n (x_0)\}$ and $T_{x_0}S_\F=\text{span}\{e_{n+1},\ldots,e_d\}$. This possible as $\nabla f_i(x_0)$ is orthogonal to the tangent space of $S_\F$ at $x_0$. Let $x_1,\ldots,x_d$ be the system of coordinates associated to the basis. Then the tangent of the mapping $\Phi(x)$ at $x_0$ is a block diagonal matrix and its Jacobian $Jac_\Phi(x_0)$ is the determinant of the $n\times n$ matrix whose rows are the gradient vectors $\nabla f_i(x_0)$. Its magnitude further equals to the volume of the parallelepiped spanned by these vectors. On the other hand, the surface area measure $d\si_{S_\F}$ is equal to $dx_{n+1}\ldots dx_d$ at $x_0$ in this coordinate system, thus we have
\eq\label{47}
d\om_\F (x_0) = \text{vol}(\nabla f_1(x_0),\ldots,\nabla f_i(x_0))^{-1}\, d\si_{S_\F}(x_0).
\ee
This provides a coordinate free description of the Gelfand-Leray measure and shows that it is supported on $S_\F^0$ and is absolute continuous with respect to the surface area measure with density $\text{vol}(\nabla f_1(x_0),\ldots,\nabla f_i(x_0))^{-1}$.\\


Let $x=(z,y)$ be a partition of coordinates in $\R^d$, with $y=x_{J_2}$, $z=X_{J_1}$, and assume that for $i=1,\ldots,m$ the functions $f_i$ depend only on the $z$-variables. We say that the partition of coordinates is \emph{admissible}, if there is a point $x=(z,y)\in S_\F$ such that both the gradient vectors $\nabla_z f_1(x),\ldots,\nabla_z f_m(x)$ and the vectors $\nabla_y f_{m+1}(x),\ldots,\nabla_y f_n(x)$ for a linearly independent system. Partition the system $\F=\F_1\cup\F_2$ with $\F_1=\{f_1,\ldots,f_m\}$ and $\F_2=\{f_{m+1},\ldots,f_n\}$. Then there is set $J_1'\subs J_1$ for which 
\[j_{\F_1,J_1'}(z):= \det \left(\frac{\partial f_i}{\partial x_j}(z)\right )_{1\leq i\leq m, \,j\in J_1'}\neq 0,\]
and also a set $J_2'\subs J_2$ such that 
\[j_{\F_2,J_2'}(z,y):= \det \left(\frac{\partial f_i}{\partial x_j}(z,y)\right)_{m+1\leq i\leq n,\, j\in J_2'}\neq 0.\]
Since $\nabla_y f_i\equiv 0$ for $1\leq i\leq m$, it follows that the set of coordinates $J'=J_1'\cup J_2'$ is admissible, moreover
\[j_{\F,J'}(y,z) = j_{\F_1,J_1'}(z)\,j_{\F_2,J_2'}(y,z).\]

For fixed $z$, let $f_{i,z}(y):=f_i(z,y)$ and let $\F_{2,z}=\{f_{m+1,z},\ldots,f_{n,z}\}$. Then clearly $j_{\F_2,J_2'}(y,z)=j_{\F_{2,z},J_2'}(y)$ as it only involves partial derivatives with respect to the $y$-variables. Thus we have an analogue of Fubini's theorem, namely
\eq\label{5.4}
\int g(x)\,d\omega_\F (x) \,=\, \int\int g(z,y)\,d\omega_{\F_{2,z}}(y)\, d\omega_{\F_1}(z).\ee


Consider now algebraic sets given as the intersection of spheres. Let $x_1,\ldots,x_m\in\R^d$, $t_1,\ldots,t_m>0$ and $\F=\{f_1,\ldots,f_m\}$ where $f_i(x)=|x-x_i|^2-t_i$ for $i=1,\ldots,m$. Then $S_\F$ is the intersection of spheres centered at the points $x_i$ of radius $r_i=t_i^{1/2}$. If the set of points $X=\{x_1,\ldots,x_m\}$ is in general position (i.e they span an $m-1$-dimensional affine subspace), then a point $x\in S_\F$ is non-singular if
$x\notin \text{span}\{X\}$, i.e if $x$ cannot be written as linear combination of $x_1,\ldots,x_m$. Indeed, since $\nabla f_i(x)=2(x-x_i)$ we have that
\[\sum_{i=1}^m a_i\nabla f_i(x)=0\ \Longleftrightarrow\ \sum_{i=1}^m a_i\,x = \sum_{i=1}^m a_i x_i,\]
which implies $\sum_{i=1}^m a_i=0$ and $\sum_{i=1}^m a_i x_i=0$. By replacing the equations $|x-x_i|^2=t_i$ with $|x-x_1|^2-|x-x_i|^2=t_1-t_i$, which is of the form $x\cdot (x_1-x_i)=c_i$, for $i=2,\ldots,m$, it follows that $S_\F$ is the intersection of sphere with an $n-1$-codimensional affine subspace $Y$, perpendicular to the affine subspace spanned by the points $x_i$. Thus $S_\F$ is an $m$-codimensional sphere of $\R^d$ if $S_\F$ has one point $x\notin \text{span}\{x_1,\ldots,x_m\}$ and all of its points are non-singular. Let $x'$ be the orthogonal projection of $x$ to $\text{span}\{X\}$. If $y\in Y$ is a point with $|y-x'|=|x-x'|$ then by the Pythagorean theorem we have that $|y-x_i|=|x-x_i|$ and hence $y\in S_\F$. It follows that $S_\F$ is a sphere centered at $x'$ and contained in $Y$.

Let $T=T_X$ be the inner product matrix with entries $t_{ij}:=(x-x_i)\cdot (x-x_j)$ for $x\in S_\F$. Since $(x-x_i)\cdot (x-x_j)=1/2( t_i+t_j - |x_i-x_j|^2)$ the matrix $T$ is independent of $x$. We will show that $d\om_\F=c_T\, d\si_{S_\F}$ where $d\si_{S_\F}$ denotes the surface area measure on the sphere $S_\F$ and $c_T= 2^{-m} \det (T)^{-1/2}>0$, i.e for a function $g\in C_0(\R^d)$,
\be\label{spheres}
\int_{S_\F} g(x)\,d\om_\F(x) = c_T\int_{S_\F} g(x)\,d\si_{S_\F}(x).
\ee
Indeed by \eqref{47}, 
\eq\label{5.5}
\int_{S_\F} g(x)\,d\om_\F(x) = 2^{-m} \,\text{vol}(x-x_1,\ldots,x-x_m)^{-1}\,
\int_{S_\F} g(x)\,d\si_{S_\F}(x),
\ee
and it is a well-known fact from linear algebra that 
\[\text{vol} (x-x_1,\ldots,x-x_m)^2 = \det(T).\] 





\begin{thebibliography}{19}


\bibitem{AKP}
{\sc T. C. Anderson, A. V. Kumchev, and E. A. Palsson}, {\em Discrete maximal operators over surfaces of higher codimension}, arXiv:2006.09968

\bibitem{Birch}
{\sc
B. Birch}, {\em Forms in many variables}, Proc. Royal Soc. London. Ser. A. 265.1321 (1962): 245-263.

\bibitem{B}
{\sc J. Bourgain}, {\em Averages over convex curves and maximal operators}, J. Analyse Math. 47 (1986), 69-85.


\bibitem{B1}
{\sc J. Bourgain}, {\em On the maximal ergodic theorem for certain subsets of the integers}, Israel J. Math. 61
(1988), no. 1, 39-72.

\bibitem{B2}
{\sc J. Bourgain}, {\em On the pointwise ergodic theorem on $L^p$ for arithmetic sets}, Israel J. Math. 61 (1988), 73-84.

\bibitem{B3}
{\sc J. Bourgain}, {\em Eigenfunction bounds for the Laplacian on the $n$-torus},
Internat. Math. Res. Notices (1993), no. 3, 61-66.

\bibitem{HLM}
{\sc L. Huckaba, N. Lyall and \'A. Magyar}, {\em Simplices and sets of positive upper density in $\R^d$}, Proc. Amer. Math. Soc. 145 (2017), no. 6, 2335-2347

\bibitem{K}
{\sc Y.Kitaoka}, {\em Siegel modular forms and representation by quadratic forms} Lectures on Mathe-
matics and Physics, Tata Institute of Fundamental Research, Springer-Verlag, (1986).

\bibitem{M}
{\sc \'A. Magyar},
{\em k-point configurations in sets of positive density of $\Z^n$}, Duke Math. J., v 146/1, (2009) pp. 1-34.

\bibitem{MSW}
{\sc \'A. Magyar, E. M. Stein, and S. Wainger}, {\em Discrete analogues in harmonic analysis: Spherical averages},
Ann. Math. (2) 155 (2002), no. 1, 189-208.

\bibitem{PS}
{\sc E. A. Palsson, S. R. Sovine}, {\em The triangle operator}, (2019), arXiv:1910.01282.

\bibitem{R}
{\sc S. Raghavan}, {\em Modular forms of degree n and representation by quadratic forms}, Ann. Math. (2) 70 (1959), no. 3, 446-477.

\bibitem{S}
{\sc C. L. Siegel}, {\em On the theory of indefinite quadratic forms}, Ann. of Math. (2) 45 (1944), 577-622. 

\bibitem{St}
{\sc E. M. Stein}, {\em Maximal functions I: Spherical means}, Proc. Nat. Acad. Sci. 73 (1976), 2174-2175. 



\bibitem{V}
{\sc R. C. Vaughan}, {\em The Hardy-Littlewood Method}, Second ed., Cambridge University Press, Cambridge,
1997.

\end{thebibliography}
\end{document}